\definecolor{red}{rgb}{1.0,0.0,0.0}
\definecolor{blu}{rgb}{0.0,0.0,1.0}
\definecolor{gre}{rgb}{0.03,0.50,0.03}
\newtheorem{theorem}{Theorem}[section]
\newtheorem{proposition}[theorem]{Proposition}
\newtheorem{lemma}[theorem]{Lemma}
\newtheorem{cor}[theorem]{Corollary}
\theoremstyle{definition}
\newtheorem{definition}[theorem]{Definition}
\theoremstyle{definition}
\newtheorem{remark}[theorem]{Remark}
\numberwithin{equation}{section}
\newcommand{\bfone}{{\mathbf{1}}}
\newcommand{\eps}{{\varepsilon}}
\newcommand{\abs}[1]{\left|{#1}\right|}
\newcommand{\norm}[1]{\lVert{#1}\rVert}
\newcommand{\N}{{\mathbb{N}}}
\newcommand{\R}{{\mathbb{R}}}
\begin{document}
\title[Viability and l.s.c.~solutions]{Viability for locally monotone evolution inclusions and lower semicontinuous
solutions of\\ Hamilton--Jacobi--Bellman equations in\\ infinite dimensions}

\author{Jichao Jiang}
\author{Christian Keller}
\address
{Department of Mathematics, 
University of  Central Florida,
Orlando, FL 32816, United States}
\thanks{This research  was
 supported in part  by NSF-grant DMS-2106077.}
\email{jichao.jiang@ucf.edu}
\email{christian.keller@ucf.edu}

\date{November 26, 2024}

\subjclass[2010]{34G25, 47H05,  47J35, 49L25}
\keywords{Evolution inclusions; viability; path-dependent partial differential equations;
contingent solutions; viscosity solutions; optimal control}

\begin{abstract}
We establish necessary and sufficient conditions for viability of evolution inclusions with locally monotone operators
in the sense of  Liu and R\"ockner [\textit{J.~Funct.~Anal.}, 259 (2010), pp.~2902-2922].
This allows us to prove wellposedness of lower semicontinuous solutions of Hamilton--Jacobi--Bellman equations
associated to the optimal control of evolution inclusions.
Thereby, we generalize results in  Bayraktar and Keller [\textit{J.~Funct.~Anal.}, 275 (2018), pp.~2096-2161] 
on Hamilton--Jacobi equations in infinite dimensions with monotone operators
in several ways. First, we permit locally monotone operators. This extends the applicability of our theory to a wider class of equations
such as Burgers' equations, reaction-diffusion equations, and 2D Navier--Stokes equations.
Second,  {\color{black} our results apply to optimal control problems
with state constraints.} Third, we have uniqueness of viscosity solutions.
Our results on viability and lower semicontinuous solutions are new even in the case of monotone operators.
\end{abstract}
\maketitle
\pagestyle{plain}
\tableofcontents

\section{Introduction}
We study viability problems involving evolution inclusions of the form
\begin{align}\label{E:1st}
x^\prime(t)+A(t,x(t))\in F(t,x(t))\quad\text{a.e.~on $(0,T)$,}
\end{align}
where $A$ is a locally monotone operator on a Gelfand triple $V\subset H\subset V^\ast$.
Evolution equations with locally monotone operators were introduced in \cite{Liu11Nonlin,LiuRoeckner10JFA}.
They generalize  equations with 
monotone operators. Many important equations such as Navier--Stokes equations, reaction-diffusion
equations, Burgers' equations, etc.~can be written as abstract evolution
equations involving locally monotone operators.

The main results of this work are necessary and sufficient criteria for some set $K$ to be \emph{viable} for \eqref{E:1st},
i.e., criteria that a solution $x$ of $\eqref{E:1st}$ satisfies $x(t)\in K$ for all $t\in [0,T]$ provided it starts in $K$.
Previous works on viability for evolution inclusions with monotone operators on Gelfand triples such as 
\cite{AP89Yokohama,HP2,P92Yokohama,Shi88Nonlin}
seem to only provide sufficient criteria.
Note that there are results with  strength  similar than ours (necessary and sufficient conditions for viability)
for evolution inclusions with operators that generate semigroups (see, e.g., \cite{Carja07book,Carja09TAMS}
and the references therein).

Thanks to our  mentioned main results on viability for \eqref{E:1st}, we obtain applications
for Hamilton--Jacobi equations in infinite dimensions. More precisely, we
prove existence and uniqueness for appropriate nonsmooth solutions of the following
path-dependent Hamilton--Jacobi--Bellman equation:
\begin{equation}\label{E:2nd}
\begin{split}
&\partial_t u(t,x)+\langle A(t,x(t)),\partial_x u(t,x)\rangle 
+\inf_{f\in F(t,x(t))} (f,\partial_x u(t,x))=0,\\ &\qquad\qquad (t,x)\in[0,T)\times C([0,T],H).
\end{split}
\end{equation}
This is a generalization of results in \cite{BK18JFA}, where the operator $A$ was required to be
monotone. Moreover, our solutions of \eqref{E:2nd} need only be lower semicontinuous.
This allows us to  characterize value functions for optimal control problems with  state constraints
related to \eqref{E:1st}  
as unique solutions to \eqref{E:2nd} in a similar way as it has been done in the finite-dimensional case
in \cite{Frankowska93SICON}.

\section{Setting}
Let $(V,H,V^\ast)$ be a Gelfand triple, i.e., we have $V\subset H\subset V^\ast$, where 
$V$ is a reflexive and separable Banach space
that is continuously and densily embedded into $H$, which is a Hilbert space, and $V^\ast$ is the dual space
of $V$.
Also assume that the mentioned embedding is compact.
We write  $\norm{\cdot}:=\norm{\cdot}_V$,
$\abs{\cdot}:=\norm{\cdot}_H$, and $\norm{\cdot}_\ast:=\norm{\cdot}_{V^\ast}$ for the corresponding norms. 
We also use $\abs{\cdot}$ for norms on Euclidean spaces.
We write $(\cdot,\cdot)$ for the inner product in $H$ and $\langle \cdot,\cdot\rangle$ for the duality pairing
between $V$ and $V^\ast$. Moreover, assume that $(h,v)=\langle h,v\rangle$ for each $h\in H$ and $v\in V$.
For further details, see section~2 in \cite{BK18JFA},
where the same setting is used. 

\subsection{Notation}

Balls in $H$ centered at the origin are denoted by
\begin{align*}
B(0,r):=\{x\in H:\abs{x}\le r\},\,r>0.
\end{align*}

Given two subsets $E_1$, $E_2$ of  a group, put
\begin{align*}
E_1+E_2:=\{e_1+e_2:\, \text{$e_1\in E_1$ and $e_2\in E_2$}\}.
\end{align*}

Given a function $f$ from some set $E$ to $\R\cup\{+\infty\}$, put
\begin{align*}
\mathrm{dom}\,f&:=\{x\in E:\,f(x)<+\infty\}\qquad\text{(\emph{effective domain of $f$})},\\
\mathrm{epi}\,f&:=\{(x,y)\in E\times\R:\, y\ge f(x)\}\qquad\text{(\emph{epigraph of $f$})}.
\end{align*}

We  borrow the  following notation from \cite{Carja12SICON}:
\begin{align*}
E_{L^2}:=\{f\in L^2(0,T;H):\,f(t)\in E\text{ a.e.~on $(0,T)$}\},\quad E\subset H,
\end{align*}

We write l.s.c.~for lower semicontinuous and u.s.c.~for upper semicontinuous.

\subsection{Path spaces}
We frequently use the space $C([0,T],H)$, which
we consider to be equipped with the supremum norm $\norm{\cdot}_\infty$.
Subsets of $[0,T]\times C([0,T],H)$ are considered to be equipped with the
pseudo-metric $\mathbf{d}_\infty$ defined 
by 
\begin{align*}
\mathbf{d}_\infty(t_1,x_1;t_2,x_2):=\abs{t_2-t_1}+\norm{x_1(\cdot\wedge t_1)-x_2(\cdot\wedge t_2)}_\infty.
\end{align*}
Here, $\wedge$ means minimum, i.e., $a\wedge b:=\min\{a,b\}$. Continuity and semicontinuity of
functions defined on subsets of $[0,T]\times C([0,T],H)$ are always to be understood with respect to $\mathbf{d}_\infty$.

\begin{remark}
A semicontinuous function $u:[0,T]\times C([0,T])\supset E\to\R\cup\{+\infty\}$ is \emph{nonanticipating},
i.e., whenever $x_1=x_2$ on $[0,t]$, then $u(t,x_1)=u(t,x_2)$. This follows immediately from
the definition of $\mathbf{d}_\infty$.
\end{remark}

\subsection{Standing assumptions}
Fix $T>0$ and 
$p\ge 2$.
Let $q$ be defined by $\frac{1}{p}+\frac{1}{q}=1$.

Fix  an operator $A:[0,T]\times V\to V^\ast$ and a function $f^A\in L^1(0,T;\R_+)$. 
The following standing hypotheses (cf.~\cite{Liu11Nonlin,LiuRoeckner10JFA}) are always in force:

\textbf{H}($A$): (i) For every $x$, $v\in V$, the map $t\mapsto \langle A(t,x),v\rangle$ is measurable.

(ii) Local monotonicity: There is a constant $c_0\in\R$ and there 
are  locally bounded functions $\rho$ and $\eta$ from $V$ to $\R_+$ such that,
for each $x$, $y\in V$, we have
\begin{align*}
\langle A(t,x)-A(t,y),x-y\rangle\ge -(c_0+\rho(x)+\eta(y)) \abs{x-y}^2\qquad\text{a.e.~on $(0,T)$.}
\end{align*}
Moreover,
\begin{align}\label{E:Ac5}
\exists \beta>0:\,\forall x\in V:\,\rho(x)+\eta(x)\le \abs{c_0}(1+\norm{x}^p)(1+\abs{x}^\beta).
\end{align}

(iii) Hemicontinuity: For every $x$, $y$,  $v\in V$ and a.e.~$t\in (0,T)$, the map $s\mapsto \langle A(t,x+sy), v\rangle$,
$[0,1]\to\R$, is continuous.

(iv) Growth: There are constants   $c_1$, $\alpha\in\R_+$ such that, for all $x\in V$ and a.a~$t\in (0,T)$, we have
\begin{align*}
\norm{A(t,x)}_{\ast}\le \left(f^A(t)^{1/q}+c_1\norm{x}^{p-1}\right) (1+\abs{x}^\alpha)
\end{align*}

(v) Coercivity: There are constants  $c_2>0$ and $c_3\in\R_+$  such that, for all $x\in V$ and a.a~$t\in (0,T)$, we have
\begin{align*}
\langle A(t,x),x\rangle\ge c_2\norm{x}^p-c_3\abs{x}^2-f^A(t).
\end{align*}

\begin{remark}
Note that \cite{Liu11Nonlin} has different signs in the monotonicity hypothesis \textbf{H}($A$)~(ii)
and the coercivity hypothesis \textbf{H}($A$)~(iv).
The reason is that in \cite{Liu11Nonlin} equations
of the form $x^\prime(t)=A(t,x(t))+f(t)$ are considered,
whereas
 we deal with equations of the form $x^\prime(t)+A(t,x(t))=f(t)$. 
 \end{remark}

Also, fix a multifunction $F:[0,T]\times H \rightsquigarrow H$ with non-empty, convex, and closed values.
The following  hypotheses  
 are always in force.

\textbf{H}($F$): (i) {\color{black} $F$ is u.s.c.~in the sense of Definition~2.6.2 of \cite{Carja07book}, i.e.,
for each $(t,x)\in [0,T]\times H$ and for each open neighborhood $O$ of $F(t,x)$,
there is an open neighborhood $U$ of $(t,x)$ such that $(s,y)\in U$ implies
$F(s,y)\subset O$.}

(ii) There is a constant $c_F\ge 0$ such that, for a.e.~$t\in (0,T)$ and all $x\in H$,
\begin{align}\label{E:HFii}
\abs{F(t,x)}:=\sup\{\abs{y}:\,y\in F(t,x)\}\le c_F\cdot(1+\abs{x}).
\end{align}

\subsection{Function spaces}
{\color{black}We introduce several function spaces, which are frequently used throughout this work.}
{\color{black}  To this end, we employ the sets
\begin{align}\label{E:Wpq}
W_{pq} (t_0,T):=\{x\in L^p(t_0,T;V):\,x^\prime \in L^q(t_0,T;V^\ast)\},\quad\text{$t_0\in [0,T)$,}
\end{align}
where $x^\prime$ denotes the generalized derivative of $x$
(see p.~4 in \cite{HP2} for more details).}


\begin{definition}
Let $(t_0,x_0)\in [0,T)\times C([0,T],H)$. 

(i) Define trajectory spaces related to our multifunction $F$ by
\begin{equation}\label{E:XF}
\begin{split}
\mathcal{X}^F(t_0,x_0)&:=\{ x\in C([0,T],H)\text{ with $x\vert_{(t_0,T)}\in W_{pq}(t_0,T)$:}\\
&\quad\quad\, \text{$\exists f^x\in L^2(t_0,T;H):\, x=x_0$}\text{ on $[0,t_0]$ and }\\
&\qquad\qquad\, x^\prime(t)+A(t,x(t))=f^x(t)\in F(t,x(t))\text{ a.e.~on $(t_0,T)$\}.}\\
\mathcal{X}^F(T,x_0)&:=\{x_0\}
\end{split}
\end{equation}

(ii) Given $E\subset H$, put
\begin{equation}\label{E:XE}
\begin{split}
\mathcal{X}^E(t_0,x_0)&:=\{ x\in C([0,T],H):\,x\vert_{(t_0,T)}\in W_{pq}(t_0,T)\text{ and   $\exists f\in E_{L_2}$:}\\
&\quad\quad x^\prime(t)+A(t,x(t))=f(t)\text{ a.e.~on $(t_0,T)$,}\,
x=x_0\text{ on $[0,t_0]$}\}.
\end{split}
\end{equation}

(iii) Given $c\ge 0$, define the multifunction $B_c:[0,T]\times C([0,T],H)\rightsquigarrow H$  by
\begin{align}\label{E:XB0}
B_c(t_0,x_0):=\{y\in H:\,\abs{y}\le  c\cdot (1+\sup_{t\le t_0}\abs{x_0(t)})\}
\end{align}
and put
\begin{equation}\label{E:XB}
\begin{split}
\mathcal{X}^{B_c}(t_0,x_0)&:=\{ x\in C([0,T],H)\text{ with $x\vert_{(t_0,T)}\in W_{pq}(t_0,T)$:}\\
&\quad\quad \exists f^x\in L^2(t_0,T;H):\,x=x_0\text{ on $[0,t_0]$ and}\\
&\qquad\qquad
 x^\prime(t)+A(t,x(t))=f^x(t)\in B_c(t,x)\text{ a.e.~on $(t_0,T)$}
\}.
\end{split}
\end{equation}
\end{definition}

\begin{remark}
There should be no danger of confusion between \eqref{E:XF} and \eqref{E:XE}, as in \eqref{E:XF} the superscript $F$ denotes a set-valued function
whereas in \eqref{E:XE} the superscript $E$ denotes a set.
\end{remark}

We omit the proof of the following a-priori estimates, as it is nearly identical to the proof of Lemma~A.1 in \cite{BK18JFA}
(the only 
differences are our 
weaker growth and coercivity hypotheses 
\textbf{H}($A$) (iv) and (v) compared to the ones in section~2.2 of \cite{BK18JFA}). 

\begin{lemma}\label{L:apriori}
Let $c\ge 0$ and $r\ge 0$.  Then there is a constant
$C=C(c,r)>0$ such that, for all
$(t_0,x_0)\in [0,T)\times C([0,T],H)$   with 
{\color{black} $\sup_{t\le t_0}\abs{x_0(t)}\le r$}
and for all  $x\in\mathcal{X}^{B_c}(t_0,x_0)$ with corresponding
function $f^x \in L^2(t_0,T;H)$, we have
\begin{align*}
\norm{x}_\infty+\norm{x}_{W_{pq}(t_0,T)}+\norm{\hat{A}x}_{L^q(t_0,T;V^\ast)}+\norm{f^x}_{L^2(t_0,T;H)}\le C.
\end{align*}
Here, $\hat{A}x:[t_0,T]\to V^\ast$ is defined by $\hat{A} x(t)=A(t,x(t))$.
\end{lemma}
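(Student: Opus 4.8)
The plan is to run the classical energy method, carefully adapting it to the \emph{path-dependent} constraint encoded in $B_c$. I first invoke the standard integration-by-parts formula for the Gelfand triple: every $x\in W_{pq}(t_0,T)$ has a representative for which $t\mapsto\abs{x(t)}^2$ is absolutely continuous with $\frac12\frac{\dd}{\dd t}\abs{x(t)}^2=\langle x^\prime(t),x(t)\rangle$ for a.e.~$t$. Integrating over $[t_0,t]$ and substituting $x^\prime=f^x-\hat A x$ gives, for every $t\in[t_0,T]$,
\begin{align*}
\tfrac12\abs{x(t)}^2+\int_{t_0}^t\langle A(s,x(s)),x(s)\rangle\,\dd s
=\tfrac12\abs{x_0(t_0)}^2+\int_{t_0}^t (f^x(s),x(s))\,\dd s,
\end{align*}
where $\abs{x_0(t_0)}\le r$. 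Applying the coercivity hypothesis \textbf{H}($A$)~(v) to the left-hand integrand moves a term $c_2\int_{t_0}^t\norm{x(s)}^p\,\dd s$ to the left and produces controllable terms $c_3\abs{x(s)}^2+f^A(s)$ on the right.

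The main obstacle is the term $\int_{t_0}^t(f^x(s),x(s))\,\dd s$, because the defining bound $\abs{f^x(s)}\le c\,(1+\sup_{\tau\le s}\abs{x(\tau)})$ coming from $f^x(s)\in B_c(s,x)$ is \emph{nonlocal in time}; a pointwise Gr\"onwall inequality in $\abs{x(t)}^2$ is therefore unavailable. I would instead pass to the running maximum $g(t):=\sup_{t_0\le s\le t}\abs{x(s)}^2$. Since $x=x_0$ on $[0,t_0]$ with $\sup_{t\le t_0}\abs{x_0(t)}\le r$, one has $\sup_{\tau\le s}\abs{x(\tau)}\le\max\{r,\sqrt{g(s)}\}$, and Young's inequality then bounds the integrand $\abs{f^x(s)}\,\abs{x(s)}$ by $C_1+C_2\,g(s)$ with constants depending only on $c$ and $r$. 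As the resulting right-hand side is nondecreasing in $t$, discarding the nonnegative coercive term and taking the supremum over $t$ of $\tfrac12\abs{x(t)}^2$ on the left converts it into $\tfrac12 g(t)$, yielding the Gr\"onwall inequality $g(t)\le \tfrac12 r^2+\int_{t_0}^t\bigl(C_1+C_2\,g(s)+f^A(s)\bigr)\,\dd s$ with $f^A\in L^1(0,T;\R_+)$; Gr\"onwall's lemma gives $\norm{x}_\infty\le C(c,r)$.

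The remaining norms then follow mechanically. Returning to the energy identity with the now-available $L^\infty$ bound controls $\int_{t_0}^T\norm{x(s)}^p\,\dd s$, i.e.~the $L^p(t_0,T;V)$ part of the $W_{pq}$-norm. The constraint gives $\norm{f^x}_{L^2(t_0,T;H)}\le c\,(1+\norm{x}_\infty)\sqrt{T}\le C$ directly. For $\hat A x$, the growth hypothesis \textbf{H}($A$)~(iv) together with $\abs{x(t)}\le\norm{x}_\infty$ and the algebraic identity $(p-1)q=p$ (so that $\int\norm{x(t)}^{(p-1)q}\,\dd t=\int\norm{x(t)}^p\,\dd t$) yields $\norm{\hat A x}_{L^q(t_0,T;V^\ast)}\le C$. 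Finally, $x^\prime=f^x-\hat A x$ combined with the continuous embeddings $H\hookrightarrow V^\ast$ and $L^2(t_0,T)\hookrightarrow L^q(t_0,T)$ (valid because $q\le 2$ when $p\ge 2$) bounds $\norm{x^\prime}_{L^q(t_0,T;V^\ast)}$, completing the $W_{pq}$-estimate. The extra factors $(1+\abs{x}^\alpha)$ and $(1+\abs{x}^\beta)$ in the weaker hypotheses \textbf{H}($A$)~(iv)--(v) and \eqref{E:Ac5} are harmless once $\norm{x}_\infty$ is controlled, since they are then uniformly bounded.
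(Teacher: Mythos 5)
Your argument is correct and is essentially the proof the paper intends: Lemma~\ref{L:apriori} is stated without proof by reference to Lemma~A.1 of \cite{BK18JFA}, which runs exactly this energy estimate --- the Gelfand-triple integration by parts, coercivity, the running-maximum/Gr\"onwall treatment of the nonlocal bound coming from $B_c$, and then the mechanical $L^p(t_0,T;V)$, $L^q(t_0,T;V^\ast)$, and $L^2(t_0,T;H)$ bounds via \textbf{H}($A$)~(iv) and $(p-1)q=p$. As you observe, the only difference from the cited proof is that the extra factors $(1+\abs{x}^\alpha)$ in the weakened growth and coercivity hypotheses are absorbed once $\norm{x}_\infty$ is controlled.
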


\begin{lemma}\label{L:precompact}
Let $c\ge 0$ and $(t_0,x_0)\in [0,T)\times C([0,T],H)$.
Then $\mathcal{X}^{B_c}(t_0,x_0)$ is precompact in $C([0,T],H)$.
In particular, for each sequence $(x_n)_n$ in $\mathcal{X}^{B_c}(t_0,x_0)$   with corresponding sequence $(f^{x_n})_n$ in $L^2(t_0,T;H)$,
there exist a pair $(x,f)\in C([0,T],H) \times L^2(t_0,T;H)$ 
with $x\vert_{(t_0,T)}\in W_{pq}(t_0,T)$
and  a subsequence $(x_{n_k})_k$  of $(x_n)_n$  such that 
$x_{n_k}\to x$ in $C([0,T],H)$,
$x_{n_k} \xrightarrow{w} x$ in $W_{pq}(t_0,T)$, $f^{x_{n_k}}\xrightarrow{w} f$ in $L^2(t_0,T;H)$, and
$x^\prime(t)+A(t,x(t))=f(t)$ a.e.~on $(t_0,T)$.
\end{lemma}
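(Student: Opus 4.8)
The plan is to combine the uniform a-priori bounds of Lemma~\ref{L:apriori} with weak-compactness and the Aubin--Lions compact-embedding machinery to extract a candidate limit, and then to identify this limit as a solution of the evolution equation by a Minty-type monotonicity argument adapted to the merely \emph{local} monotonicity \textbf{H}($A$)~(ii). Concretely, fix the sequence $(x_n)_n$ in $\mathcal{X}^{B_c}(t_0,x_0)$ together with its companions $(f^{x_n})_n$, and note $r:=\sup_{t\le t_0}\abs{x_0(t)}<\infty$. By Lemma~\ref{L:apriori} the sequences $(x_n)_n$, $(\hat{A}x_n)_n$, and $(f^{x_n})_n$ are bounded in $C([0,T],H)\cap W_{pq}(t_0,T)$, in $L^q(t_0,T;V^\ast)$, and in $L^2(t_0,T;H)$, respectively. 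As these are reflexive, Banach--Alaoglu yields a subsequence (not relabeled) with $x_n\xrightarrow{w}x$ in $W_{pq}(t_0,T)$, $\hat{A}x_n\xrightarrow{w}\chi$ in $L^q(t_0,T;V^\ast)$, and $f^{x_n}\xrightarrow{w}f$ in $L^2(t_0,T;H)$. Since each $x_n$ equals $x_0$ on $[0,t_0]$, so does $x$, and passing to the weak limit in $x_n^\prime+\hat{A}x_n=f^{x_n}$ gives $x^\prime+\chi=f$ a.e.~on $(t_0,T)$. It then remains to prove that (a) $x_n\to x$ in $C([0,T],H)$ and (b) $\chi=\hat{A}x$.

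The compactness of the embedding $V\hookrightarrow H$ (whence also $H\hookrightarrow V^\ast$) is what drives the strong convergence. By the Aubin--Lions--Simon compactness theorem, the bounded set $\{x_n\}_n\subset W_{pq}(t_0,T)$ is relatively compact in $L^p(t_0,T;H)$, so after a further subsequence $x_n\to x$ in $L^p(t_0,T;H)$ and $x_n(t)\to x(t)$ in $H$ for a.e.~$t$. Moreover, the uniform bound on $x_n^\prime$ in $L^q(t_0,T;V^\ast)$ supplies $V^\ast$-equicontinuity, so Ascoli--Arzel\`a (using that $H\hookrightarrow V^\ast$ is compact) yields $x_n\to x$ in $C([0,T],V^\ast)$.

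The main obstacle is (b). Because \textbf{H}($A$)~(ii) is only local, testing the inequality $\langle\hat{A}x_n-A(\cdot,\phi),\,x_n-\phi\rangle\ge-(c_0+\rho(x_n)+\eta(\phi))\abs{x_n-\phi}^2$ against a bounded $\phi\in W_{pq}(t_0,T)$ produces a defect term $\int_{t_0}^T(c_0+\rho(x_n)+\eta(\phi))\abs{x_n-\phi}^2$ in which $\rho(x_n)$ varies with $n$ and, by \eqref{E:Ac5}, is controlled only through $1+\norm{x_n}^p$, i.e.~only in $L^1$ in time; a naive passage to the limit is therefore unavailable (all the more since $\rho$ need not be continuous). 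I would resolve this exactly as in Liu--R\"ockner \cite{LiuRoeckner10JFA} (see also \cite{Liu11Nonlin}): the uniform $L^\infty(t_0,T;H)$-bound from Lemma~\ref{L:apriori} tames the factor $1+\abs{x_n}^\beta$ in \eqref{E:Ac5}, and the strong $L^p(t_0,T;H)$-convergence together with the energy balance (the integration-by-parts identity valid on $W_{pq}(t_0,T)$, giving $\limsup_n\int\langle\hat{A}x_n,x_n\rangle\le\int\langle\chi,x\rangle$) lets one pass to the limit in the defect. A standard Minty argument---replacing $\phi$ by $x-\lambda v$ for $v\in L^p(t_0,T;V)\cap L^\infty$, dividing by $\lambda$, letting $\lambda\downarrow0$, and invoking hemicontinuity \textbf{H}($A$)~(iii)---then yields $\chi=\hat{A}x$.

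Finally, for (a) I would upgrade the convergence to $C([0,T],H)$ by the energy method. The evaluation map $x\mapsto x(t)$ is linear and continuous on $W_{pq}(t_0,T)$, hence weakly continuous, so $x_n(t)\xrightarrow{w}x(t)$ in $H$ for every $t$. Writing $\abs{x_n(t)-x(t)}^2=\abs{x_n(t)}^2-2(x_n(t),x(t))+\abs{x(t)}^2$, it suffices to show both $\abs{x_n(t)}^2\to\abs{x(t)}^2$ and $(x_n(t),x(t))\to\abs{x(t)}^2$ uniformly in $t$. The second holds because the functions $t\mapsto(x_n(t)-x(t),x(t))$ converge pointwise to $0$ and are equi-absolutely-continuous (their derivatives are uniformly integrable by the a-priori estimates), hence converge uniformly. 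The first follows from the identity $\abs{x_n(t)}^2=\abs{x_0(t_0)}^2+2\int_{t_0}^t[(f^{x_n},x_n)-\langle\hat{A}x_n,x_n\rangle]$, the analogous identity for the limit (which now solves the equation by (b)), and uniform-in-$t$ convergence of the two integrals---the $f$-term by the strong $L^p$/weak $L^2$ convergences and the dissipation term by the estimate underlying (b). Weak convergence together with convergence of norms then gives $x_n(t)\to x(t)$ in $H$ uniformly in $t$, i.e.~$x_n\to x$ in $C([0,T],H)$, which completes the proof.
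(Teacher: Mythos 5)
Your overall skeleton --- a-priori bounds from Lemma~\ref{L:apriori}, weak compactness, Aubin--Lions, and a Minty argument adapted to local monotonicity via the ``modified monotonicity trick'' --- is exactly what the paper's proof does (it delegates these steps to Lemma~A.2 of \cite{BK18JFA} and Lemma~2.5 of \cite{Liu11Nonlin}). The genuine divergence, and the genuine gap, is your last paragraph: the upgrade to convergence in $C([0,T],H)$. The paper obtains it by applying the local monotonicity inequality \textbf{H}($A$)~(ii) directly to the difference of the two trajectories,
\[
\tfrac12\abs{x_{n_k}(t)-x(t)}^2\le\int_{t_0}^t\bigl(c_0+\rho(x_{n_k}(s))+\eta(x(s))\bigr)\abs{x_{n_k}(s)-x(s)}^2\,ds+2C\norm{x_{n_k}-x}_{L^2(t_0,T;H)},
\]
and then Gronwall: since \eqref{E:Ac5} and Lemma~\ref{L:apriori} bound $\int_{t_0}^T(c_0+\rho(x_{n_k})+\eta(x))\,ds$ uniformly in $k$, the exponential factor is a fixed constant and $\sup_t\abs{x_{n_k}(t)-x(t)}^2\le 2C\tilde{C}\norm{x_{n_k}-x}_{L^2(t_0,T;H)}\to0$. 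The point is that Gronwall absorbs the locally monotone defect term without requiring it to vanish.

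Your energy-method substitute does not close, for two reasons. First, by the very energy identity you invoke, the convergence $\int_{t_0}^t\langle\hat{A}x_n,x_n\rangle\,ds\to\int_{t_0}^t\langle\hat{A}x,x\rangle\,ds$ at a fixed interior time $t$ is \emph{equivalent} to $\abs{x_n(t)}^2\to\abs{x(t)}^2$, so it cannot be used as an input. Weak convergence gives only the one-sided bound $\varliminf_n\abs{x_n(t)}^2\ge\abs{x(t)}^2$; the reverse inequality requires controlling the defect $\int_{t_0}^t(c_0+\rho(x_n)+\eta(x))\abs{x_n-x}^2\,ds$, and this need not tend to zero merely because $\norm{x_n-x}_{L^2(t_0,T;H)}\to0$ while the weights stay bounded in $L^1$: the weight can concentrate on the small set where $\abs{x_n-x}$ is still of order one. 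Second, even granting pointwise convergence, your route to uniformity fails: the integrands $\langle\hat{A}x_n,x_n\rangle$ are products of an $L^q(t_0,T;V^\ast)$-bounded and an $L^p(t_0,T;V)$-bounded sequence, hence bounded in $L^1$ but \emph{not} uniformly integrable, so the maps $t\mapsto\int_{t_0}^t\langle\hat{A}x_n,x_n\rangle\,ds$ are not equi-absolutely continuous and pointwise convergence does not upgrade to uniform convergence. (Your treatment of the cross term $(x_n(t)-x(t),x(t))$ is fine, since there one factor of each product is a \emph{fixed} $L^p$ or $L^q$ function.) The remedy is to replace the final paragraph by the Gronwall argument above.
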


\begin{proof}
The proof is essentially the same as the proof of  Lemma~A.2 in \cite{BK18JFA} with the exception of the following two points (both due to $A$ only
being locally monotone as required in  \textbf{H}($A$)~(ii), which is weaker than monotonicty).
First, instead of the usual monotonicity trick (p.~474 in
\cite{ZeidlerIIB}), one should use the ``modified monotonicity trick" (Lemma~2.5 in \cite{Liu11Nonlin}).
The second point is about the convergence of $(x_{n_k})$ to $x$ in $C([0,T],H)$ assuming 
that \eqref{E:Ac5} holds and that the remaining parts of the lemma's statement are already established.
We show this convergence by slightly adjusting part~(ii) of the proof of Theorem~1.1 in \cite{Liu11Nonlin}.
Comparing with the proof of Lemma~A.2 in \cite{BK18JFA}, we replace (A.9) in \cite{BK18JFA} by
\begin{equation}\label{E:L:precompact1}
\begin{split}
\frac{1}{2}\abs{x_{n_k}(t)-x(t)}^2&=\int_{t_0}^t -\langle A(s,x_{n_k}(s))-A(s,x(s)),x_{n_k}(s)-x(s)\rangle\\
&\qquad\qquad + (f^{x_{n_k}}(s)-f^x(s),x_{n_k}(s)-x(s))\,ds\\
&\le \int_{t_0}^t 
(c_0+\rho(x_{n_k}(s))+\eta(x(s))) \abs{x_{n_k}(s)-x(s)}^2\,ds\\
&\qquad\qquad +2C\norm{x_{n_k}-x}_{L^2(t_0,T;H)}
\end{split}
\end{equation}
This is correct in our setting 
thanks to  \textbf{H}($A$)~(ii). The constant $C$ in \eqref{E:L:precompact1} is the same as in Lemma~\ref{L:apriori}.
Since \eqref{E:L:precompact1} holds for every $t\in [t_0,T]$, Gronwall's lemma together with
\eqref{E:Ac5} and Lemma~\ref{L:apriori}  yield
\begin{align*}
\sup_{t\in [t_0,T]} \abs{x_{n_k}(t)-x(t)}^2&\le  2C\norm{x_{n_k}-x}_{L^2(t_0,T;H)}
e^{\int_{t_0}^T 
(c_0+\rho(x_{n_k}(s))+\eta(x(s)))\,ds}\\ &\le 2C\tilde{C}\norm{x_{n_k}-x}_{L^2(t_0,T;H)}\to 0\text{ as $k\to\infty$}
\end{align*}
for some constant $\tilde{C}$ that is independent from $t$ and $k$. 
\end{proof}

\begin{theorem}\label{T:closure}
Fix $(t_\ast,x_\ast)\in [0,T]\times C([0,T],H)$. 
Let $(t_n,x_n)_n$ be a sequence in {\color{black}$[t_\ast,T]\times \mathcal{X}^{B_{(c_F)+1}}(t_\ast,x_\ast)$} 
that converges to some pair
$(t_0,x_0)\in [t_\ast,T]\times C([0,T],H)$.  Then every sequence $(\tilde{x}_n)_n$ with
$\tilde{x}_n\in\mathcal{X}^{F}(t_n,x_n)$ has a convergent subsequence with limit in 
$\mathcal{X}^F(t_0,x_0)$. 
\end{theorem}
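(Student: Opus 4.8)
The plan is to realize the limit of the $\tilde{x}_n$ as an element of $\mathcal{X}^F(t_0,x_0)$ by combining the precompactness from Lemma~\ref{L:precompact} with a standard closure argument for the differential inclusion. The first---and slightly technical---step is to observe that each $\tilde{x}_n$ actually belongs to the precompact set $\mathcal{X}^{B_{c_F+1}}(t_\ast,x_\ast)$. Indeed, $\tilde{x}_n$ coincides with $x_n$ on $[0,t_n]$ and hence with $x_\ast$ on $[0,t_\ast]$; on $(t_\ast,t_n)$ it solves the same equation as $x_n$, whose forcing $f^{x_n}(t)$ lies in $B_{c_F+1}(t,x_n)=B_{c_F+1}(t,\tilde{x}_n)$ (the two balls agree because $\tilde{x}_n=x_n$ on $[0,t]$ for $t<t_n$); and on $(t_n,T)$ the inclusion $f^{\tilde{x}_n}(t)\in F(t,\tilde{x}_n(t))$ together with the linear growth~\eqref{E:HFii} yields $\abs{f^{\tilde{x}_n}(t)}\le c_F(1+\abs{\tilde{x}_n(t)})\le (c_F+1)(1+\sup_{s\le t}\abs{\tilde{x}_n(s)})$, so again the forcing lies in $B_{c_F+1}(t,\tilde{x}_n)$. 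Concatenating the two $W_{pq}$-pieces (which match continuously at $t_n$) and the two $L^2$-forcings shows $\tilde{x}_n\in\mathcal{X}^{B_{c_F+1}}(t_\ast,x_\ast)$.

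By Lemma~\ref{L:precompact} there are a subsequence (not relabeled) and a pair $(\tilde{x},f)$ with $\tilde{x}_n\to\tilde{x}$ in $C([0,T],H)$, $f^{\tilde{x}_n}\xrightarrow{w}f$ in $L^2(t_\ast,T;H)$, and $\tilde{x}^\prime(t)+A(t,\tilde{x}(t))=f(t)$ a.e.~on $(t_\ast,T)$, hence in particular a.e.~on $(t_0,T)$. To identify the initial segment, fix $t<t_0$; then $t<t_n$ for all large $n$, so $\tilde{x}_n(t)=x_n(t)$, while $\mathbf{d}_\infty(t_n,x_n;t_0,x_0)\to 0$ forces $x_n(t)\to x_0(t)$ for such $t$. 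Uniform convergence of $\tilde{x}_n$ then gives $\tilde{x}(t)=x_0(t)$, and continuity extends this to all of $[0,t_0]$. If $t_0=T$ this already yields $\tilde{x}=x_0\in\mathcal{X}^F(T,x_0)$, so from now on assume $t_0<T$.

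The main obstacle is to pass to the limit in the inclusion and obtain $f(t)\in F(t,\tilde{x}(t))$ for a.e.~$t\in(t_0,T)$; this is where convexity, closedness, and upper semicontinuity of $F$ enter. Since $f^{\tilde{x}_n}\xrightarrow{w}f$ in $L^2(t_0,T;H)$, Mazur's lemma provides finite convex combinations $g_m=\sum_{n\ge m}\lambda_{m,n}f^{\tilde{x}_n}$ converging to $f$ strongly in $L^2(t_0,T;H)$, and, along a further subsequence, $g_m(t)\to f(t)$ for a.e.~$t$. Fix such a $t\in(t_0,T)$ (also lying outside the countable union of null sets carrying the inclusions) and let $\eps>0$. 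Because $t_n\to t_0<t$, the inclusion $f^{\tilde{x}_n}(t)\in F(t,\tilde{x}_n(t))$ holds once $n$ is large; and because $\tilde{x}_n(t)\to\tilde{x}(t)$, applying the upper semicontinuity~\textbf{H}($F$)~(i) to the open $\eps$-neighborhood of $F(t,\tilde{x}(t))$ gives $F(t,\tilde{x}_n(t))\subset F(t,\tilde{x}(t))+B(0,\eps)$ for all large $n$. The set $F(t,\tilde{x}(t))+B(0,\eps)=\{y:\,\inf_{z\in F(t,\tilde{x}(t))}\abs{y-z}\le\eps\}$ is closed and convex, so each $g_m(t)$ with $m$ large, being a convex combination of points of this set, again lies in it; letting $m\to\infty$ yields $f(t)\in F(t,\tilde{x}(t))+B(0,\eps)$. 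Finally, letting $\eps\downarrow 0$ and using that $F(t,\tilde{x}(t))$ is closed gives $f(t)\in F(t,\tilde{x}(t))$. Together with the limit equation, the fact that $f\in L^2(t_0,T;H)$, and $\tilde{x}=x_0$ on $[0,t_0]$, this shows $\tilde{x}\in\mathcal{X}^F(t_0,x_0)$, as required.
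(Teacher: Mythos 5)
Your proof is correct and follows essentially the same route as the paper: embed the $\tilde{x}_n$ into the precompact set $\mathcal{X}^{B_{(c_F)+1}}(t_\ast,x_\ast)$, extract a limit via Lemma~\ref{L:precompact}, identify it with $x_0$ on $[0,t_0]$, and then pass to the limit in the inclusion. The only difference is that where the paper invokes Lemma~2.6.2 of \cite{Carja07book} for the closure of the inclusion (working on $(t_0+1/m,T)$ and letting $m\to\infty$), you prove that closure property directly via Mazur's lemma, the convexity/closedness of the values, and the upper semicontinuity of $F$ --- a self-contained version of the same argument --- and you additionally spell out the embedding into $\mathcal{X}^{B_{(c_F)+1}}(t_\ast,x_\ast)$ and the degenerate case $t_0=T$, which the paper leaves implicit.
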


\begin{proof}
Fix a sequence $(\tilde{x}_n)_n$ with $\tilde{x}_n\in\mathcal{X}^{F}(t_n,x_n)$.
By \eqref{E:HFii} in  \textbf{H}($F$)~(ii), we have 
{\color{black} $\mathcal{X}^F(t_n,x_n)\subset \mathcal{X}^{B_{(c_F)+1}}(t_\ast,x_\ast)$},
 i.e., $\tilde{x}_n=x_\ast$ on $[0,t_\ast]$ and
{\color{black}$\tilde{x}_n^\prime(t)+A(t,\tilde{x}_n(t))=f^{\tilde{x}_n}(t)\in B_{(c_F)+1}(t,\tilde{x}_n)$} a.e.~on $(t_\ast,T)$
for some $f^{\tilde{x}_n}\in L^2(t_\ast,T;H)$.
Hence, by Lemma~\ref{L:precompact}, $(t_n,x_n,\tilde{x}_n)_n$ has a subsequence,
which  we still denote by $(t_n,x_n,\tilde{x}_n)_n$, such that $\tilde{x}_n\to\tilde{x}_0$ in $C([0,T],H)$
and $f^{\tilde{x}_n}\xrightarrow{w} f^{\tilde{x}_0}$ in $L^2(t_\ast,T;H)$
for some pair $(\tilde{x}_0,f^{\tilde{x}_0})\in C([0,T],H)\times L^2(t_\ast,T;H)$ that additionally satisfies
$\tilde{x}_0^\prime(t)+A(t,\tilde{x}_0(t))=f^{\tilde{x}_0}(t)$ a.e.~on $(t_\ast,T)$,
$\tilde{x}_0=x_\ast$ on $[0,t_\ast]$, and $\tilde{x}_0\vert_{(t_\ast,T)}\in W_{pq}(t_\ast,T)$. Clearly,
we also have $\tilde{x}_0=x_0$ on $[0,t_0]$.
Next, let $m\in\N$ with $1/m\in (0,T-t_0)$. Without loss of generality, assume that $t_n<t_0+1/m$ for all $n\ge m$, i.e.,
$f^{\tilde{x}_n}(t)\in F(t,\tilde{x}_n(t))$ a.e.~on $(t_0+1/m,T)$ whenever $n\ge m$. 
Hence, by Lemma~2.6.2 in \cite{Carja07book}, $f^{\tilde{x}_0}(t)\in F(t,\tilde{x}_0(t))$ a.e.~on $(t_0+1/m,T)$.
Finally, the arbitrariness of $m$ yields $\tilde{x}_0\in\mathcal{X}^F(t_0,x_0)$.
\end{proof}

We will very often use the following regularity condition\footnote{
Recall the constant $c_F$ from \eqref{E:HFii} in  \textbf{H}($F$)~(ii) and $\mathcal{X}^{B_c}$ from \eqref{E:XB}.
} for a function $u:[0,T]\times C([0,T],H)\to\R\cup\{+\infty\}$
that is slightly weaker than lower semicontinuity:
\begin{equation}\label{u:LSC}
\begin{split}
\forall (t_\ast,x_\ast)\in [0,T]\times C([0,T],H):\,
u\vert_{[t_\ast,T]\times\mathcal{X}^{{\color{black}B_{(c_F)+1}}}(t_\ast,x_\ast)}\text{ is l.s.c.}
\end{split}
\end{equation}

\section{Viability}
Given $(t_0,x_0)\in [0,T)\times C([0,T],H)$, consider the evolution inclusion
\begin{equation}\label{E:Inclusion}
\begin{split}
x^\prime(t)+A(t,x(t))&\in F(t,x(t))\text{ a.e.~on $(t_0,T)$,}\\
x&=x_0\text{ on $[0,t_0]$.}
\end{split}
\end{equation}

\begin{definition}
A set $K\subset H$ is \emph{viable for \eqref{E:Inclusion}} if, for every $(t_0,x_0)\in [0,T)\times C([0,T],H)$
with $x_0(t_0)\in K$, there exists an $x\in\mathcal{X}^F(t_0,x_0)$ such $x(t)\in K$ for all $t\in [t_0,T]$.\footnote{
Recall the definition of $\mathcal{X}^F(t_0,x_0)$
from \eqref{E:XF}.
}
\end{definition}

The next definition is adapted from Remark 10.1 of \cite{Carja09TAMS}. 

\begin{definition}\label{D:QTS1}
Fix $K\subset H$.
Let $(t_0,x_0)\in [0,T]\times C([0,T],H)$ with $x_0(t_0)\in K$.
A non-empty set $E\subset H$ is
\emph{$A$-quasi-tangent to $K$ at $(t_0,x_0)$} if
there are sequences $(\delta_n)_n$ in $\R_+$ with $\delta_n\downarrow 0$,
$(b_n)_n$ in $L^2(0,T;H)$, 
$(p_n)_n$ in $B(0,1/n)_{L^2}$,
 and $(x_n)_n$ in $C([0,T],H)$ with  $x_n\vert_{(t_0,T)}\in W_{pq}(t_0,T)$ such that
\begin{align*}
x_n^\prime(t)+A(t,x_n(t))&=b_n(t)+p_n(t)\text{ a.e.~on  $(t_0,{\color{black} T})$,}\\ 
b_n(t)& \in E \text{ a.e.~on $(t_0,t_0+\delta_n)$,}\\
x_n&=x_0\text{ on $[0,t_0]$, and}\\
x_n(t_0+\delta_n)&\in K.
\end{align*}
We write $\mathcal{QTS}^A_K(t_0,x_0)$
for
the class of all $A$-quasi-tangent sets to $K$ at $(t_0,x_0)$.
\end{definition}

{\color{black} Our main result on necessary and sufficient conditions of viability for  \eqref{E:Inclusion}
can be found below in subsection~\ref{SS:NecSuff}.}

{\color{black} Next, we study  viability for an extension of \eqref{E:Inclusion}, which is useful for 
applications to partial differential equations in section~\ref{S:HJB}.}
Given $(t_0,x_0,y_0)\in [0,T)\times C([0,T],H)\times\R$, consider the system
\begin{equation}\label{E:Inclusion2}
\begin{split}
s^\prime(t)&=1\qquad\qquad\qquad\text{on $(t_0,T)$,}\\
x^\prime(t)+A(t,x(t))&\in F(t,x(t))\,\,\,\qquad\text{a.e.~on $(t_0,T)$,}\\
y^\prime(t)&=0\qquad\qquad\qquad\text{on $(t_0,T)$,}\\
(s(t_0),x\vert_{[0,t_0]},y(t_0))&=(t_0,x_0\vert_{[0,t_0]},y_0).
\end{split}
\end{equation}

\begin{definition}
The epigraph of a function $u:[0,T]\times C([0,T],H)\to\R\cup\{+\infty\}$
  is \emph{viable for \eqref{E:Inclusion2}} if, for every $(t_0,x_0,y_0)\in\mathrm{epi}\,u$ with $t_0<T$,
   there exists an $x\in\mathcal{X}^F(t_0,x_0)$ such  that $(t,x,y_0)\in\mathrm{epi}\,u$
  for all $t\in [t_0,T]$.
\end{definition}

The next definition extends Definition~\ref{D:QTS1} in the spirit of Definition~4.4 of 
\cite{Keller24}. 
\begin{definition}\label{D:QTS2}
Fix  $u:[0,T]\times C([0,T],H)\to\R\cup\{+\infty\}$. 
Let $(t_0,x_0,y_0)\in\mathrm{epi}\,u$ with $t_0<T$.
A  non-empty set $E\subset H$ is
\emph{$A$-quasi-tangent to $\mathrm{epi}\,u$ at $(t_0,x_0,y_0)$} if
there are sequences $(\delta_n)_n$ in $\R_+$ with $\delta_n\downarrow 0$,
 $(b_n)_n$ in $L^2(0,T;H)$, 
$(p_n)_n$ in $B(0,1/n)_{L^2}$,
and $(x_n)_n$ in $C([0,T],H)$ with  $x_n\vert_{(t_0,T)}\in W_{pq}(t_0,T)$ such that
\begin{equation}\label{E:QTS2}
\begin{split}
x_n^\prime(t)+A(t,x_n(t))&=b_n(t)+p_n(t)\text{ a.e.~on  $(t_0,{\color{black} T})$,}\\ 
b_n(t)&\in E \text{ a.e.~on $(t_0,t_0+\delta_n)$,}\\
x_n&=x_0\text{ on $[0,t_0]$, and}\\
u(t_0+\delta_n,x_n)&\le y_0+\delta_n/n.
\end{split}
\end{equation}
We write $\mathcal{QTS}^A_{\mathrm{epi}\,u}(t_0,x_0,y_0)$
for the class of all $A$-quasi-tangent sets to $\mathrm{epi}\,u$ at $(t_0,x_0,y_0)$.
\end{definition}

\begin{lemma}\label{L:Ku}
Let $K\subset H$. Consider $u:[0,T]\times C([0,T],H)\to\R$  defined by
$u(t,x):=-\bfone_K(x(t))$.\footnote{If $K$ is closed, then $u$ is l.s.c.} 
Then the following holds:

(i) The set $K$ is viable for \eqref{E:Inclusion}
if and only if $\mathrm{epi}\,u$ is viable for \eqref{E:Inclusion2}.

(ii) Let $E$ be a non-empty subset of $H$. Then  
$E\in \mathcal{QTS}^A_K(t_0,x_0)$ for each $(t_0,x_0)\in [0,T)\times C([0,T],H)$ with $x_0(t_0)\in K$ if and only if
$E\in \mathcal{QTS}^A_{\mathrm{epi}\,u}(t_0,x_0,y_0)$ for each $(t_0,x_0,y_0)\in\mathrm{epi}\,u$ with $t_0<T$.
\end{lemma}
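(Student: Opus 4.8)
The plan is to prove both equivalences by carefully unwinding the definitions and exploiting the special structure of the function $u(t,x)=-\bfone_K(x(t))$, whose epigraph has a very explicit description. I first observe that, since $u$ takes only the two values $0$ and $-1$, a pair $(t_0,x_0,y_0)$ lies in $\mathrm{epi}\,u$ exactly when $y_0\ge u(t_0,x_0)=-\bfone_K(x_0(t_0))$. Because $y_0$ is carried unchanged along \eqref{E:Inclusion2} (the $y$-component satisfies $y'=0$), the condition $(t,x,y_0)\in\mathrm{epi}\,u$ for all $t\in[t_0,T]$ reads $y_0\ge -\bfone_K(x(t))$ for all such $t$. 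The key dichotomy is then: if $y_0\ge 0$, this holds automatically for any trajectory $x$ (since $-\bfone_K\ge -1$ always and in fact $-\bfone_K(x(t))\le 0\le y_0$), whereas if $y_0\in[-1,0)$ — forcing $x_0(t_0)\in K$ and $y_0=-1$ up to the epigraph inequality — the constraint becomes $\bfone_K(x(t))=1$, i.e. $x(t)\in K$, for all $t\in[t_0,T]$.

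For part (i), I would argue as follows. Suppose $K$ is viable for \eqref{E:Inclusion}. Given $(t_0,x_0,y_0)\in\mathrm{epi}\,u$ with $t_0<T$, I split into the two cases above. If $y_0\ge 0$, any $x\in\mathcal{X}^F(t_0,x_0)$ works (such $x$ exists because $\mathcal{X}^F(t_0,x_0)$ is nonempty, which follows from the standing hypotheses, e.g.\ via Theorem~\ref{T:closure} or the construction underlying Lemma~\ref{L:precompact}), since $(t,x,y_0)\in\mathrm{epi}\,u$ trivially. If $y_0<0$, then $\mathrm{epi}\,u$ membership forces $x_0(t_0)\in K$, and viability of $K$ supplies an $x\in\mathcal{X}^F(t_0,x_0)$ with $x(t)\in K$ for all $t$, giving $u(t,x)=-1\le y_0$, hence $(t,x,y_0)\in\mathrm{epi}\,u$. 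Conversely, if $\mathrm{epi}\,u$ is viable for \eqref{E:Inclusion2} and $x_0(t_0)\in K$, I apply epigraph viability to the point $(t_0,x_0,-1)\in\mathrm{epi}\,u$ to extract the desired constrained trajectory, reading off $x(t)\in K$ from $(t,x,-1)\in\mathrm{epi}\,u$.

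Part (ii) follows the same dichotomy at the infinitesimal level. I would compare Definition~\ref{D:QTS1} and Definition~\ref{D:QTS2} directly for this particular $u$. Both definitions share verbatim the requirements on $(\delta_n)_n$, $(b_n)_n$, $(p_n)_n$, and $(x_n)_n$ (the evolution equation with right-hand side $b_n+p_n$, the constraint $b_n(t)\in E$ on $(t_0,t_0+\delta_n)$, and $x_n=x_0$ on $[0,t_0]$); the only difference is the terminal condition, which is $x_n(t_0+\delta_n)\in K$ in Definition~\ref{D:QTS1} versus $u(t_0+\delta_n,x_n)\le y_0+\delta_n/n$ in Definition~\ref{D:QTS2}. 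For the forward direction I take $(t_0,x_0,y_0)\in\mathrm{epi}\,u$ with $t_0<T$ and again split: when $y_0\ge 0$, the epigraph terminal inequality $u(t_0+\delta_n,x_n)\le y_0+\delta_n/n$ is automatic because $u\le 0\le y_0$, so any quasi-tangent data — in particular that witnessing $E\in\mathcal{QTS}^A_K(t_0,x_0)$, which exists since $x_0(t_0)\in K$ — works; when $y_0<0$, membership in $\mathrm{epi}\,u$ forces $x_0(t_0)\in K$ and $y_0=-1$, and the terminal inequality $-\bfone_K(x_n(t_0+\delta_n))\le -1+\delta_n/n$ is, for $\delta_n/n<1$, equivalent to $\bfone_K(x_n(t_0+\delta_n))=1$, i.e.\ $x_n(t_0+\delta_n)\in K$, matching Definition~\ref{D:QTS1} exactly. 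The reverse direction reuses the same witnessing sequences. The one point requiring care — and the closest thing to a genuine obstacle — is this terminal-value translation: I must check that the relaxed inequality $u(t_0+\delta_n,x_n)\le y_0+\delta_n/n$ with the $\delta_n/n$ slack collapses to the exact membership $x_n(t_0+\delta_n)\in K$ precisely because $u$ is integer-valued and $\delta_n\downarrow 0$ makes the slack eventually smaller than the gap $1$ between the two attainable values of $u$; this is where the two-valued structure of $\bfone_K$ does all the work and where one must confirm that no genuine loss occurs in passing between the two formulations.
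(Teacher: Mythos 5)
Your overall strategy is the same as the paper's: both parts are proved by unwinding the definitions and exploiting the two‑valued structure of $u(t,x)=-\bfone_K(x(t))$, with the dichotomy between epigraph points at level $y_0\ge 0$ (where the epigraph conditions are vacuous) and those with $y_0<0$ (which force $x_0(t_0)\in K$), and with the observation that the slack $\delta_n/n$ eventually falls below the gap $1$ between the two attainable values of $u$. Part (i) and the reverse implication of part (ii) match the paper's argument.

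In the forward implication of part (ii), however, your case $y_0\ge 0$ contains a concrete slip: you assert that the data witnessing $E\in\mathcal{QTS}^A_K(t_0,x_0)$ ``exists since $x_0(t_0)\in K$,'' but $y_0\ge 0$ is compatible with $x_0(t_0)\notin K$, and in that subcase the hypothesis of (ii) says nothing about $(t_0,x_0)$, so there are no sequences to borrow. The terminal inequality $u(t_0+\delta_n,x_n)\le y_0+\delta_n/n$ is indeed automatic there, but Definition~\ref{D:QTS2} still requires exhibiting at least one admissible quadruple $(\delta_n,b_n,p_n,x_n)$ with $x_n^\prime+A(\cdot,x_n)=b_n+p_n$ and $b_n\in E$ on $(t_0,t_0+\delta_n)$. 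The paper therefore splits on $x_0(t_0)\in K$ versus $x_0(t_0)\notin K$ rather than on the sign of $y_0$, and in the latter case invokes the existence theorem for locally monotone evolution equations (Theorem~1.1 in \cite{Liu11Nonlin}), e.g.\ with $b_n\equiv e$ for a fixed $e\in E$ and $p_n\equiv 0$. A related point arises in your part (i) for $y_0\ge 0$: the nonemptiness of $\mathcal{X}^F(t_0,x_0)$ does not follow from Theorem~\ref{T:closure} or Lemma~\ref{L:precompact} (these are compactness, not existence, statements), though the paper is equally terse here. With these one‑line repairs your argument coincides with the paper's proof.
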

\begin{proof}
(i) First, assume that  $K$ is viable for \eqref{E:Inclusion}. Fix $(t_0,x_0,y_0)\in\mathrm{epi}\,u$ with $t_0<T$. If $y_0\ge 0$, then
$(t,x,y_0)\in\mathrm{epi}\, u$ on $[t_0,T]$ for all $x\in\mathcal{X}^F(t_0,x_0)$. 
If $y_0\in [-1,0)$, then $u(t_0,x_0)=-1$, i.e., $x_0\in K$ and thus there is an $x\in\mathcal{X}^F(t_0,x_0)$ such
that $x(t)\in K$ on $[t_0,T]$, which yields $y_0\ge u(t,x)$ on $[t_0,T]$.

Next, assume that  $\mathrm{epi}\,u$ is viable for \eqref{E:Inclusion2}. Fix $(t_0,x_0)\in [0,T)\times C([0,T],H)$ with $x_0(t_0)\in K$.
Since $(t_0,x_0,u(t_0,x_0))\in\mathrm{epi}\,u$, there is an $x\in\mathcal{X}^F(t_0,x_0)$
such that $-1=u(t_0,x_0)\ge u(t,x)=-\bfone_K(x(t))$ on $[t_0,T]$, i.e., $K$ is viable for \eqref{E:Inclusion}.

(ii) 
 First, let $(t_0,x_0,y_0)\in\mathrm{epi}\,u$ with $t_0<T$ and assume
 that  $E\in \mathcal{QTS}^A_K(t_\ast,x_\ast)$ for each $(t_\ast,x_\ast)\in [0,T)\times C([0,T],H)$ with $x_\ast(t_\ast)\in K$.
 If $x_0(t_0)\in K$, then  $E\in \mathcal{QTS}^A_K(t_0,x_0)$ with corresponding 
 sequences $(\delta_n)_n$, $(b_n)_n$, $(p_n)_n$,
and $(x_n)_n$ from Definition~\ref{D:QTS1}, which satisfy  $x_n(t_0+\delta_n)\in K$ for all $n\in\N$,
 and consequently, we have  $-1=u(t_0+\delta_n,x_n)\le y_0+\delta_n/n$ because $y_0\ge -1$ due to $x_0(t_0)\in K$, i.e.,
$E\in \mathcal{QTS}^A_{\mathrm{epi}\,u}(t_0,x_0,y_0)$.
 If $x_0(t_0)\not\in K$, then $y_0\ge 0$ and thus
 $u(t_0+\delta_n,x_n)\le y_0+\delta_n/n$ from \eqref{E:QTS2} is automatically satisfied for all possible sequences $(\delta_n)_n$ and $(x_n)_n$ from
 Definition~\ref{D:QTS2} 
  (we only need to invoke standard existence results for evolution equations
 such as Theorem~1.1 in \cite{Liu11Nonlin}
  to ensure the existence of at least one such
 sequence $(x_n)_n$). In any case, we have  
$E\in \mathcal{QTS}^A_{\mathrm{epi}\,u}(t_0,x_0,y_0)$.
 
 Now, let $(t_0,x_0)\in [0,T)\times C([0,T],H)$ with $x_0(t_0)\in K$ and assume
 that $E\in \mathcal{QTS}^A_{\mathrm{epi}\,u}(t_0,x_0,u(t_0,x_0))$
 with corresponding sequences $(\delta_n)_n$, $(b_n)_n$, $(p_n)_n$,
and $(x_n)_n$ from Definition~\ref{D:QTS2}.
By \eqref{E:QTS2} and $x_0(t_0)\in K$, we have
 $u(t_0+\delta_n,x_n)\le u(t_0,x_0)+\delta_n/n=-1+\delta_n/n$ for each $n\in\N$.
 Since $\delta_n\downarrow 0$, we have $u(t_n,x_n)<0$  and thus $x_n(t_0+\delta_n)\in K$ for all sufficiently large $n\in\N$.
Hence, $E\in \mathcal{QTS}^A_K(t_0,x_0)$.
\end{proof}

\subsection{{\color{black} Necessary condition for viability for  \eqref{E:Inclusion2}}}
\begin{theorem}\label{T:Necessary2}
Let 
$u:[0,T]\times C([0,T],H)\to\R\cup\{+\infty\}$. 
Suppose that 
$\mathrm{epi}\,u$ is viable for \eqref{E:Inclusion2}.
Then, for every $(t_0,x_0,y_0)\in \mathrm{epi}\,u$ with $t_0<T$, we have
\begin{equation}\label{E:T:Necessary2}
\begin{split}
F(t_0,x_0(t_0))\in \mathcal{QTS}^A_{\mathrm{epi}\,u}(t_0,x_0,y_0).
\end{split}
\end{equation}
\end{theorem}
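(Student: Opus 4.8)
The plan is to extract a single viable trajectory from the hypothesis and then reuse it for every term of the sequences required by Definition~\ref{D:QTS2}, letting only the splitting of its driving term vary with $n$. Concretely, since $(t_0,x_0,y_0)\in\mathrm{epi}\,u$ with $t_0<T$ and $\mathrm{epi}\,u$ is viable for \eqref{E:Inclusion2}, there is an $x\in\mathcal{X}^F(t_0,x_0)$ with $u(t,x)\le y_0$ for all $t\in[t_0,T]$. By \eqref{E:XF} this $x$ comes with some $f^x\in L^2(t_0,T;H)$ such that $x^\prime(t)+A(t,x(t))=f^x(t)\in F(t,x(t))$ a.e.~on $(t_0,T)$ and $x=x_0$ on $[0,t_0]$. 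Writing $E:=F(t_0,x_0(t_0))$ (a non-empty, closed, convex subset of $H$ by the standing hypotheses on $F$), I will set $x_n:=x$ for every $n$ and construct $\delta_n$, $b_n$, $p_n$ so that the four requirements in \eqref{E:QTS2} hold.

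The crux is to control the distance of $f^x(t)$ to the \emph{fixed} set $E$ for $t$ close to $t_0$; this is where \textbf{H}($F$)~(i) enters. Fix $n\in\N$ and apply upper semicontinuity of $F$ at $(t_0,x_0(t_0))$ to the open neighborhood $E+\{z\in H:\abs{z}<1/n\}$ of $E$: this yields an open neighborhood $U_n$ of $(t_0,x_0(t_0))$ in $[0,T]\times H$ with $F(s,y)\subset E+\{z\in H:\abs{z}<1/n\}$ for all $(s,y)\in U_n$. Since $x\in C([0,T],H)$ and $x(t_0)=x_0(t_0)$, the map $s\mapsto(s,x(s))$ is continuous at $t_0$, so there is $\delta_n^\prime>0$ with $(s,x(s))\in U_n$ for all $s\in[t_0,t_0+\delta_n^\prime]$. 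Setting $\delta_n:=\min_{1\le k\le n}\min\{\delta_k^\prime,1/k,(T-t_0)/2\}$ gives a sequence with $\delta_n\downarrow 0$ for which, because $f^x(s)\in F(s,x(s))$, we have $\mathrm{dist}(f^x(s),E)<1/n$ for a.e.~$s\in(t_0,t_0+\delta_n)$.

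It remains to split $f^x$. On $(t_0,t_0+\delta_n)$ let $b_n$ be the metric projection of $f^x$ onto $E$ and set $p_n:=f^x-b_n$; elsewhere on $(t_0,T)$ set $b_n:=f^x$, $p_n:=0$, and on $(0,t_0)$ set $b_n:=0$, $p_n:=0$. Since the metric projection onto the closed convex set $E$ is nonexpansive, hence continuous, $b_n$ is measurable, and $\abs{b_n}\le\abs{f^x}+1/n$ gives $b_n\in L^2$, while $\abs{p_n(s)}=\mathrm{dist}(f^x(s),E)<1/n$ on $(t_0,t_0+\delta_n)$ and $p_n=0$ otherwise shows $p_n\in B(0,1/n)_{L^2}$. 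Then $b_n(t)\in E$ a.e.~on $(t_0,t_0+\delta_n)$, $x_n^\prime+A(\cdot,x_n)=f^x=b_n+p_n$ a.e.~on $(t_0,T)$, $x_n=x_0$ on $[0,t_0]$, and $u(t_0+\delta_n,x_n)=u(t_0+\delta_n,x)\le y_0\le y_0+\delta_n/n$, which is exactly \eqref{E:QTS2}. Hence $E\in\mathcal{QTS}^A_{\mathrm{epi}\,u}(t_0,x_0,y_0)$. The only genuine subtlety is the approximation step of the second paragraph: one must pass from $f^x(t)\in F(t,x(t))$ to closeness to the single limiting set $E=F(t_0,x_0(t_0))$, and upper semicontinuity of $F$ together with continuity of $x$ is precisely what makes this uniform on a shrinking interval.
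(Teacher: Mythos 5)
Your proof is correct and follows essentially the same route as the paper: extract a viable trajectory $x$ with selector $f^x$, use upper semicontinuity of $F$ together with continuity of $x$ to get $F(t,x(t))\subset F(t_0,x_0(t_0))+B(0,1/n)$ on $[t_0,t_0+\delta_n]$, split $f^x=b_n+p_n$, and take $x_n=x$. The only difference is that you carry out the splitting explicitly via the metric projection onto the closed convex set $F(t_0,x_0(t_0))$, where the paper instead cites Lemma~10.1 and Remark~10.5 of C\^arj\u{a} et al.; your self-contained version is fine since $H$ is a Hilbert space.
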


\begin{proof}
We proceed as in the proof of Theorem~10.1 in \cite{Carja09TAMS}.
Fix $(t_0,x_0,y_0)\in \mathrm{epi}\,u$ with $t_0<T$.
 Since $\mathrm{epi}\,u$ is viable for \eqref{E:Inclusion2}, there is 
 an $x\in\mathcal{X}^F(t_0,x_0)$ with corresponding selector $f^x$ of $F(\cdot,x)$
 such that $(t,x,y_0)\in \mathrm{epi}\,u$  for all $t\in [t_0,T]$.
 Let $n\in\N$. By   upper semicontinuity of $F$ and continuity of $x$, there is a $\delta_n\in (0,1/n]$ such that, for all $t\in [t_0,t_0+\delta_n]$, we have
 \begin{align*}
  F(t,x(t))\subset F(t_0,x_0(t_0))+B(0,1/n).
 \end{align*}
 Thus, by Lemma~10.1 and Remark~10.5, both 
 in \cite{Carja09TAMS}, we have 
  $f^x=b_n+p_n$ a.e.~on {\color{black} $(t_0,T)$} 
  for  some
  {\color{black} $b_n\in L^2(0,T;H)$  and $p_n\in  B(0,1/n)_{L^2}$ with
  $b_n(t)\in F(t_0,x_0)$ a.e.~on $(t_0,t_0+\delta_n)$}.
  Since $(\delta_n,b_n,p_n,x_n)$ with $x_n=x$ satisfies \eqref{E:QTS2}, we have 
\eqref{E:T:Necessary2}.
\end{proof}

\subsection{{\color{black} Sufficient condition for viability for  \eqref{E:Inclusion2}}}
See section~\ref{SS:Proof:Sufficient2} for the proof of the next result.
\begin{theorem}\label{T:Sufficient2}
Let $u:[0,T]\times C([0,T],H)\to\R\cup\{+\infty\}$ 
satisfy \eqref{u:LSC}.
If, for every $(t_0,x_0,y_0)\in \mathrm{epi}\,u$ with $t_0<T$, we have
\begin{align*}
 F(t_0,x_0(t_0))\in \mathcal{QTS}^A_{\mathrm{epi}\,u}(t_0,x_0,y_0),
\end{align*}
then $\mathrm{epi}\,u$ is viable for \eqref{E:Inclusion2}.
\end{theorem}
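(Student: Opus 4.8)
The plan is to exploit that viability of $\mathrm{epi}\,u$ for \eqref{E:Inclusion2} is a pointwise property, fix once and for all a point $(t_0,x_0,y_0)\in\mathrm{epi}\,u$ with $t_0<T$, and construct the desired trajectory as a limit of $\varepsilon$-approximate solutions as $\varepsilon\downarrow 0$, following the scheme of Theorem~10.2 in \cite{Carja09TAMS} but supplying compactness and closure from Lemma~\ref{L:precompact} and Theorem~\ref{T:closure} and lower semicontinuity from \eqref{u:LSC}. Throughout I would assume $\varepsilon\in(0,1]$; then any trajectory solving $x^\prime+A(t,x)=b+p$ with $b(t)\in F(\sigma,x(\sigma))$ and $\abs{p}\le\varepsilon$ a.e.\ satisfies $b+p\in B_{(c_F)+1}(t,x)$ by \eqref{E:HFii} (since $\abs{b(t)}\le c_F(1+\sup_{s\le t}\abs{x(s)})$ and $\varepsilon\le 1\le 1+\sup_{s\le t}\abs{x(s)}$), so every approximate solution lies in $\mathcal{X}^{B_{(c_F)+1}}(t_0,x_0)$ and the a-priori bound of Lemma~\ref{L:apriori} holds uniformly. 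Because $u$ is nonanticipating, a trajectory defined only up to some $\tau\le T$ may be regarded as an element of $\mathcal{X}^{B_{(c_F)+1}}(t_0,x_0)$ by any admissible continuation past $\tau$, without affecting the values $u(s,\cdot)$ at times $s\le\tau$.

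Fix $\varepsilon\in(0,1]$ and set $g_\varepsilon(t):=y_0+\varepsilon(t-t_0)$. I call $(\tau,\sigma,x)$ an \emph{$\varepsilon$-approximate solution on $[t_0,\tau]$} if there are nodes $t_0=s_0<s_1<\cdots$ in $[t_0,\tau]$ with $\sigma$ the step function $\sigma(t)=s_j$ on $[s_j,s_{j+1})$ obeying $t-\sigma(t)\le\varepsilon$, together with measurable $b,p$, such that $x\in C([0,\tau],H)$, $x=x_0$ on $[0,t_0]$, $x\vert_{(t_0,\tau)}\in W_{pq}(t_0,\tau)$, $x^\prime+A(t,x)=b+p$ a.e.\ with $b(t)\in F(\sigma(t),x(\sigma(t)))$ and $\abs{p(t)}\le\varepsilon$ a.e., and $u(s_j,x)\le g_\varepsilon(s_j)$ at every node. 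The single-step engine is the following: given such data on $[t_0,s_j]$ with $s_j<T$, the node inequality $u(s_j,x)\le g_\varepsilon(s_j)$ places $(s_j,x,g_\varepsilon(s_j))$ in $\mathrm{epi}\,u$, so the hypothesis $F(s_j,x(s_j))\in\mathcal{QTS}^A_{\mathrm{epi}\,u}(s_j,x,g_\varepsilon(s_j))$ furnishes, for every $n\ge 1/\varepsilon$ large enough, an increment $\delta\in(0,\min\{\varepsilon,T-s_j\}]$, a control $b$ with $b(t)\in F(s_j,x(s_j))$ a.e.\ on $(s_j,s_j+\delta)$, a perturbation $\abs{p}\le 1/n\le\varepsilon$, and a prolongation $x$ with $u(s_j+\delta,x)\le g_\varepsilon(s_j)+\delta/n\le g_\varepsilon(s_j)+\varepsilon\delta=g_\varepsilon(s_{j+1})$, where $s_{j+1}:=s_j+\delta$. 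Thus one step preserves the invariant, and keeping only the piece on $(s_j,s_{j+1})$ and iterating produces approximate solutions on increasing intervals.

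To reach $T$ I would apply the Br\'ezis--Browder ordering principle (see, e.g., \cite{Carja07book}) to the set $\mathcal{S}_\varepsilon$ of $\varepsilon$-approximate solutions on subintervals $[t_0,\tau]\subset[t_0,T]$, preordered by prolongation and ranked by the increasing functional $\mathcal{N}(\tau,\sigma,x):=\tau$. Two points must be verified. First, the chain hypothesis: along an increasing chain the trajectories agree on overlaps and define $x$ on $[t_0,\tau^\ast)$ with $\tau^\ast:=\sup\tau$; Lemma~\ref{L:apriori} gives a uniform bound forcing $x\vert_{(t_0,\tau^\ast)}\in W_{pq}(t_0,\tau^\ast)$ and hence a continuous extension to $\tau^\ast$, while the node inequalities $u(s_j,x)\le g_\varepsilon(s_j)$ with $s_j\uparrow\tau^\ast$ pass to the limit via \eqref{u:LSC} (using $(s_j,x)\to(\tau^\ast,x)$ in $\mathbf{d}_\infty$) to yield $u(\tau^\ast,x)\le g_\varepsilon(\tau^\ast)$, so the extension lies in $\mathcal{S}_\varepsilon$ and bounds the chain. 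Second, an $\mathcal{N}$-maximal element with endpoint $\bar\tau$ must have $\bar\tau=T$: if $\bar\tau<T$, then $(\bar\tau,\bar x,g_\varepsilon(\bar\tau))\in\mathrm{epi}\,u$ and the single-step engine delivers a strict prolongation with larger $\mathcal{N}$, a contradiction. I expect the first point---certifying that the limit of an approximate solution at an accumulation point of nodes still lies in $\mathrm{epi}\,u$---to be the main obstacle, since this is exactly where \eqref{u:LSC} (weaker than full l.s.c.) and the local-monotonicity compactness of Lemma~\ref{L:precompact} must cooperate; the time-proportional level $g_\varepsilon$ is chosen precisely so the invariant survives such limits without exhausting a fixed slack budget.

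Finally, with $\varepsilon$-approximate solutions $x_\varepsilon$ on $[t_0,T]$ available for all $\varepsilon\in(0,1]$, I would let $\varepsilon\downarrow 0$ along a sequence. As all $x_\varepsilon\in\mathcal{X}^{B_{(c_F)+1}}(t_0,x_0)$, Lemma~\ref{L:precompact} yields a subsequence with $x_\varepsilon\to x$ in $C([0,T],H)$, $b_\varepsilon+p_\varepsilon\xrightarrow{w} f$ in $L^2(t_0,T;H)$, and $x^\prime+A(t,x)=f$ a.e.; since $\abs{p_\varepsilon}\le\varepsilon\to 0$ one has $b_\varepsilon\xrightarrow{w} f$, and since the mesh $t-\sigma_\varepsilon(t)\to 0$ forces $(\sigma_\varepsilon(t),x_\varepsilon(\sigma_\varepsilon(t)))\to(t,x(t))$, the closure argument used in the proof of Theorem~\ref{T:closure} (Lemma~2.6.2 of \cite{Carja07book}, absorbing the vanishing mesh through the upper semicontinuity of $F$ and the convexity and closedness of its values) gives $f(t)\in F(t,x(t))$ a.e., so $x\in\mathcal{X}^F(t_0,x_0)$. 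For the epigraph membership, fix $t\in[t_0,T]$; from $u(\sigma_\varepsilon(t),x_\varepsilon)\le g_\varepsilon(\sigma_\varepsilon(t))\le y_0+\varepsilon(T-t_0)$ and $(\sigma_\varepsilon(t),x_\varepsilon)\to(t,x)$ in $\mathbf{d}_\infty$, with all terms in $[t_0,T]\times\mathcal{X}^{B_{(c_F)+1}}(t_0,x_0)$, condition \eqref{u:LSC} gives $u(t,x)\le\liminf_\varepsilon u(\sigma_\varepsilon(t),x_\varepsilon)\le y_0$, i.e.\ $(t,x,y_0)\in\mathrm{epi}\,u$ for every $t\in[t_0,T]$. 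This is precisely the viability of $\mathrm{epi}\,u$ for \eqref{E:Inclusion2}.
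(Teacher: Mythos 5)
Your proposal is correct and follows essentially the same route as the paper: the paper first proves Proposition~\ref{P:Approximate} (existence of $\eps$-approximate solutions on all of $[t_0,T]$ via the Brezis--Browder principle, with the chain/boundedness step handled by the compactness of Lemma~\ref{L:precompact} and the node inequalities passed to the limit via \eqref{u:LSC}), and then sends $\eps=1/n\downarrow 0$, using Lemma~\ref{L:precompact}, the u.s.c./convex-closed-valued closure of $F$ (Lemma~2.6.2 of \cite{Carja07book}), and \eqref{u:LSC} exactly as in your final paragraph. Your single-step engine, the time-proportional slack $y_0+\eps(t-t_0)$, and the embedding of truncated trajectories into $\mathcal{X}^{B_{(c_F)+1}}(t_0,x_0)$ by admissible continuation all mirror the paper's construction, so the argument is the same in substance.
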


\subsection{{\color{black} Necessary and sufficient condition for viability for  \eqref{E:Inclusion}}}\label{SS:NecSuff}
The next result follows immediately from  Theorems~\ref{T:Necessary2}, \ref{T:Sufficient2}, and Lemma~\ref{L:Ku}.  
\begin{cor}\label{T:Nagumo1}
Let $K$ be a closed subset of $H$. 
Then $K$ is viable if and only if, for every $(t_0,x_0)\in [0,T)\times C([0,T],H)$
with $x_0(t_0)\in K$, we have
\begin{align*}
F(t_0,x_0(t_0))\in \mathcal{QTS}^A_K(t_0,x_0).
\end{align*}
\end{cor}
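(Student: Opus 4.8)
The plan is to deduce the corollary by assembling the three cited ingredients, after encoding the closed set $K$ as an epigraph. First I would introduce the auxiliary function $u(t,x):=-\bfone_K(x(t))$ from Lemma~\ref{L:Ku}. Because $K$ is closed, $u$ is l.s.c.: if $(t_n,x_n)\to(t_0,x_0)$ with respect to $\mathbf{d}_\infty$, then evaluating the uniform convergence of $x_n(\cdot\wedge t_n)$ to $x_0(\cdot\wedge t_0)$ at the endpoint $T$ gives $x_n(t_n)\to x_0(t_0)$ in $H$, so the sublevel sets $\{u\le a\}$ (which for $a\in[-1,0)$ equal $\{(t,x):x(t)\in K\}$) are closed. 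Since any restriction of an l.s.c.\ function is l.s.c., $u$ satisfies the regularity condition \eqref{u:LSC} required to apply Theorem~\ref{T:Sufficient2}.

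Next I would combine the two viability results for the extended system \eqref{E:Inclusion2}. For this particular $u$, Theorem~\ref{T:Necessary2} and Theorem~\ref{T:Sufficient2} together yield the equivalence
\begin{align*}
\text{$\mathrm{epi}\,u$ is viable for \eqref{E:Inclusion2}}
&\iff F(t_0,x_0(t_0))\in\mathcal{QTS}^A_{\mathrm{epi}\,u}(t_0,x_0,y_0)\\
&\qquad\text{for all $(t_0,x_0,y_0)\in\mathrm{epi}\,u$ with $t_0<T$.}
\end{align*}
By Lemma~\ref{L:Ku}(i) the left-hand side is equivalent to viability of $K$ for \eqref{E:Inclusion}. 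Hence it only remains to match the epigraph quasi-tangency condition on the right with the quasi-tangency condition on $K$ appearing in the statement.

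This last translation is where I would invoke Lemma~\ref{L:Ku}(ii), and it is also the one subtle point, which I expect to be the main (if minor) obstacle. Lemma~\ref{L:Ku}(ii) is stated for a single fixed set $E$, whereas here the candidate tangent set $E=F(t_0,x_0(t_0))$ depends on the base point. The resolution is that the proof of Lemma~\ref{L:Ku}(ii) is genuinely pointwise: each of its two implications relates $\mathcal{QTS}^A_K$ and $\mathcal{QTS}^A_{\mathrm{epi}\,u}$ at one and the same base point $(t_0,x_0)$ and never exploits that $E$ is common to several points. Rerunning that argument with $E=F(t_0,x_0(t_0))$ at each point yields the needed equivalence: in the forward direction one distinguishes $x_0(t_0)\in K$ (where $K$-quasi-tangency is upgraded to epi-quasi-tangency) from $x_0(t_0)\notin K$ (where $y_0\ge u(t_0,x_0)=0$ makes the epigraph constraint automatic, and an admissible sequence exists by a standard existence theorem for \eqref{E:Inclusion}); in the backward direction one tests the epigraph condition at $(t_0,x_0,u(t_0,x_0))=(t_0,x_0,-1)$ for $x_0(t_0)\in K$ and reads off $x_n(t_0+\delta_n)\in K$ for large $n$. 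Chaining the three equivalences then proves the corollary.
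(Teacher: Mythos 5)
Your proposal is correct and follows essentially the same route as the paper, which simply declares the corollary an immediate consequence of Theorems~\ref{T:Necessary2} and \ref{T:Sufficient2} together with Lemma~\ref{L:Ku} applied to $u(t,x)=-\bfone_K(x(t))$. Your extra observation that Lemma~\ref{L:Ku}(ii) must be read pointwise because $E=F(t_0,x_0(t_0))$ varies with the base point is a legitimate and correctly resolved refinement of the paper's argument, not a deviation from it.
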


\begin{remark}
 If $K=H$, then, by  Theorem~1.1 of \cite{Liu11Nonlin},  which guarantees existence of solutions for locally monotone evolution equations,
 we have
  $F(t_0,x_0(t_0))\in \mathcal{QTS}^A_K(t_0,x_0)$.
Thus {\color{black}Corollary}~\ref{T:Nagumo1} is also an existence result for evolution inclusions.
\end{remark}

\subsection{{\color{black} $\eps$-approximate solutions}}
The next definition is an appropriate modification
of Definition~12.1 in \cite{Carja09TAMS}
(see also Definition~3 in \cite{Goreac11} and Definition~4.9 in \cite{Keller24} for stochastic cases).

\begin{definition} 
Let $\eps\in (0,1]$,
  $u:[0,T]\times C([0,T],H)\to\R\cup\{+\infty\}$,  
and $(t_0,x_0,y_0)\in\mathrm{epi}\, u$ with $t_0<T$. 
We call  a quintuple $(\tau,\varrho,f,g,x)$
an \emph{$\eps$-approximate solution of \eqref{E:Inclusion2} for $\mathrm{epi}\,u$ starting at $(t_0,x_0,y_0)$}  if the following holds:

(i) $\tau\in (t_0,T]$.

(ii)  $\varrho:[t_0,\tau]\to [t_0,\tau]$ is non-decreasing and we have
 $t-\eps\le\varrho(t)\le t$ for all $t\in [t_0,\tau]$ as well as $\varrho(\tau)=\tau$.

(iii) $f\in L^2({\color{black} 0},T;H)$ {\color{black} with $f(t)=0$ a.e.~on $(\tau,T)$.} 

(iv) $g\in B(0,\eps)_{L^2}$\text{ with $g(t)=0$ a.e.~on $(\tau,T)$.}

(v) $x\in C([0,T],H)$ 
satisfies $x\vert_{(t_0,T)}\in W_{pq}(t_0,T)$ and
\begin{equation}\label{E:Inclusion:Eps}
\begin{split}
x^\prime(t)+A(t,x(t))&=f(t)+g(t)\text{ a.e.~on $(t_0,T)$,}\\
f(t) &\in F({\color{black}\varrho(t)},x(\varrho(t)))
 \text{ a.e.~on $(t_0,\tau)$,}\\
x&=x_0\text{ on $[0,t_0]$}.
\end{split}
\end{equation}

(vi) $u(\varrho(t),x)\le y_0+\eps\cdot(t-t_0)$ for all $t\in [t_0,\tau]$.
\end{definition}

The next result is an adjustment of Lemma~12.1 in \cite{Carja09TAMS}
(see Remark~\ref{R:Approximate} for more details).

\begin{proposition}\label{P:Approximate}
Let $\eps>0$. Let $u:[0,T]\times C([0,T],H)\to\R\cup\{+\infty\}$ 
satisfy \eqref{u:LSC}.
Suppose that, for every $(t,x,y)\in\mathrm{epi}\,u$ with $t<T$, 
\begin{align}\label{E:P:Approximate}
F(t,x(t))\in \mathcal{QTS}^A_{\mathrm{epi}\,u}(t,x,y).
\end{align}
Then, for every $(t_0,x_0,y_0)\in \mathrm{epi}\,u$ with $t_0<T$,
 there 
exists an $\eps$-approximate solution  $(\tau,\varrho,f,g,x)$ of 
\eqref{E:Inclusion2} for $\mathrm{epi}\,u$ starting at $(t_0,x_0,y_0)$ such  that $\tau=T$.
\end{proposition}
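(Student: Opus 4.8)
The plan is to follow the maximal-element (Brezis--Browder ordering) argument used for Lemma~12.1 in \cite{Carja09TAMS}, adapted to the present path-dependent, locally monotone setting. Fix $(t_0,x_0,y_0)\in\mathrm{epi}\,u$ with $t_0<T$ and let $\mathcal{S}$ denote the set of all $\eps$-approximate solutions of \eqref{E:Inclusion2} for $\mathrm{epi}\,u$ starting at $(t_0,x_0,y_0)$. I would preorder $\mathcal{S}$ by declaring $(\tau^1,\varrho^1,f^1,g^1,x^1)\preceq(\tau^2,\varrho^2,f^2,g^2,x^2)$ whenever $\tau^1\le\tau^2$ and the two quintuples agree on $[t_0,\tau^1]$ (that is, $x^1=x^2$ on $[0,\tau^1]$, $f^1=f^2$ and $g^1=g^2$ a.e.~on $(t_0,\tau^1)$, and $\varrho^1=\varrho^2$ on $[t_0,\tau^1)$, the endpoint value being fixed by the normalization $\varrho(\tau)=\tau$). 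The functional $\mathcal{N}(\tau,\varrho,f,g,x):=\tau$ is then increasing along $\preceq$ and bounded above by $T$.

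The engine of the proof is a local extension construction, which I would isolate once and reuse. Given any $(t_\ast,x_\ast,y_\ast)\in\mathrm{epi}\,u$ with $t_\ast<T$, the tangency hypothesis \eqref{E:P:Approximate} provides, through Definition~\ref{D:QTS2}, sequences $\delta_n\downarrow 0$, $b_n\in L^2(0,T;H)$, $p_n\in B(0,1/n)_{L^2}$, and $x_n$ with $x_n^\prime+A(\cdot,x_n)=b_n+p_n$ on $(t_\ast,T)$, $b_n(t)\in F(t_\ast,x_\ast(t_\ast))$ a.e.~on $(t_\ast,t_\ast+\delta_n)$, $x_n=x_\ast$ on $[0,t_\ast]$, and $u(t_\ast+\delta_n,x_n)\le y_\ast+\delta_n/n$. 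Choosing $n$ with $1/n\le\eps$ and $\delta_n\le\min\{\eps,T-t_\ast\}$, I set $\tau:=t_\ast+\delta_n$, freeze $\varrho\equiv t_\ast$ on $[t_\ast,\tau)$ with $\varrho(\tau)=\tau$, put $f:=b_n$ and $g:=p_n$ on $(t_\ast,\tau)$, and continue $x$ past $\tau$ by the homogeneous equation $x^\prime+A(\cdot,x)=0$ (solvable by Theorem~1.1 of \cite{Liu11Nonlin}) with $f=g=0$ there. Conditions (i)--(v) are immediate, and (vi) holds because $u(\varrho(t),x)=u(t_\ast,x_\ast)\le y_\ast$ for $t<\tau$ by nonanticipation, while at $t=\tau$ one has $u(\tau,x_n)\le y_\ast+\delta_n/n\le y_\ast+\eps\delta_n$ precisely because $1/n\le\eps$. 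Applying this at $(t_0,x_0,y_0)$ shows $\mathcal{S}\neq\emptyset$.

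The \textbf{main obstacle} is showing that every increasing sequence $(S_k)_k=((\tau_k,\varrho_k,f_k,g_k,x_k))_k$ in $\mathcal{S}$ has an upper bound in $\mathcal{S}$. Nestedness lets me define $\varrho^\ast,f^\ast,g^\ast,x^\ast$ unambiguously on $[t_0,\tau^\ast)$, where $\tau^\ast:=\lim_k\tau_k$. Since $f_k(t)\in F(\varrho_k(t),x_k(\varrho_k(t)))$ forces $\abs{f_k(t)+g_k(t)}\le(c_F+1)(1+\sup_{s\le t}\abs{x_k(s)})$, each $x_k$ lies in $\mathcal{X}^{B_{(c_F)+1}}(t_0,x_0)$, so Lemma~\ref{L:apriori} gives uniform bounds; hence $\norm{x^\ast}_{W_{pq}(t_0,\tau^\ast)}\le C$ and $x^\ast$ extends continuously to $\tau^\ast$ via the embedding $W_{pq}\hookrightarrow C$, while $f^\ast,g^\ast$ glue to $L^2$ functions and the inclusion $f^\ast(t)\in F(\varrho^\ast(t),x^\ast(\varrho^\ast(t)))$ holds a.e.~on $(t_0,\tau^\ast)$ because it already holds on each $(t_0,\tau_k)$ (Lemma~\ref{L:precompact} and Theorem~\ref{T:closure} may be invoked here as well). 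The delicate point is condition (vi) at $t=\tau^\ast$. After continuing $x^\ast$ past $\tau^\ast$ by the homogeneous equation so that $x^\ast\in\mathcal{X}^{B_{(c_F)+1}}(t_0,x_0)$, I use nonanticipation to rewrite the bound $u(\tau_k,x_k)\le y_0+\eps(\tau_k-t_0)$ as $u(\tau_k,x^\ast)\le y_0+\eps(\tau_k-t_0)$, and then the local lower semicontinuity \eqref{u:LSC} applied along $(\tau_k,x^\ast)\to(\tau^\ast,x^\ast)$ in $\mathbf{d}_\infty$ yields $u(\tau^\ast,x^\ast)\le\liminf_k u(\tau_k,x^\ast)\le y_0+\eps(\tau^\ast-t_0)$. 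Thus the limiting quintuple belongs to $\mathcal{S}$ and bounds $(S_k)_k$; this verification is the technical heart of the argument, and it is exactly where \eqref{u:LSC} (together with nonanticipation) is indispensable.

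Finally, the Brezis--Browder ordering principle (as in the proof of Lemma~12.1 of \cite{Carja09TAMS}) produces an $\mathcal{N}$-maximal element $\bar S=(\bar\tau,\bar\varrho,\bar f,\bar g,\bar x)\in\mathcal{S}$. If $\bar\tau<T$, then condition (vi) at $t=\bar\tau$, with $\bar\varrho(\bar\tau)=\bar\tau$, gives $(\bar\tau,\bar x,\bar y)\in\mathrm{epi}\,u$ for $\bar y:=y_0+\eps(\bar\tau-t_0)$ and $\bar\tau<T$. Applying the local extension construction of the second paragraph at $(\bar\tau,\bar x,\bar y)$ yields a strictly longer $\eps$-approximate solution that agrees with $\bar S$ on $[t_0,\bar\tau]$ (by nonanticipation), contradicting the $\mathcal{N}$-maximality of $\bar S$. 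Hence $\bar\tau=T$, which is the asserted $\eps$-approximate solution with $\tau=T$.
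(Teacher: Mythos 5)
Your argument is correct and follows the same architecture as the paper's proof: the same preorder on the set $\mathcal{S}$ of $\eps$-approximate solutions, the Brezis--Browder principle applied to $\mathcal{N}(\tau,\varrho,f,g,x)=\tau$, and the same three steps (nonemptiness of $\mathcal{S}$ via the tangency hypothesis plus a homogeneous continuation, boundedness of increasing chains, and a contradiction argument extending any $\mathcal{N}$-maximal element with $\tau<T$). The one place where you genuinely diverge is the chain-boundedness step: you build the upper bound by taking the nested union of the $x_k$ on $[t_0,\tau^\ast)$, extending to $\tau^\ast$ via the uniform bound from Lemma~\ref{L:apriori} and the embedding $W_{pq}\hookrightarrow C$, and then continuing past $\tau^\ast$ by solving the homogeneous equation (for which one should cite the gluing result, Theorem~1.16 on p.~6 of \cite{HP2}, to get $x^\ast\vert_{(t_0,T)}\in W_{pq}(t_0,T)$); the paper instead applies the compactness Lemma~\ref{L:precompact} to the full paths on $[0,T]$, which delivers the limit path, its continuation beyond $\tau^\ast$, and the limit equation in a single stroke, at the cost of an extra subsequence and an appeal to the upper semicontinuity of $F$ where your nestedness argument gives the inclusion $f^\ast(t)\in F(\varrho^\ast(t),x^\ast(\varrho^\ast(t)))$ for free. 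Your verification of condition (vi) at $t=\tau^\ast$ via nonanticipation followed by \eqref{u:LSC} along $(\tau_k,x^\ast)\to(\tau^\ast,x^\ast)$ is equivalent to the paper's passage to the limit along $(\tau_{n_k},x_{n_k})$; just make sure to also record the (routine) verification of (vi) at interior times $t<\tau^\ast$, which follows from nestedness and nonanticipation exactly as you indicate for the endpoint.
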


\begin{proof}
Fix $(t_0,x_0,y_0)\in \mathrm{epi}\,u$ with $t_0<T$.
Denote by $\mathcal{S}$ the set of $\eps$-approximate solution  of 
\eqref{E:Inclusion2} for $\mathrm{epi}\,u$ starting at $(t_0,x_0,y_0)$.
Given  $\mathfrak{s}_1=(\tau_1,\varrho_1,f_1,g_1,x_1)$, $\mathfrak{s}_2=(\tau_2,\varrho_2,f_2,g_2,x_2)\in \mathcal{S}$,
 we write $\mathfrak{s}_1\preceq\mathfrak{s}_2$ if $\tau_1\le\tau_2$, $(\varrho_1,x_1)=(\varrho_2,x_2)$ on $[t_0,\tau_1]$,
 and $(f_1,g_1)=(f_2,g_2)$ a.e.~on $(t_0,\tau_1)$. Note that  $\preceq$ defines a preorder on $\mathcal{S}$.
In Steps~2 and 3 below, we shall also use the function $\mathcal{N}:\mathcal{S}\to [t_0,T]$ defined by
$\mathcal{N}(\tau,\varrho,f,g,x):=\tau$. Moreover, we  shall call  $\mathfrak{s}\in\mathcal{S}$ 
an  \emph{$\mathcal{N}$-maximal} element of $\mathcal{S}$
if $\mathfrak{s}\preceq\mathfrak{s}_+\in\mathcal{S}$ yields $\mathcal{N}(\mathfrak{s})=\mathcal{N}(\mathfrak{s}_+)$
(section~2.1 in  \cite{Carja07book}).

\textit{Step 1 (existence of $\eps$-approximate solutions).}
By \eqref{E:P:Approximate} and Definition~\ref{D:QTS2}, there are $\delta\in (0,\eps]$,
$f\in L^2(0,T;H)$,  $g\in B(0,\eps)_{L_2}$, and $x\in C([0,T],H)$ with
$x\vert_{(t_0,T)}\in W_{pq}(t_0,T)$  such that $u(t_0+\delta,x)\le y_0+\eps\delta$ and
\begin{align*}
x^\prime(t)+A(t,x(t))&=f(t)+g(t)\text{ a.e.~on $(t_0,T)$,}\\
f(t)&\in F(t_0,x_0(t_0))\text{ a.e.~on $(t_0,t_0+\delta)$,}\\
x&=x_0\text{ on $[0,t_0]$.} 
\end{align*}
Put $\tau:=t_0+\delta$.
Define $\varrho:[t_0,\tau]\to [t_0,\tau]$ by $\varrho(t):=t_0$ for $t\in [t_0,\tau)$ and
$\varrho(\tau):=\tau$. Finally, consider a solution\footnote{
Such a solution  exists thanks to Theorem~1.1 in \cite{Liu11Nonlin}.
} $\tilde{x}\in C([0,T],H)$ with $\tilde{x}\vert_{(\tau,T)}\in W_{pq}(\tau,T)$  of
\begin{align*}
\tilde{x}^\prime(t)+A(t,\tilde{x}(t))&=0\text{ a.e.~on $(\tau,T)$,}\\
\tilde{x}&=x\text{ on $[0,\tau]$} 
\end{align*}
 Then $(\tau,\varrho,\bfone_{(0,\tau)}\cdot f, \bfone_{(0,\tau)}\cdot g, \bfone_{[0,\tau]}\cdot x+\bfone_{(\tau,T]}\cdot \tilde{x})\in\mathcal{S}$.


\textit{Step 2 (existence of maximal $\eps$-approximate solutions).}
Consider an increasing sequence 
 $(\mathfrak{s}_n)_{n\ge 1}=(\tau_n,\varrho_n,f_n,g_n,x_n)_{n\ge 1}$   in $\mathcal{S}$. 
 We  show  that this sequence is bounded from above.
 Then the Brezis--Browder principle
 (Theorem~2.1.1 in  \cite{Carja07book})
  will yield the existence of an $\mathcal{N}$-maximal
 element in $\mathcal{S}$.

 To establish boundedness of $(\mathfrak{s}_n)_n$, note 
 first that
 \begin{equation}\label{E:Step2:taufg}
 \begin{split}
 \tau_n&\uparrow \tau:=\sup_m\tau_m,\\
 f_n (t)&\to  f(t):=\sum_{m=1}^\infty \bfone_{(\tau_{m-1},\tau_m]}(t)\, f_m(t)\text{ a.e.~on $(0,T)$},\\
 g_n (t)&\to  g(t):=\sum_{m=1}^\infty \bfone_{(\tau_{m-1},\tau_m]}(t)\, g_m(t)\text{ a.e.~on $(0,T)$},
 \end{split}
 \end{equation}
 where $\tau_0:=t_0$.
 Next, note that $x_n\in \mathcal{X}^{B_c}(t_0,x_0)$ with
 $c=\eps+c_F$ for each $n\in\N$ according to \eqref{E:Inclusion:Eps}, \eqref{E:HFii}, 
 \eqref{E:XB0}, and \eqref{E:XB}.
 Thus, by  Lemma~\ref{L:precompact}, $(\mathfrak{s}_n)_n$  has 
  a subsequence $(\mathfrak{s}_{n_k})_k$ such that
 \begin{equation}\label{E:Step2:x:f}
 \begin{split}
 x_{n_k}&\to x\text{ in $C([0,T],H)$ as well as in $W_{pq}(t_0,T)$ and}\\
 f_{n_k}+g_{n_k}&\xrightarrow{w} \tilde{f}\text{ in $L^2(t_0,T;H)$ as $k\to\infty$}
 \end{split}
 \end{equation}
 for some $(x,\tilde{f})\in C([0,T],H)\times L^2(t_0,T;H)$ with $x=x_0$ on $[0,t_0]$,
 $x\vert_{(t_0,T)}\in W_{pq}(t_0,T)$, and $x^\prime(t)+A(t,x(t))=\tilde{f}(t)$ a.e.~on $(t_0,T)$.
 By \eqref{E:Step2:taufg}, $\tilde{f}=f+g$ a.e.~on $(t_0,T)$.
 Next, define $\varrho:[t_0,\tau]\to [t_0,\tau]$ by 
 \begin{align*}
 \varrho(t):=\bfone_{\{t_0\}}(t)\cdot \varrho_1(t)+\sum_{n=1}^\infty \bfone_{(\tau_{n-1},\tau_n]\cap\{\tau\}^c}(t)\cdot\varrho_n(t)
 +\bfone_{\{\tau\}}(t)\cdot \tau.
 \end{align*}
Now, we show that $f(t)\in F(\varrho(t),x(\varrho(t))$ a.e.~on $(t_0,\tau)$. To this end, fix an arbitrary
 $\delta\in (0,\tau-t_0)$ and an $m\in\N$ such that $\tau_m>\tau-\delta$. As 
 $f_n(t)\in F(\varrho_n(t),x(\varrho_n(t))$ a.e.~on $(t_0,\tau-\delta)$ for each $n\ge m$ and as
 $F$ is u.s.c.~and is non-empty, convex, and closed valued, 
we only need, by Lemma~2.6.2 in \cite{Carja07book},
 to show that
 \begin{align}\label{E:Step2:rho:x}
 (\varrho_{n_k}(t),x_{n_k}(\varrho_{n_k}(t))\to(\varrho(t),x(\varrho(t))\text{ a.e.~on $(t_0,\tau-\delta)$ as $k\to\infty$}
 \end{align}
 in order to obtain $f(t)\in F(\varrho(t),x(\varrho(t))$~a.e.~on $(t_0,\tau-\delta)$.
Indeed, as we have  $(\varrho_n(t),x_n(\varrho_n(t))=(\varrho(t),x_n(\varrho(t))$  for every $t\in [t_0,\tau-\delta]$ 
and $n\ge m$,
 \eqref{E:Step2:x:f} yields  \eqref{E:Step2:rho:x}.
Thus, as  $\delta$ was arbitrary in  $(0,\tau-t_0)$,  
$f(t)\in F(\varrho(t),x(\varrho(t))$~a.e.~on $(t_0,\tau)$.
Also note  that  $x_{n_k}=x$ on $[0,\tau_{n_k}]$ for each $k\in\N$, as $(\mathfrak{s}_n)_n$ is increasing
and because of \eqref{E:Step2:x:f}.
 Thus  $u(\varrho(t),x)\le y_0+\eps \cdot(t-t_0)$  for all $t\in [t_0,\tau)$.  
By \eqref{u:LSC} and \eqref{E:HFii},
\begin{align*}
u(\varrho(\tau),x)=u(\tau,x)\le \varliminf_k u(\tau_{n_k},x_{n_k})= \varliminf_k u(\varrho_{n_k}(\tau_{n_k}),x_{n_k})
\le y_0+\eps\cdot(\tau-t_0).
\end{align*}
We  conclude that $\mathfrak{s}:=(\tau,\varrho,f,g,x)\in\mathcal{S}$ and 
$\mathfrak{s}_{n_k} \preceq\mathfrak{s}$ for all $k\in\N$.
Also note that, for each $m\in\N$, there is a $k\in\N$ with $n_k\ge m$ and thus
$\mathfrak{s}_{m} \preceq\mathfrak{s}_{n_k}\preceq\mathfrak{s}$.
Consequently, as pointed out at the beginning of Step~2, there exists an $\mathcal{N}$-maximal
 element in $\mathcal{S}$.


\textit{Step 3 (extension step).}
Let $\mathfrak{s}_0=(\tau_0,\varrho_0,f_0,g_0,x_0)$ be an 
$\mathcal{N}$-maximal element of $\mathcal{S}$ with $\tau_0<T$.
In particular, we have $(\tau_0,x_0,y_0+\eps\cdot(\tau_0-t_0))\in \mathrm{epi}\,u$.
Thus, by \eqref{E:P:Approximate} and Definition~\ref{D:QTS2}, there is a  quadruple
\begin{align*}
(\delta,b,p,x)\in (0,\eps]\times L^2(0,T;H)\times B(0,\eps)_{L^2}\times C([0,T],H)
\end{align*}
with $x\vert_{(\tau_0,T)}\in W_{pq}(t_0,T)$, $x^\prime(t)+A(t,x(t))=b(t)+p(t)$ a.e.~on $(\tau_0,T)$,
$b(t)\in F(\tau_0,x_0(\tau_0))$ a.e.~on $(\tau_0,\tau_0+\delta)$, $x=x_0$ on $[0,\tau_0)$,
and 
\begin{align*}
u(\tau_0+\delta,x)\le y_0+\eps\cdot(\tau_0-t_0)+\eps\cdot\delta.
\end{align*}
Next, just as in Step~1, let $\tilde{x}\in C([0,T],H)$  with $\tilde{x}\vert_{(\tau_0+\delta,T)}\in W_{pq}
(\tau_0+\delta,T)$ be a solution of $\tilde{x}^\prime(t)+A(t,\tilde{x}(t))=0$ a.e.~on $(\tau_0+\delta,T)$
with initial condition $\tilde{x}=x$ on $[0,\tau_0+\delta]$.
Then
\begin{align*}
\mathfrak{s}_0 &\preceq \mathfrak{s}_+:=
(\tau_0+\delta,\bfone_{[t_0,\tau_0]}(t)\cdot\varrho_0+
\bfone_{(\tau_0,\tau_0+\delta)}\cdot \tau_0+\bfone_{\{\tau_0+\delta\}}\cdot (\tau_0+\delta),\\
&\qquad\qquad
\bfone_{(0,\tau_0)}\cdot f+\bfone_{(\tau_0,\tau_0+\delta)}\cdot b,
\bfone_{(0,\tau_0)}\cdot g_0+\bfone_{(\tau_0,\tau_0+\delta)}\cdot p,\\
&\qquad\qquad
\bfone_{[0,\tau_0]}\cdot x_0+\bfone_{(\tau_0,\tau_0+\delta]}\cdot x+\bfone_{(\tau_0+\delta,T]}\cdot \tilde{x})\in\mathcal{S}
\end{align*}
but $\mathcal{N}(\mathfrak{s}_0)\neq \mathcal{N}(\mathfrak{s}_+)$, 
which contradicts the $\mathcal{N}$-maximality of $\mathfrak{s}_0$. Thus, $\tau_0=T$.
\end{proof}

\begin{remark}\label{R:Approximate}
The main difference  of the previous proof   to corresponding places in \cite{Carja09TAMS}
(see section 12 therein)
 can be found in Step~2
of the proof of Proposition~\ref{P:Approximate}, where we use a compactness argument to
obtain an extension of the ``$x$-components" of $(\tau_n,\varrho_n,f_n,g_n,x_n)_n$. 
This is possible thanks to our operator $A$ being coercive, which is not assumed in \cite{Carja09TAMS},
Note  that in \cite{Carja09TAMS} the operator $A$  generates a $C_0$-semigroup. This allows for different arguments
in \cite{Carja09TAMS}.
\end{remark}

\subsection{Proof of Theorem~\ref{T:Sufficient2}}\label{SS:Proof:Sufficient2}
Fix $(t_0,x_0,y_0)\in\mathrm{epi}\,u$ with $t_0<T$. Then,
for each $n\in\N$, there exists, by Proposition~\ref{P:Approximate},
an $(1/n)$-approximate solution $\mathfrak{s}_n=(\tau_n,\varrho_n,f_n,g_n,x_n)$ 
of \eqref{E:Inclusion2} for $\mathrm{epi}\,u$ starting at $(t_0,x_0,y_0)$ with $\tau_n=T$.
Note that
\begin{align}\label{E1:Proof:Sufficient2}
\varrho_{n}(t)&\to t\text{ on $[t_0,T]$  and $g_n(t)\to 0$ a.e.~on $(t_0,T)$.}
\end{align}
Lemma~\ref{L:precompact} yields (cf.~Step~2 of the proof of  Proposition~\ref{P:Approximate})
 the existence of a subsequence 
$(\mathfrak{s}_{n_k})_k$ of $(\mathfrak{s}_n)_n$ such that
$x_{n_k}\to x$ in $C([0,T],H)$ as well as in $W_{pq}(t_0,T)$ and $f_{n_k}+g_{n_k}\xrightarrow{w} f$ in  $L^2(t_0,T;H)$
for some $(x,f)\in C([0,T],H)\times L^2(t_0,T;H)$ with $x\vert_{(t_0,T)}\in W_{pq}(t_0,T)$,
$x=x_0$ on $[0,t_0]$, and $x^\prime(t)+A(t,x(t))=f(t)$ a.e.~on $(t_0,T)$.
We can  invoke now Lemma~2.6.2 in \cite{Carja07book} (cf.~Step~2 of the proof of  Proposition~\ref{P:Approximate})
to obtain $f(t)\in F(t,x(t))$ a.e.~on $(t_0,T)$ because 
$f_{n_k}\xrightarrow{w} f$ in  $L^2(t_0,T;H)$ and $(\varrho_{n_k}(t),x_{n_k}(\varrho_{n_k}(t)))\to (t,x(t))$ a.e.~on $(t_0,T)$
due to  \eqref{E1:Proof:Sufficient2} and to
\begin{equation}\label{E2:Proof:Sufficient2}
\begin{split}
&\norm{x(\cdot\wedge t)-x_{n_k}(\cdot\wedge\varrho_{n_k}(t))}_\infty\\ &\qquad\le
\norm{x(\cdot\wedge t)-x(\cdot\wedge\varrho_{n_k}(t))}_\infty+
\norm{x(\cdot\wedge\varrho_{n_k}(t))-x_{n_k}(\cdot\wedge\varrho_{n_k}(t))}_\infty\\
&\qquad\le \omega(1/n_k)+\norm{x-x_{n_k}}_\infty\to 0\text{ for every $t\in [t_0,T]$,}
\end{split}
\end{equation}
where $\omega$ is a modulus of continuity of $x$. Finally, by  \eqref{u:LSC} and \eqref{E:HFii},
\begin{align*}
 u(t,x)
\le\varliminf_k u(\varrho_{n_k}(t),x_{n_k})\le \varliminf_k [y+(1/n_k)\cdot (t-t_0)]=y,
\end{align*}
as $\mathbf{d}_\infty(\varrho_{n_k}(t),x_{n_k};t,x)\to 0$ due to \eqref{E2:Proof:Sufficient2}.
This concludes the proof. \qed

\section{Hamilton--Jacobi--Bellman equations and optimal control}\label{S:HJB}
In this section, we additionally assume that $F$ has bounded values. 

{\color{black} Fix 
an l.s.c.~\emph{terminal cost function}
 $h:C([0,T],H)\to\R\cup\{+\infty\}$.  We consider the following  optimal control problem. 
 \begin{quotation}
 $\textbf{(OC).}$ Given $(t_0,x_0)\in [0,T)\times C([0,T],H)$, find a solution $\tilde{x}(\cdot)$ of  evolution inclusion \eqref{E:1st}
 with initial condition $\tilde{x}=x_0$ on $[0,t_0]$, i.e., find an element $\tilde{x}\in\mathcal{X}^F(t_0,x_0)$,
 such that
 \begin{align*}
 h(\tilde{x})=\inf\{h(x):\,x\in\mathcal{X}^F(t_0,x_0)\}.
 \end{align*}
 \end{quotation}}
{\color{black} The} \emph{value function} 
$v:[0,T]\times C([0,T],H)\to\R\cup\{+\infty\}$ {\color{black} of  $\textbf{(OC)}$ 
 is} defined by
\begin{align}\label{E:v}
v(t_0,x_0):=\inf\{h(x):\,x\in\mathcal{X}^F(t_0,x_0)\}. 
\end{align}
Formally, $v$ is a solution of the terminal-value problem
\begin{equation}\label{E:TVP}
\begin{split}
&-\partial_t u+\langle A(t,x(t)),\partial_x u\rangle -\inf_{f\in F(t,x(t))} (f,\partial_x u)=0\text{ in $[0,T)\times C([0,T],H)$},\\
&u(T,x)=h(x)\text{ on $C([0,T],H)$.}
\end{split}
\end{equation}

{\color{black} 
\begin{remark}
Fix a topological space $P$, a function $f:[0,T]\times H\times P\to H$, and denote by
$\mathcal{A}$ the set of all Borel measurable functions from $[0,T]$ to $P$.
Consider a Mayer problem in the  
more usual form of finding a control 
$\tilde{a}\in\mathcal{A}$
such that
\begin{align*}
h(x^{t_0,x_0,\tilde{a}})=\inf \{h(x^{t_0,x_0,a}):\, a\in\mathcal{A}\}\quad
\text{given $(t_0,x_0)\in [0,T)\times C([0,T],H)$.} 
\end{align*}
Here, $x=x^{t_0,x_0,a}$ solves 
\begin{align*}
x^\prime(t)+A(t,x(t))&=f(t,x(t),a(t))\text{ a.e.~on $(t_0,T)$},\\
x&=x_0\text{ on $[0,t_0]$}.
\end{align*}
 Such a problem can, under appropriate assumptions, be formulated as our optimal control problem $\textbf{(OC)}$
(for more details, see section~1 in  \cite{Frankowska93SICON} for the finite-dimensional case 
and Chapter IV in \cite{HP2}  for the infinite-dimensional case).  
\end{remark}
}

\subsection{Properties of the value function}
\begin{theorem}\label{T:v}
The following holds:  

 (i) For every  $(t_0,x_0)\in [0,T)\times C([0,T],H)$,
there exists an optimal trajectory $x\in\mathcal{X}^F(t_0,x_0)$, i.e., $v(t_0,x_0)=h(x)$.

 (ii) $v$ satisfies \eqref{u:LSC}.

(iii) $v$ satisfies the dynamic programming principle, i.e., if $0\le t_0<t\le T$, then
\begin{align}\label{E:DPP1}
v(t_0,x_0)&=v(t,\tilde{x}_0)\text{ for some $\tilde{x}_0\in\mathcal{X}^F(t_0,x_0)$ independent of $t$ and}\\ \label{E:DPP2}
v(t_0,x_0)&\le v(t,x)\text{ for all $x\in\mathcal{X}^F(t_0,x_0)$.}
\end{align}
\end{theorem}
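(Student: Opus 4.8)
The plan is to obtain all three parts from the precompactness of the trajectory spaces (Lemma~\ref{L:precompact}), the closure property (Theorem~\ref{T:closure}), lower semicontinuity of the terminal cost $h$, and an elementary concatenation property of $\mathcal{X}^F$. I use throughout that, since $F$ has bounded values, \eqref{E:HFii} gives $\mathcal{X}^F(t_0,x_0)\subset\mathcal{X}^{B_{(c_F)+1}}(t_0,x_0)$ for every base point, and that each trajectory is its own continuation, i.e.\ $x\in\mathcal{X}^F(t,x)$ whenever $x\in\mathcal{X}^F(t_0,x_0)$ and $t_0\le t\le T$. \emph{For part (i)}, fix $(t_0,x_0)$ and take a minimizing sequence $(x_n)_n\subset\mathcal{X}^F(t_0,x_0)$, so $h(x_n)\to v(t_0,x_0)$. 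Each $x_n$ lies in the precompact set $\mathcal{X}^{B_{(c_F)+1}}(t_0,x_0)$, so a subsequence satisfies $x_{n_k}\to x$ in $C([0,T],H)$. Applying Theorem~\ref{T:closure} with $t_\ast=t_0$, $x_\ast=x_0$, to the base-point sequence $(t_0,x_{n_k})_k$ and the trajectories $x_{n_k}\in\mathcal{X}^F(t_0,x_{n_k})$, the limit satisfies $x\in\mathcal{X}^F(t_0,x)=\mathcal{X}^F(t_0,x_0)$ (the equality because $x=x_0$ on $[0,t_0]$, being a uniform limit of functions equal to $x_0$ there). Lower semicontinuity of $h$ then gives $v(t_0,x_0)\le h(x)\le\liminf_k h(x_{n_k})=v(t_0,x_0)$, so $x$ is optimal.

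\emph{For part (ii)}, fix $(t_\ast,x_\ast)$ and a sequence $(t_n,x_n)\to(t_0,x_0)$ in $\mathbf{d}_\infty$ with all points in $[t_\ast,T]\times\mathcal{X}^{B_{(c_F)+1}}(t_\ast,x_\ast)$; after passing to a subsequence I may assume $v(t_n,x_n)\to\liminf_n v(t_n,x_n)$. By part (i) pick optimal $\tilde{x}_n\in\mathcal{X}^F(t_n,x_n)$ with $h(\tilde{x}_n)=v(t_n,x_n)$. By precompactness (Lemma~\ref{L:precompact}) I may also assume $x_n\to\hat{x}$ in $C([0,T],H)$; the $\mathbf{d}_\infty$-convergence forces $\hat{x}=x_0$ on $[0,t_0]$, whence $\mathcal{X}^F(t_0,\hat{x})=\mathcal{X}^F(t_0,x_0)$ and $v(t_0,\hat{x})=v(t_0,x_0)$. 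Now $(t_n,x_n)\to(t_0,\hat{x})$ in $C([0,T],H)$, so Theorem~\ref{T:closure} yields a subsequence of $(\tilde{x}_n)_n$ converging to some $\tilde{x}_0\in\mathcal{X}^F(t_0,\hat{x})$, and lower semicontinuity of $h$ gives $v(t_0,x_0)=v(t_0,\hat{x})\le h(\tilde{x}_0)\le\liminf_k h(\tilde{x}_{n_k})=\liminf_n v(t_n,x_n)$, which is \eqref{u:LSC}.

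\emph{For part (iii)}, the core fact is the nesting $\mathcal{X}^F(t,x)\subset\mathcal{X}^F(t_0,x_0)$ for $x\in\mathcal{X}^F(t_0,x_0)$ and $t_0<t\le T$: any $z\in\mathcal{X}^F(t,x)$ agrees with $x$ on $[0,t]$, hence with $x_0$ on $[0,t_0]$, and solves the inclusion on $(t_0,t)$ (where it equals $x$) and on $(t,T)$ (by definition), so $z\in\mathcal{X}^F(t_0,x_0)$. Taking the infimum of $h$ over the smaller set gives $v(t,x)\ge v(t_0,x_0)$, which is \eqref{E:DPP2}. For \eqref{E:DPP1}, let $\tilde{x}_0\in\mathcal{X}^F(t_0,x_0)$ be the optimizer from part (i); \eqref{E:DPP2} with $x=\tilde{x}_0$ gives $v(t,\tilde{x}_0)\ge v(t_0,x_0)$, while $\tilde{x}_0\in\mathcal{X}^F(t,\tilde{x}_0)$ gives $v(t,\tilde{x}_0)\le h(\tilde{x}_0)=v(t_0,x_0)$, so $v(t,\tilde{x}_0)=v(t_0,x_0)$ with $\tilde{x}_0$ independent of $t$.

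The step requiring the most care is the topological bookkeeping in part (ii): Theorem~\ref{T:closure} is phrased with $C([0,T],H)$-convergence, whereas \eqref{u:LSC} is formulated via the pseudo-metric $\mathbf{d}_\infty$, which only controls the paths up to their current times. Reconciling the two through precompactness and the nonanticipating character of $v$ is the delicate point; parts (i) and (iii) are comparatively routine consequences of the same closure and concatenation machinery.
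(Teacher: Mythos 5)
Your proposal is correct and follows essentially the same route as the paper: compactness/closure of the trajectory sets via Theorem~\ref{T:closure} plus lower semicontinuity of $h$ for (i) and (ii), and the nesting $\mathcal{X}^F(t,x)\subset\mathcal{X}^F(t_0,x_0)$ for (iii). The only (harmless) deviations are that you derive \eqref{E:DPP2} directly from the inclusion of trajectory sets without invoking attainment of the infimum, and in (ii) you insert the auxiliary limit $\hat{x}$ to reconcile $\mathbf{d}_\infty$-convergence with $C([0,T],H)$-convergence, a point the paper passes over silently.
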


\begin{proof}
(i) follows from the compactness of $\mathcal{X}^F(t_0,x_0)$ 
(Theorem~\ref{T:closure})
and from $h$ being l.s.c.

(ii) Fix $(t_\ast,x_\ast)\in [0,T]\times C([0,T],H)$ and 
$(t_0,x_0)\in [t_\ast,T]\times \mathcal{X}^{{\color{black}B_{(c_F)+1}}}(t_\ast,x_\ast)$.
Let $(t_n,x_n)_n$ be a sequence in 
$ [t_\ast,T]\times \mathcal{X}^{{\color{black}B_{(c_F)+1}}}(t_\ast,x_\ast)$
 that converges to $(t_0,x_0)$.
By part~(i), there is a sequence $(\tilde{x}_n)_n$ with $\tilde{x}_n\in\mathcal{X}^F(t_n,x_n)$ such that
$v(t_n,x_n)=h(\tilde{x}_n)$ for each $n\in\N$. By Theorem~\ref{T:closure}, this sequence has a subsequence
$(\tilde{x}_{n_k})_k$ that converges to some $\tilde{x}_0\in\mathcal{X}^F(t_0,x_0)$. Hence, thanks to
$h$ being l.s.c.~and \eqref{E:v}, 
\begin{align*}
\varliminf_k v(t_{n_k},x_{n_k})=\varliminf_k h(\tilde{x}_{n_k})\ge h(\tilde{x}_0)\ge v(t_0,x_0),
\end{align*}
i.e., $v$ satisfies \eqref{u:LSC}.

(iii) By part~(i),  $v(t_0,x_0)=h(\tilde{x}_0)$ for some $\tilde{x}_0\in\mathcal{X}^F(t_0,x_0)$ that does not depend on $t$.
Let $x\in \mathcal{X}^F(t_0,x_0)$. Then, again by part~(i), we have
$v(t,x)=h(\tilde{x})$ for some $\tilde{x}\in\mathcal{X}^F(t,x)\subset\mathcal{X}^F(t_0,x_0)$. Thus, by \eqref{E:v},
$v(t_0,x_0)=h(\tilde{x}_0)\le h(\tilde{x})$, i.e., \eqref{E:DPP2} holds. Finally, note that, by part~(i),
$v(t,\tilde{x}_0)=h(\tilde{x}_1)$ for some $\tilde{x}_1\in\mathcal{X}^F(t,\tilde{x}_0)$. Since $\tilde{x}_0\in\mathcal{X}^F(t,\tilde{x}_0)$,
we have, by \eqref{E:v}, $v(t,\tilde{x}_0)=h(\tilde{x}_1)\le h(\tilde{x}_0)=v(t_0,x_0)$. Together with \eqref{E:DPP2}, \eqref{E:DPP1} follows.
\end{proof}


\subsection{Quasi-contingent solutions}
We define  the \emph{$A$-contingent epiderivatives} $D^A_\uparrow$
of a function $u:[0,T]\times C([0,T],H)\to\R\cup\{+\infty\}$ at a point $(t,x)\in\mathrm{dom}\,u$ with $t<T$ in a multi-valued
direction $E\subset H$ by\footnote{Recall \eqref{E:XE} for the definition of $\mathcal{X}^{E+B(0,\eps)}$.}
\begin{align*}
D^A_\uparrow\, u(t,x)\,(E):=\sup_{\eps>0}\,
\inf&\Biggl\{
\frac{u(t+\delta,\tilde{x})-u(t,x)}{\delta}:
\delta\in (0,\eps],
\,\tilde{x}\in\mathcal{X}^{E+B(0,\eps)}(t,x)\Biggr\}. 
\end{align*}


\begin{remark}\label{R1:lowerUpper}
Similar (second-order) contingent epiderivatives with  directions being sets of stochastic processes
have been introduced in  Definition~5.1 of \cite{Keller24}. Also note
that a finite-dimensional counterpart of $D^A_\uparrow\, u(t,x)\,(E)$ with $A=0$ plays an
important role in \cite{GP23JFA} (see subsection~4.2 therein) in order to establish equivalence
between viscosity and minimax solutions for path-dependent Hamilton--Jacobi equations.
Consider also the slightly different  \emph{lower and upper derivatives} 
\begin{align*}
d^A_-\,u(t,x)\,(E)&:=\sup_{\eps>0}\,
\inf\left\{
\varliminf_{\delta\downarrow 0}\frac{u(t+\delta,\tilde{x})-u(t,x)}{\delta}:\,\tilde{x}\in\mathcal{X}^{E+B(0,\eps)}(t,x)\right\},\\
d^A_+\,u(t,x)\,(E)&:=\inf_{\eps>0}\,
\sup\left\{
\varlimsup_{\delta\downarrow 0}\frac{u(t+\delta,\tilde{x})-u(t,x)}{\delta}:\,\tilde{x}\in\mathcal{X}^{E+B(0,\eps)}(t,x)\right\},
\end{align*}
which are more closely linked to path derivatives
(see Remark~4.2 in \cite{GP23JFA} and (8.4) in \cite{Lukoyanov03a}).
Appropriate counterparts of $d^A_-$ and $d^A_+$ (in finite dimensions with $A=0$)
 are used in the theory of minimax solutions of path-dependent 
Hamilton--Jacobi equations (see, e.g., 
\cite{GLP21AMO,GP23JFA, Lukoyanov06_DiffIneq,Lukoyanov03a}).
In this work, we mainly use 
$D^A_\uparrow$ 
 because
of their connection to quasi-contingent sets (see Lemma~\ref{L:Epiderivative} below, which more or less  corresponds 
 to the relationship of contingent derivatives with contingent cones  in Proposition~6.1.4 of \cite{AubinFrankowska_setValued}).
\end{remark}

\begin{lemma}\label{L:Epiderivative}
Let 
$u:[0,T]\times C([0,T],H)\to\R\cup\{+\infty\}$. Then, for every
$(t_0,x_0)\in\mathrm{dom}\,u$ with $t_0<T$, we have 
\begin{align*}
D^A_\uparrow\,u(t_0,x_0)\,(F(t_0,x_0(t_0)))\le 0\Longleftrightarrow F(t_0,x_0(t_0))\in\mathcal{QTS}^A_{\mathrm{epi}\,u}(t_0,x_0,u(t_0,x_0)).
\end{align*}
\end{lemma}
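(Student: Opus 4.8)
The plan is to use the monotonicity of the inner infimum in the scale parameter $\eps$ to reduce the inequality $D^A_\uparrow u(t_0,x_0)(F(t_0,x_0(t_0)))\le 0$ to a one-sided estimate at every scale $\eps>0$, and then to match these estimates against the defining sequences of quasi-tangency. Write $E:=F(t_0,x_0(t_0))$ and $y_0:=u(t_0,x_0)$, and for $\eps>0$ set
\[
I(\eps):=\inf\Bigl\{\tfrac{u(t_0+\delta,\tilde x)-y_0}{\delta}:\ \delta\in(0,\eps],\ \tilde x\in\mathcal{X}^{E+B(0,\eps)}(t_0,x_0)\Bigr\}.
\]
As $\eps$ decreases, both the range $(0,\eps]$ for $\delta$ and the admissible set $\mathcal{X}^{E+B(0,\eps)}(t_0,x_0)$ shrink, so $I$ is nonincreasing in $\eps$; hence $D^A_\uparrow u(t_0,x_0)(E)=\sup_{\eps>0}I(\eps)\le 0$ if and only if $I(\eps)\le 0$ for every $\eps>0$. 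This reformulation is the bridge between the two sides of the equivalence.

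For the forward implication, suppose $I(1/n)\le 0$ for every $n\in\N$. Since $I(1/n)\le 0<1/n$, there are $\delta_n\in(0,1/n]$ and $x_n\in\mathcal{X}^{E+B(0,1/n)}(t_0,x_0)$, with associated forcing $f_n\in(E+B(0,1/n))_{L^2}$ satisfying $x_n'+A(\cdot,x_n)=f_n$ a.e.\ on $(t_0,T)$, such that $u(t_0+\delta_n,x_n)\le y_0+\delta_n/n$. Because $E$ is nonempty, closed, and convex by \textbf{H}($F$), the metric projection $P_E:H\to E$ is well defined and $1$-Lipschitz; setting $b_n:=P_E\circ f_n$ and $p_n:=f_n-b_n$ gives a measurable splitting with $b_n(t)\in E$ and $|p_n(t)|=\mathrm{dist}(f_n(t),E)\le 1/n$ a.e., so $b_n\in L^2(0,T;H)$ and $p_n\in B(0,1/n)_{L^2}$. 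Passing to a subsequence so that $\delta_n\downarrow 0$, the data $(\delta_n,b_n,p_n,x_n)$ meet every requirement of Definition~\ref{D:QTS2}, whence $E\in\mathcal{QTS}^A_{\mathrm{epi}\,u}(t_0,x_0,y_0)$.

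For the converse, let $(\delta_n),(b_n),(p_n),(x_n)$ be sequences witnessing $E\in\mathcal{QTS}^A_{\mathrm{epi}\,u}(t_0,x_0,y_0)$. Fix $\eps>0$ and take $n$ so large that $1/n\le\eps$ and $\delta_n\le\eps$. On $(t_0,t_0+\delta_n)$ the forcing $b_n+p_n$ of $x_n$ lies in $E+B(0,1/n)\subseteq E+B(0,\eps)$, but it need not be controlled afterwards; I therefore fix $e_0\in E$ and let $\tilde x_n$ coincide with $x_n$ on $[0,t_0+\delta_n]$ and solve $\tilde x_n'+A(\cdot,\tilde x_n)=e_0$ on $(t_0+\delta_n,T)$ with $\tilde x_n(t_0+\delta_n)=x_n(t_0+\delta_n)$, which exists by Theorem~1.1 of \cite{Liu11Nonlin}. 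Then $\tilde x_n\in\mathcal{X}^{E+B(0,\eps)}(t_0,x_0)$, and since $\tilde x_n=x_n$ on $[0,t_0+\delta_n]$ and $u(t_0+\delta_n,\cdot)$ is determined by the restriction to $[0,t_0+\delta_n]$ (nonanticipativity of $u$), we obtain $u(t_0+\delta_n,\tilde x_n)=u(t_0+\delta_n,x_n)\le y_0+\delta_n/n$, so the corresponding difference quotient is at most $1/n$. Hence $I(\eps)\le 1/n$ for all such $n$, and letting $n\to\infty$ yields $I(\eps)\le 0$; as $\eps>0$ was arbitrary, $D^A_\uparrow u(t_0,x_0)(E)\le 0$.

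I expect the main obstacle to be the converse direction: a quasi-tangent trajectory carries admissible forcing only on the short interval $(t_0,t_0+\delta_n)$, so turning it into a bona fide element of $\mathcal{X}^{E+B(0,\eps)}(t_0,x_0)$ requires re-solving the equation on $(t_0+\delta_n,T)$ (existence from \cite{Liu11Nonlin}) together with the nonanticipating property of $u$, which guarantees that discarding the uncontrolled tail leaves the value $u(t_0+\delta_n,\cdot)$ unchanged. The only other delicate point is the measurable decomposition $f_n=b_n+p_n$ in the forward direction, which is precisely what the continuity of the metric projection onto the closed convex set $E$ provides.
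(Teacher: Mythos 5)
Your proof is correct and follows essentially the same route as the paper: split the forcing of a near-optimal trajectory into an $E$-valued part plus an $L^2$-small remainder for the forward implication, and for the converse extend the quasi-tangent trajectory past $t_0+\delta_n$ by re-solving the equation with forcing from $E$ (existence via Theorem~1.1 of \cite{Liu11Nonlin}) and invoke nonanticipativity of $u$. The only cosmetic difference is that you build the decomposition $f_n=b_n+p_n$ explicitly via the metric projection onto the closed convex set $F(t_0,x_0(t_0))$, where the paper cites Lemma~10.1 of \cite{Carja09TAMS}.
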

\begin{proof}
First, let $D^A_\uparrow\,u(t_0,x_0)\,(F(t_0,x_0(t_0)))\le 0$, i.e.,  for each $n\in\N$, there are $x_n\in\mathcal{X}^{F(t_0,x_0(t_0))+B(0,1/n)}(t_0,x_0)$ and 
$\delta_n\in (0,1/n]$ with 
$u(t_0+\delta_n,x_n)\le u(t_0,x_0)+\delta_n/n$.
Then, together with Lemma~10.1 
 of \cite{Carja09TAMS} (cf.~the proof of Theorem~\ref{T:Necessary2}),  we can deduce that, for each $n\in\N$, we have
 $x_n^\prime(t)+A(t,x_n(t))=b_n(t)+p_n(t)$~a.e.~on $(t_0,T)$ for some $b_n\in F(t_0,x_0(t_0))_{L_2}$
 and $p_n\in B(0,1/n)_{L_2}$. Thus, recalling Definition~\ref{D:QTS2}, we can see that
 $F(t_0,x_0(t_0))\in\mathcal{QTS}^A_{\mathrm{epi}\,u}(t_0,x_0,u(t_0,x_0))$ holds.

Next, assume that 
 $F(t_0,x_0(t_0))\in\mathcal{QTS}^A_{\mathrm{epi}\,u}(t_0,x_0,u(t_0,x_0))$
holds with corresponding sequences $(\delta_n)_n$, $(b_n)_n$, $(p_n)_n$,
and $(x_n)_n$ from Definition~\ref{D:QTS2},
which satisfy \eqref{E:QTS2}   with $E=F(t_0,x_0(t_0))$ and $y_0=u(t_0,x_0)$.
Fix an arbitrary $f\in F(t_0,x_0(t_0))$. Next, for each $n\in\N$,
 fix an arbitrary $x_n^f\in\mathcal{X}^{\{f\}}(t_0+\delta_n,x_n)$
(which is possible by Theorem~1.1 in \cite{Liu11Nonlin}).
Then, for each $n\in\N$, the functions
\begin{align*}
\tilde{b}_n&:=\bfone_{[t_0,t_0+\delta_n]}\,b_n+\bfone_{[0,t_0)\cup(t_0+\delta_n,T]}\,f\quad\text{and}\quad
\tilde{x}_n:=\bfone_{[0,t_0+\delta_n]}\,x_n+\bfone_{(t_0+\delta_n,T]}\,x^f_n
\end{align*}
satisfy
  $\tilde{b}_n+p_n\in (F(t_0,x_0(t_0))+B(0,1/n))_{L^2}$ and, by   Theorem~1.16  on p.~6 in \cite{HP2}, 
   $\tilde{x}_n\in\mathcal{X}^{F(t_0,x_0(t_0))+B(0,1/n)}(t_0,x_0)$, i.e., 
   $D^A_\uparrow\,u(t_0,x_0)\,(F(t_0,x_0(t_0)))\le 0$ 
by \eqref{E:QTS2}.
\end{proof}

\begin{definition}\label{D:QC:sol}
Let $u:[0,T]\times C([0,T],H)\to\R\cup\{+\infty\}$ be a function.

(i) We call $u$ a \emph{quasi-contingent supersolution of \eqref{E:TVP}} if $u$ 
satisfies \eqref{u:LSC}, 
 $u(T,\cdot)\ge h$, and, for all
$(t_0,x_0)\in \mathrm{dom}\,u$ with $t_0<T$, we have
\begin{align}\label{E:QC:Super}
D^A_\uparrow u(t_0,x_0)\,(F(t_0,x_0(t_0)))\le 0.
\end{align} 


(ii) We call $u$ an \emph{l.s.c.\footnote{Note that only appropriate restrictions of  $u$ are required
to be l.s.c. For details, see \eqref{u:LSC}.
}~quasi-contingent subsolution of \eqref{E:TVP}} if $u$ 
satisfies \eqref{u:LSC},
$u(T,\cdot)\le h$, and, for all
$(t_\ast,x_\ast)\in [0,T)\times C([0,T],H)$,  $t_0\in (t_\ast,T]$, and $x_0\in\mathcal{X}^{F(t_\ast,x_\ast)}(t_\ast,x_\ast)$ with
$(t_0,x_0)\in \mathrm{dom}\,u$, we have
\begin{align}\label{E:QC:Sub}
\varliminf_{\delta\downarrow 0}\frac{u(t_0-\delta,x_0)-u(t_0,x_0)}{\delta}\le 0.
\end{align} 


(iii) We call $u$ an \emph{l.s.c.~ quasi-contingent solution of \eqref{E:TVP}} if $u$ is a quasi-contingent super- and an l.s.c.~quasi-contingent subsolution of \eqref{E:TVP}.
\end{definition}

\begin{theorem}\label{T:value:contingent}
The value function $v$ is an l.s.c.~quasi-contingent solution of \eqref{E:TVP}.
\end{theorem}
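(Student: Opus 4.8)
The plan is to read off every requirement of Definition~\ref{D:QC:sol} from the three properties of $v$ collected in Theorem~\ref{T:v}, using the dynamic programming principle as the engine. Two requirements are immediate: $v$ satisfies \eqref{u:LSC} by Theorem~\ref{T:v}(ii), and since $\mathcal{X}^F(T,x)=\{x\}$ we have $v(T,\cdot)=h$, so both boundary inequalities $v(T,\cdot)\ge h$ and $v(T,\cdot)\le h$ hold. It then remains to verify the supersolution inequality \eqref{E:QC:Super} and the subsolution inequality \eqref{E:QC:Sub}, and for both I would start from Theorem~\ref{T:v}(iii).

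For the supersolution property, the plan is to show first that $\mathrm{epi}\,v$ is viable for \eqref{E:Inclusion2} and then to invoke the necessary condition already proved. Fix $(t_0,x_0,y_0)\in\mathrm{epi}\,v$ with $t_0<T$, so $y_0\ge v(t_0,x_0)$. By \eqref{E:DPP1} there is a single optimal trajectory $\tilde{x}_0\in\mathcal{X}^F(t_0,x_0)$, independent of $t$, with $v(t,\tilde{x}_0)=v(t_0,x_0)\le y_0$ for every $t\in[t_0,T]$; hence $(t,\tilde{x}_0,y_0)\in\mathrm{epi}\,v$ for all such $t$, which is exactly viability of $\mathrm{epi}\,v$ for \eqref{E:Inclusion2}. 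Theorem~\ref{T:Necessary2} then yields $F(t_0,x_0(t_0))\in\mathcal{QTS}^A_{\mathrm{epi}\,v}(t_0,x_0,y_0)$ at every point of $\mathrm{epi}\,v$ with $t_0<T$; specializing to $y_0=v(t_0,x_0)$ at a point $(t_0,x_0)\in\mathrm{dom}\,v$ and applying Lemma~\ref{L:Epiderivative} converts this quasi-tangency into $D^A_\uparrow v(t_0,x_0)(F(t_0,x_0(t_0)))\le 0$, which is \eqref{E:QC:Super}.

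For the subsolution property, the driving idea is the monotonicity built into \eqref{E:DPP2}: along genuine trajectories of \eqref{E:Inclusion} the value function does not decrease, so if $\xi$ is a genuine solution that coincides with the given $x_0$ on $[0,t_0-\delta]$, then $\xi\in\mathcal{X}^F(t_0-\delta,x_0)$ and \eqref{E:DPP2} forces $v(t_0-\delta,x_0)\le v(t_0,\xi)$, making the backward difference quotient automatically $\le 0$. For the trajectory $x_0\in\mathcal{X}^{F(t_\ast,x_\ast)}(t_\ast,x_\ast)$ appearing in \eqref{E:QC:Sub} I would localize to the shrinking intervals $[t_0-\delta,t_0]$: solve the genuine inclusion on $(t_0-\delta,T)$ with initial datum $x_0\vert_{[0,t_0-\delta]}$ and a selection chosen close to the control of $x_0$, obtaining $\xi_\delta\in\mathcal{X}^F(t_0-\delta,x_0)$ with $\xi_\delta\vert_{[0,t_0]}\to x_0\vert_{[0,t_0]}$ in $C([0,T],H)$ as $\delta\downarrow 0$; then combine the inequality $v(t_0-\delta,x_0)\le v(t_0,\xi_\delta)$ with the a-priori bounds of Lemma~\ref{L:apriori}, the compactness of Lemma~\ref{L:precompact}, and the closure statement of Theorem~\ref{T:closure} to pass to the limit and extract a sequence $\delta_n\downarrow 0$ along which the quotient remains $\le 0$ up to an $o(1)$ error.

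I expect this last step to be the main obstacle. The difficulty is that the trajectory $x_0$ in \eqref{E:QC:Sub} need not itself be a genuine solution of the inclusion on $[t_0-\delta,t_0]$, so \eqref{E:DPP2} cannot be applied to $x_0$ directly; one must approximate it by genuine trajectories $\xi_\delta$ while keeping $v(t_0,\xi_\delta)$ comparable to $v(t_0,x_0)$, and this has to be carried out with only the one-sided regularity \eqref{u:LSC} at hand (no upper semicontinuity of $v$). It is precisely the weak form of the conclusion --- a $\liminf$ that is merely $\le 0$, with room for an $o(\delta)$ error --- together with the uniform estimates of Lemma~\ref{L:apriori} and the upper semicontinuity of $F$ on the vanishing interval $[t_0-\delta,t_0]$, that I would rely on to push the argument through.
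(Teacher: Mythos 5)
Your regularity and terminal-condition observations are fine, and your supersolution argument is correct but routed differently from the paper: you first establish viability of $\mathrm{epi}\,v$ for \eqref{E:Inclusion2} from \eqref{E:DPP1}, then invoke Theorem~\ref{T:Necessary2} and convert the resulting quasi-tangency into \eqref{E:QC:Super} via Lemma~\ref{L:Epiderivative}. The paper instead verifies \eqref{E:QC:Super} directly, estimating $D^A_\uparrow v(t_0,x_0)(F(t_0,x_0(t_0)))$ along the optimal trajectory from \eqref{E:DPP1} after splitting its selector as $f^x=b_n+p_n$ with $b_n\in F(t_0,x_0(t_0))_{L^2}$ and $p_n\in B(0,1/n)_{L^2}$ on a short interval (exactly the decomposition used in the proof of Theorem~\ref{T:Necessary2}). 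Your route reuses already-proved machinery and is essentially an inlined-versus-modular version of the same argument; either is acceptable.

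The genuine gap is in the subsolution part. You assert that the trajectory $x_0$ appearing in \eqref{E:QC:Sub} ``need not itself be a genuine solution of the inclusion on $[t_0-\delta,t_0]$'' and therefore build an approximation scheme by genuine trajectories $\xi_\delta$. This premise is wrong: in Definition~\ref{D:QC:sol}~(ii) the point $x_0$ is taken from the trajectory space based at $(t_\ast,x_\ast)$ with $t_0>t_\ast$ (compare Theorem~\ref{T:u:QC:sub} and Definition~\ref{D:Viscosity}~(ii), where the same hypothesis is written as $x_0\in\mathcal{X}^F(t_\ast,x_\ast)$ with a selector $f^{x_0}(t)\in F(t,x_0(t))$). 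Hence for every $\delta\in(0,t_0-t_\ast)$ one has $x_0\in\mathcal{X}^F(t_0-\delta,x_0)$ outright, and \eqref{E:DPP2} applied with initial time $t_0-\delta$ and terminal time $t_0$ gives $v(t_0-\delta,x_0)\le v(t_0,x_0)$ for all such $\delta$, so the backward difference quotient in \eqref{E:QC:Sub} is nonpositive termwise --- this is the paper's one-line proof. Your substitute argument, besides being unnecessary, cannot be closed as described: from $v(t_0-\delta,x_0)\le v(t_0,\xi_\delta)$ you would need $\varlimsup_{\delta\downarrow 0} v(t_0,\xi_\delta)\le v(t_0,x_0)$ to conclude, and that is an upper-semicontinuity statement, which \eqref{u:LSC} does not provide; the compactness results (Lemmas~\ref{L:apriori}, \ref{L:precompact}, Theorem~\ref{T:closure}) only help in the lower-semicontinuous direction. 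So as written, the subsolution step does not go through; replace it with the direct application of \eqref{E:DPP2}.
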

\begin{proof}
(i) (Regularity). By Theorem~\ref{T:v}, $v$ satisfies \eqref{u:LSC}.

(ii) (Quasi-contingent supersolution property).  We proceed very similarly to the proof of Theorem~8.1 in \cite{Lukoyanov03a}.
Fix $(t_0,x_0)\in\mathrm{dom}\,v$ with $t_0<T$. By \eqref{E:DPP1}, there is
an $x\in\mathcal{X}^F(t_0,x_0)$ with selector $f^x$ such that
$\varliminf_{\delta\downarrow 0} \delta^{-1}\,[v(t_0+\delta,x)-v(t_0,x_0)]\le 0$.
Since, in addition, for every $n\in\N$, there is a $\delta_n\in (0,1/n]$ such
that $f^x=b_n+p_n$ a.e.~on $(t_0,t_0+\delta_n)$ for some $b_n\in F(t_0,x_0(t_0))_{L^2}$ and $p_n\in B(0,1/n)_{L^2}$
(cf.~the proof of Theorem~\ref{T:Necessary2}),
we can deduce that there is a sequence $(x_n)_n$ such that, for every $n\in\N$, we have 
$x_n\in\mathcal{X}^{F(t_0,x_0(t_0))+B(0,1/n)}(t_0+\delta_n,x)$ 
and  $x_n\in\mathcal{X}^{F(t_0,x_0(t_0))+B(0,1/n)}(t_0,x_0)$ as well. Note that  $x_n=x$ on $[0,t_0+\delta_n]$. Hence, 
\begin{align*}
&\sup_{n\in\N}\,\inf\left\{\frac{v(t_0+\delta,\tilde{x})-v(t_0,x_0)}{\delta}:\,0<\delta<\frac{1}{n},\,{\tilde{x}\in\mathcal{X}^{F(t_0,x_0(t_0))+B(0,1/n)}(t_0,x_0)}
\right\}\\
&\le\lim_{n\to\infty}\,\inf\left\{
\varliminf_{\delta\downarrow 0}\frac{v(t_0+\delta,\tilde{x})-v(t_0,x_0)}{\delta}:\,{\tilde{x}\in\mathcal{X}^{F(t_0,x_0(t_0))+B(0,1/n)}(t_0,x_0)}
\right\}\\
&\le \varlimsup_{n\to\infty}
\varliminf_{\delta\downarrow 0}\frac{v(t_0+\delta,x_n)-v(t_0,x_0)}{\delta}=\varlimsup_{n\to\infty}
\varliminf_{\delta\downarrow 0}\frac{v(t_0+\delta,x)-v(t_0,x_0)}{\delta}\le 0, 
\end{align*}
which yields  \eqref{E:QC:Super}.

(iii) (L.s.c.~quasi-contingent subsolution property). By \eqref{E:DPP2}, we have \eqref{E:QC:Sub}.
\end{proof}

\begin{theorem}\label{T:u:QC:super}
Let $u$ be a quasi-contingent supersolution of \eqref{E:TVP}. Let $(t_0,x_0)\in \mathrm{dom}\,u$. 
Then there is an $x\in\mathcal{X}^F(t_0,x_0)$ 
with  $u(t,x)\le u(t_0,x_0)$ for all $t\in [t_0,T]$.
\end{theorem}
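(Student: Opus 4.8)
The plan is to recognize this theorem as a direct consequence of the sufficiency result Theorem~\ref{T:Sufficient2}, once the supersolution inequality has been reformulated as a quasi-tangency condition on the entire epigraph. First I would dispose of the trivial case $t_0=T$, where $\mathcal{X}^F(T,x_0)=\{x_0\}$ and the assertion holds with $x=x_0$ (only $t=T$ has to be checked). So from now on assume $t_0<T$.

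The central step is to translate the defining supersolution inequality via Lemma~\ref{L:Epiderivative}: at every $(t',x')\in\mathrm{dom}\,u$ with $t'<T$, the inequality \eqref{E:QC:Super} is equivalent to
\[
F(t',x'(t'))\in\mathcal{QTS}^A_{\mathrm{epi}\,u}(t',x',u(t',x')).
\]
This only yields quasi-tangency at the lowest point of each epigraph fiber, whereas Theorem~\ref{T:Sufficient2} requires it at every $(t',x',y')\in\mathrm{epi}\,u$ with $t'<T$. To bridge this gap I would use the same fiber-monotonicity observation that already appears in the proof of Lemma~\ref{L:Ku}(ii). Indeed, if $(t',x',y')\in\mathrm{epi}\,u$ with $t'<T$, then $u(t',x')\le y'<\infty$, so $(t',x')\in\mathrm{dom}\,u$; the sequences $(\delta_n)_n$, $(b_n)_n$, $(p_n)_n$, $(x_n)_n$ witnessing $E=F(t',x'(t'))\in\mathcal{QTS}^A_{\mathrm{epi}\,u}(t',x',u(t',x'))$ satisfy $u(t'+\delta_n,x_n)\le u(t',x')+\delta_n/n\le y'+\delta_n/n$ because $y'\ge u(t',x')$, and therefore the very same sequences witness $F(t',x'(t'))\in\mathcal{QTS}^A_{\mathrm{epi}\,u}(t',x',y')$.

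With the quasi-tangency condition thus verified at every point of $\mathrm{epi}\,u$ whose first coordinate is below $T$, and since $u$ satisfies \eqref{u:LSC} by hypothesis, Theorem~\ref{T:Sufficient2} guarantees that $\mathrm{epi}\,u$ is viable for \eqref{E:Inclusion2}. I would finish by specializing viability to the point $(t_0,x_0,u(t_0,x_0))\in\mathrm{epi}\,u$: this produces an $x\in\mathcal{X}^F(t_0,x_0)$ with $(t,x,u(t_0,x_0))\in\mathrm{epi}\,u$ for all $t\in[t_0,T]$, which is exactly $u(t,x)\le u(t_0,x_0)$ on $[t_0,T]$. I do not anticipate a serious obstacle; the only step needing care is the fiber-monotonicity upgrade, which is what allows the contingent inequality known only on the graph of $u$ to feed the viability theorem on the full epigraph.
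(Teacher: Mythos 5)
Your proof is correct and follows essentially the same route as the paper's: translate \eqref{E:QC:Super} into quasi-tangency via Lemma~\ref{L:Epiderivative}, feed Theorem~\ref{T:Sufficient2}, and specialize viability of $\mathrm{epi}\,u$ to the point $(t_0,x_0,u(t_0,x_0))$. If anything, you are more careful than the paper, which asserts quasi-tangency at every $(t,x,y)\in\mathrm{epi}\,u$ directly from the lemma without spelling out the fiber-monotonicity upgrade from $y=u(t,x)$ to general $y\ge u(t,x)$ that you make explicit.
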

\begin{proof}
For every $(t,x,y)\in\mathrm{epi}\,u$ with $t<T$, we have
$D^A_\uparrow u(t,x)\,(F(t,x(t)))\le 0$, which, by Lemma~\ref{L:Epiderivative}, implies
 $F(t,x(t))\in\mathcal{QTS}^A_{\mathrm{epi}\,u}(t,x,y)$.
 Thus Theorem~\ref{T:Sufficient2} yields the viability of $\mathrm{epi}\,u$ for \eqref{E:Inclusion2}.
 Taking $y=u(t_0,x_0)$ concludes the proof.
\end{proof}

\begin{theorem}\label{T:u:QC:sub}
Let $u$ be an l.s.c.~{\color{black}quasi-}contingent 
subsolution of \eqref{E:TVP}. Let $(t_0,x_0)\in \mathrm{dom}\,u$. 
Then   $u(t,x)\ge u(t_0,x_0)$ for all $t\in [t_0,T]$ and  $x\in\mathcal{X}^F(t_0,x_0)$.
\end{theorem}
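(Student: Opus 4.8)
The plan is to prove the subsolution statement by a continuation/propagation argument along any given trajectory, mirroring the structure of Theorem~\ref{T:u:QC:super} but now using the backward-in-time subsolution inequality \eqref{E:QC:Sub}. Fix $(t_0,x_0)\in\mathrm{dom}\,u$ and $x\in\mathcal{X}^F(t_0,x_0)$. The key observation is that the subsolution inequality \eqref{E:QC:Sub} is stated at a terminal point $(t_0,x_0)$ along a trajectory emanating from an earlier point $(t_\ast,x_\ast)$, and it asserts that $u$ cannot strictly increase as one moves backward in time along the dynamics. Reinterpreted forward, this says precisely that $u$ is nondecreasing along trajectories. So I would fix $t\in(t_0,T]$ and aim to show $u(t,x)\ge u(t_0,x_0)$.

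First I would reduce to a local statement. Since $x\in\mathcal{X}^F(t_0,x_0)$, for any $s\in[t_0,t)$ the restriction of $x$ to later times is again a trajectory starting at $(s,x|_{[0,s]})$, and $x|_{[0,s]}\in\mathcal{X}^{F(t_\ast,x_\ast)}(t_\ast,x_\ast)$ type conditions required in Definition~\ref{D:QC:sol}(ii) are met by taking $(t_\ast,x_\ast)=(t_0,x_0)$ and the point $(s,x)$ as the terminal data. The natural approach is to define $\phi(s):=u(s,x)$ for $s\in[t_0,t]$ (restricting attention to $s$ where this is finite, i.e. $(s,x)\in\mathrm{dom}\,u$) and show $\phi$ is nondecreasing. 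At each such $s>t_0$ the inequality \eqref{E:QC:Sub} applied to the terminal point $(s,x)$ with the trajectory $x$ (viewed as starting at $t_0$) gives $\varliminf_{\delta\downarrow 0}\delta^{-1}[u(s-\delta,x)-u(s,x)]\le 0$, i.e. a lower Dini derivative condition forcing $\phi$ not to decrease as $s$ increases.

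The main technical obstacle will be upgrading this pointwise Dini condition into the global monotonicity $\phi(t_0)\le\phi(t)$, especially because $u$ is only l.s.c.\ in the weak sense \eqref{u:LSC} and may take the value $+\infty$, so $\phi$ need not be continuous. The clean way is a standard contradiction/comparison argument: suppose $u(t,x)<u(t_0,x_0)$, pick $\lambda$ strictly between them, and consider $s^\ast:=\sup\{s\in[t_0,t]:u(s,x)\ge u(t_0,x_0)\}$ (nonempty since $t_0$ belongs to the set). Lower semicontinuity of $s\mapsto u(s,x)$ along the fixed trajectory—which follows from \eqref{u:LSC} since the whole path $x$ lies in $\mathcal{X}^{B_{(c_F)+1}}(t_0,x_0)$—would give $u(s^\ast,x)\ge u(t_0,x_0)$ and $s^\ast<t$. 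But then applying \eqref{E:QC:Sub} at a point just to the right of $s^\ast$ where $u(\cdot,x)$ has dropped below $u(t_0,x_0)$ yields a backward difference quotient that is strictly positive along some sequence $\delta_k\downarrow 0$, contradicting $\varliminf\le 0$.

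Thus the heart of the proof is this contradiction argument combining the defining inequality \eqref{E:QC:Sub} with the l.s.c.\ regularity \eqref{u:LSC} restricted to the single trajectory $x$; the remaining verifications (that $x|_{[0,s]}$ qualifies as an admissible $x_0$ in Definition~\ref{D:QC:sol}(ii), and that domain-finiteness is preserved along the relevant portion of the trajectory) are routine consequences of the definitions of $\mathcal{X}^F$ and $\mathrm{dom}\,u$. I expect the only delicate point to be handling the possibility that $u(\cdot,x)$ is finite at $t_0$ but jumps to $+\infty$ at intermediate times; here the l.s.c.\ property prevents downward jumps and the Dini inequality prevents the relevant upward obstruction, so the supremum defining $s^\ast$ behaves as required.
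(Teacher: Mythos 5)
Your overall strategy---propagating the backward Dini inequality \eqref{E:QC:Sub} along the fixed trajectory $x$ and exploiting the lower semicontinuity of $s\mapsto u(s,x)$ coming from \eqref{u:LSC} (indeed $x\in\mathcal{X}^{B_{(c_F)+1}}(t_0,x_0)$)---is the right one, and the paper itself only delegates to Lemma~8.2 of \cite{BK22SICON}, whose argument is of exactly this continuation type. But the contradiction step as you set it up fails, for two reasons. First, with $s^\ast:=\sup\{s\in[t_0,t]:u(s,x)\ge u(t_0,x_0)\}$, lower semicontinuity does \emph{not} give $u(s^\ast,x)\ge u(t_0,x_0)$: l.s.c.\ closes \emph{sublevel} sets, not superlevel sets; from $s_k\uparrow s^\ast$ with $u(s_k,x)\ge u(t_0,x_0)$ you only get $u(s^\ast,x)\le\varliminf_k u(s_k,x)$, which is an upper bound and hence useless. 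Second, even granting $s^\ast\in S$, applying \eqref{E:QC:Sub} at a point $s$ just to the right of $s^\ast$ yields no contradiction: $\varliminf_{\delta\downarrow 0}\delta^{-1}[u(s-\delta,x)-u(s,x)]\le 0$ only asserts that \emph{some} sequence $\delta_k\downarrow 0$ makes the quotient small, and those $\delta_k$ may all satisfy $s-\delta_k>s^\ast$, where $u$ has already dropped below $u(t_0,x_0)$; nothing forces the backward quotient to be bounded below by a positive constant for \emph{all} small $\delta$, which is what a contradiction with a $\varliminf$ would require. (A minor further point: Definition~\ref{D:QC:sol}(ii) as printed quantifies over $x_0\in\mathcal{X}^{F(t_\ast,x_\ast)}(t_\ast,x_\ast)$, the frozen-constraint class, whereas your $x$ lies in the moving-constraint class $\mathcal{X}^F(t_0,x_0)$; the paper's other proofs make clear the moving-constraint reading is intended, so I treat this as notation, not a gap on your side.)

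The repair is to run the continuation backward from $t$ on an $\eps$-tilted \emph{sublevel} set, where l.s.c.\ works in the right direction. Assume without loss of generality $u(t,x)<+\infty$, fix $\eps>0$, and set $E_\eps:=\{s\in[t_0,t]:\,u(s,x)\le u(t,x)+\eps\,(t-s)\}$ and $\sigma:=\inf E_\eps$ (note $t\in E_\eps$). Taking $s_k\downarrow\sigma$ in $E_\eps$ and using l.s.c.\ of $s\mapsto u(s,x)$ gives $u(\sigma,x)\le\varliminf_k u(s_k,x)\le u(t,x)+\eps\,(t-\sigma)$, so $\sigma\in E_\eps$ and in particular $(\sigma,x)\in\mathrm{dom}\,u$. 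If $\sigma>t_0$, then \eqref{E:QC:Sub} applied at the terminal point $(\sigma,x)$ (with $(t_\ast,x_\ast)=(t_0,x_0)$) produces arbitrarily small $\delta\in(0,\sigma-t_0]$ with $u(\sigma-\delta,x)\le u(\sigma,x)+\eps\delta\le u(t,x)+\eps\,(t-(\sigma-\delta))$, i.e.\ $\sigma-\delta\in E_\eps$, contradicting the definition of $\sigma$. Hence $\sigma=t_0$ and $u(t_0,x_0)=u(t_0,x)\le u(t,x)+\eps\,(t-t_0)$ by nonanticipativity; letting $\eps\downarrow 0$ finishes the proof. With this replacement your outline becomes a complete argument.
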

\begin{proof}
One can proceed (nearly exactly) as in the proof of in Lemma~8.2 in \cite{BK22SICON}.
\end{proof}



\begin{theorem}\label{T:u:QC:comparison}
Let $u_-$ be an  l.s.c.~quasi-contingent subsolution
of \eqref{E:TVP}
and let $u_+$ be a quasi-contingent supersolution of \eqref{E:TVP}.
Then $u_-\le u_+$.
\end{theorem}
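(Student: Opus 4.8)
The plan is to prove $u_-\le u_+$ by combining the two ``viability-type'' consequences of the sub- and supersolution properties that have already been established, namely Theorems~\ref{T:u:QC:super} and \ref{T:u:QC:sub}, with the terminal conditions. Fix an arbitrary $(t_0,x_0)\in\mathrm{dom}\,u_-$ with $t_0<T$ (if $u_-(t_0,x_0)=+\infty$ there is nothing to prove, and the case $t_0=T$ is handled directly by the terminal conditions, see below). The key idea is to use the supersolution $u_+$ to generate a \emph{single} trajectory along which $u_+$ does not increase, and then to use the subsolution $u_-$ to control $u_-$ from below along that \emph{same} trajectory.

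The main steps, in order, are as follows. First, I would like to reduce to the case $(t_0,x_0)\in\mathrm{dom}\,u_+$ as well; if $u_+(t_0,x_0)=+\infty$ the inequality $u_-(t_0,x_0)\le u_+(t_0,x_0)$ is trivial, so assume $(t_0,x_0)\in\mathrm{dom}\,u_+$. Second, apply Theorem~\ref{T:u:QC:super} to the supersolution $u_+$ at $(t_0,x_0)$: this yields an $x\in\mathcal{X}^F(t_0,x_0)$ such that
\begin{align*}
u_+(t,x)\le u_+(t_0,x_0)\qquad\text{for all }t\in [t_0,T].
\end{align*}
In particular, taking $t=T$ and using the terminal condition $u_+(T,\cdot)\ge h$ gives
\begin{align*}
h(x)\le u_+(T,x)\le u_+(t_0,x_0).
\end{align*}
Third, apply Theorem~\ref{T:u:QC:sub} to the subsolution $u_-$ at the \emph{same} point $(t_0,x_0)$ and along this \emph{same} trajectory $x\in\mathcal{X}^F(t_0,x_0)$: it gives $u_-(t_0,x_0)\le u_-(t,x)$ for all $t\in [t_0,T]$, and evaluating at $t=T$ together with the terminal condition $u_-(T,\cdot)\le h$ yields
\begin{align*}
u_-(t_0,x_0)\le u_-(T,x)\le h(x).
\end{align*}
Chaining the three displays, $u_-(t_0,x_0)\le h(x)\le u_+(t_0,x_0)$, which is the desired conclusion. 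The case $t_0=T$ is immediate: $u_-(T,x_0)\le h(x_0)\le u_+(T,x_0)$ directly from the two terminal conditions.

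The step I expect to require the most care is the compatibility between the two applications of Theorems~\ref{T:u:QC:super} and \ref{T:u:QC:sub}: Theorem~\ref{T:u:QC:sub} controls $u_-$ along \emph{every} admissible trajectory in $\mathcal{X}^F(t_0,x_0)$, whereas Theorem~\ref{T:u:QC:super} produces only \emph{one} specific trajectory $x$ on which $u_+$ is nonincreasing, so one must make sure that this particular $x$ is eligible for the subsolution estimate. This is exactly why the subsolution inequality in Theorem~\ref{T:u:QC:sub} is stated for \emph{all} $x\in\mathcal{X}^F(t_0,x_0)$ rather than for a distinguished trajectory, and so no further argument is needed beyond checking $x\in\mathcal{X}^F(t_0,x_0)$, which holds by construction. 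A minor point to verify is the domain bookkeeping: if $u_-(t_0,x_0)=+\infty$ there is nothing to prove, and otherwise the finiteness of $h(x)$ forced by the supersolution chain guarantees that the subsolution estimate is applied at a finite value, so the inequalities compose without ambiguity.
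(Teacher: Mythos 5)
Your argument is correct and is essentially identical to the paper's own proof: apply Theorem~\ref{T:u:QC:super} to $u_+$ to obtain one trajectory $x\in\mathcal{X}^F(t_0,x_0)$ with $u_+(T,x)\le u_+(t_0,x_0)$, then chain $u_-(t_0,x_0)\le u_-(T,x)\le h(x)\le u_+(T,x)\le u_+(t_0,x_0)$ via Theorem~\ref{T:u:QC:sub} and the terminal conditions. The only quibble is your parenthetical that ``if $u_-(t_0,x_0)=+\infty$ there is nothing to prove,'' which is backwards --- the genuinely trivial case is $u_+(t_0,x_0)=+\infty$, which you do handle correctly --- but this does not affect the main chain of inequalities.
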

\begin{proof}
Let $(t_0,x_0)\in [0,T]\times C([0,T],H)$.
By Theorem~\ref{T:u:QC:super}, there is an $x\in\mathcal{X}^F(t_0,x_0)$  with
$u_+(T,x)\le u_+(t_0,x_0)$.  Thus, by  Definition~\ref{D:QC:sol} and Theorem~\ref{T:u:QC:sub},  
\begin{align*}
u_-(t_0,x_0)\le u_-(T,x)\le h(x)\le u_+(T,x)\le u_+(t_0,x_0)
\end{align*}
This concludes the proof.
\end{proof}

{\color{black} The next result follows immediately from Theorems~\ref{T:value:contingent} and \ref{T:u:QC:comparison}.
\begin{cor}
The value function $v$ is the unique l.s.c.~quasi-contingent solution of \eqref{E:TVP}.
\end{cor}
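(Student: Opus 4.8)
The plan is to deduce the corollary directly from the existence statement in Theorem~\ref{T:value:contingent} together with the comparison principle of Theorem~\ref{T:u:QC:comparison}, the latter applied in both directions. The existence half is already in hand: Theorem~\ref{T:value:contingent} asserts that the value function $v$ is an l.s.c.~quasi-contingent solution of \eqref{E:TVP}, so only uniqueness remains to be shown.

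For uniqueness I would fix an arbitrary l.s.c.~quasi-contingent solution $w$ of \eqref{E:TVP} and exploit that, by Definition~\ref{D:QC:sol}(iii), being a solution means being simultaneously a quasi-contingent supersolution and an l.s.c.~quasi-contingent subsolution. The same dual role is played by $v$ on account of Theorem~\ref{T:value:contingent}. The idea is then to invoke the comparison theorem twice. Applying Theorem~\ref{T:u:QC:comparison} with $u_-=v$ (in its subsolution role) and $u_+=w$ (in its supersolution role) gives $v\le w$; applying it once more with the roles reversed, i.e.~$u_-=w$ and $u_+=v$, gives $w\le v$. Combining the two inequalities yields $v=w$, which is precisely the claimed uniqueness.

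Since the argument is essentially bookkeeping, there is no genuine analytic obstacle; the only point deserving care is structural. The notion of solution in Definition~\ref{D:QC:sol}(iii) is set up exactly so that every solution is at once a sub- and a supersolution, and this is what legitimizes both applications of Theorem~\ref{T:u:QC:comparison}. In particular, both $v$ and $w$ satisfy the regularity condition \eqref{u:LSC} by definition, so all hypotheses of the comparison theorem are met in each of the two directions, and no further verification is needed.
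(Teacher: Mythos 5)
Your argument is correct and is exactly what the paper intends: the corollary is stated as following immediately from Theorem~\ref{T:value:contingent} (which gives existence) and Theorem~\ref{T:u:QC:comparison} (the comparison principle, applied in both directions to give uniqueness). Your write-up simply spells out the two applications of the comparison theorem that the paper leaves implicit.
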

}

\subsection{Viscosity solutions}\label{SS:Viscosity}
First, we introduce spaces of smooth functions on path spaces.
To this end, we rephrase Definition~2.16 in \cite{BK18JFA}.

\begin{definition} 
Fix $t_0\in [0,T)$. We denote by  $\mathcal{C}_V^{1,1}([t_0,T]\times C([0,T],H))$ the set of 
all continuous functions $\varphi:[t_0,T]\times C([0,T],H)\to\R$ for which there exist
continuous functions $\partial_t\varphi:[t_0,T]\times C([0,T],H)\to\R$
and $\partial_x\varphi:[t_0,T]\times C([0,T],H)\to H$, which we call 
\emph{path derivatives} of $\varphi$, such that, for each
$t_1$, $t_2\in [t_0,T]$ with $t_1<t_2$, and  each $x\in C([0,T],H)$
with $x\vert_{(t_1,t_2)}\in W_{pq}(t_1,t_2)$, we have $x(t)\in V$ implies $\partial_t v(t,x(t))\in V$ a.e.~on $(t_1,t_2)$
and
\begin{align*}
\varphi(t_2,x)-\varphi(t_1,x)=\int_{t_1}^{t_2} \partial_t\varphi(t,x)+\langle x^\prime(t),\partial_x\varphi(t,x)\rangle\,dt.
\end{align*}
\end{definition}

Now, we are able to define test function spaces needed for our definition of viscosity solutions.
\begin{definition}
Given $E\subset H$, $u:[0,T]\times C([0,T],H)\to\R\cup\{+\infty\}$, and $(t_0,x_0)\in [0,T)\times C([0,T],H)$, put
\begin{align*}
\overline{\Phi}_+^E\,u(t_0,x_0)&:=\{\varphi\in \mathcal{C}_V^{1,1}([t_0,T]\times C([0,T],H)):\\
&\qquad\qquad\exists\eps>0:\,\forall t\in [t_0,t_0+\eps]:\,\forall x\in\mathcal{X}^{E+B(0,\eps)}(t_0,x_0):\\
&\qquad\qquad\qquad\qquad 0=(\varphi-u)(t_0,x_0)\ge (\varphi-u)(t,x)\},\\
\overline{\Phi}_-\,u(t_0,x_0)&:=\{\varphi\in \mathcal{C}_V^{1,1}([0,t_0]\times C([0,T],H)):\,
\exists\eps>0:\,\forall t\in [t_0-\eps,t_0]:\\ 
&\qquad\qquad\qquad\qquad 0=(\varphi-u)(t_0,x_0)\ge (\varphi-u)(t,x_0)\}.
\end{align*}
\end{definition}

\begin{definition}\label{D:Viscosity}
Let $u:[0,T]\times C([0,T],H)\to\R\cup\{+\infty\}$ be a function.

(i) We call $u$ a \emph{viscosity supersolution} of  
\begin{equation}\label{E:plusTVP}
\begin{split}
&-\partial_t u+\langle A(t,x(t)),\partial_x u\rangle -\inf_{f\in F(t,x(t))} (f,\partial_x u)=0\text{ on $[0,T)\times C([0,T],H)$}
\end{split}
\end{equation}
if $u$ satisfies \eqref{u:LSC} and,
 for every
$(t_0,x_0)\in [0,T)\times C([0,T],H)$  with  $(t_0,x_0)\in\mathrm{dom}\,u$ and every test function 
$\varphi\in\overline{\Phi}_+^{F(t_0,x_0(t_0))}\,u(t_0,x_0)$ with corresponding number  
$\eps>0$,
there exists 
 an $x\in\mathcal{X}^{F(t_0,x_0(t_0))+B(0,\eps)}(t_0,x_0)$ such that 
\begin{equation}\label{E:Viscosity:Super}
\begin{split}
-\partial_t \varphi(t_0,x_0)&+\varliminf_{\delta\downarrow 0}\frac{1}{\delta}
\int_{t_0}^{t_0+\delta} \langle A(t,x(t)),\partial_x\varphi(t,x)\rangle\,dt
\\&
-\inf_{f\in F(t_0,x_0(t_0))} (f,\partial_x\varphi(t_0,x_0))\ge 0.
\end{split}
\end{equation}

(ii) We call $u$ a  \emph{viscosity supersolution}  of
\begin{equation}\label{E:minusTVP}
\begin{split}
&\partial_t u-\langle A(t,x(t)),\partial_x u\rangle +\inf_{f\in F(t,x(t))} (f,\partial_x u)=0\text{ on $(0,T]\times C([0,T],H)$}
\end{split}
\end{equation}
if $u$  satisfies \eqref{u:LSC} and,  
for all
$(t_\ast,x_\ast)\in [0,T)\times C([0,T],H)$,   all $t_0\in (t_\ast,T]$, all $x_0\in\mathcal{X}^F(t_\ast,x_\ast)$ with selector $t\mapsto f^{x_0}(t)\in F(t,x(t))$
and  with $(t_0,x_0)\in\mathrm{dom}\,u$, 
and  all $\varphi\in\overline{\Phi}_-\,
u(t_0,x_0)$, we have 
\begin{align}\label{E:Viscosity:SuperMinus}
\partial_t \varphi(t_0,x_0)&+\varlimsup_{\delta\downarrow 0}\frac{1}{\delta}
\int_{t_0-\delta}^{t_0} \langle -A(t,x_0(t))+f^{x_0}(t),\partial_x\varphi(t,x_0)\rangle\,dt
\ge 0. 
\end{align}

(iii) We call $u$  
an \emph{l.s.c.\footnote{Note that only appropriate restrictions of  $u$ are required
to be l.s.c. For details, see \eqref{u:LSC}.
}~viscosity solution}  (or \emph{bilateral supersolution}) of \eqref{E:TVP}
if $u(T,\cdot)=h$ and if $u$ is a viscosity supersolution of \eqref{E:plusTVP} as well as of
\eqref{E:minusTVP}.
\end{definition}

\begin{remark}
Roughly speaking, satisfying the  viscosity supersolution property  for \eqref{E:minusTVP} with the test function spaces
$\overline{\Phi}_-\,u(t_0,x_0)$ can be thought of as
satisfying the viscosity supersolution property for all linear equations
of the form
\begin{align*}
\partial_t u(t_0,x_0)-\langle A(t_0,x_0(t_0)),\partial_x u(t_0,x_0)\rangle + (f,\partial_x u(t_0,x_0))=0
\end{align*}
for every $f\in F(t_0,x_0(t_0))$, $(t_0,x_0)\in (0,T]\times C([0,T],H)$, i.e., formally
\begin{align*}
\partial_t u(t_0,x_0)-\langle A(t_0,x_0(t_0)),\partial_x u(t_0,x_0)\rangle + 
\inf_{f\in F(t_0,x_0(t_0))}(f,\partial_x u(t_0,x_0))\ge 0
\end{align*}
holds in the viscosity sense. 
Hence, identifying the selectors $f^x$ in \eqref{E:XF} with admissible controls $t\mapsto a(t)$, we can say that
for an l.s.c.~viscosity solution $u$ of \eqref{E:TVP}  the following holds
in the viscosity sense for suitably defined 
 operators $\mathcal{L}_a^+$ and $\mathcal{L}_a^-$:
\begin{align*}
-\inf_{a\in F(t_0,x(t_0))}\mathcal{L}_a^+\,u(t_0,x_0)&\ge 0,\qquad (t_0,x_0)\in [0,T)\times C([0,T],H),\\
\mathcal{L}_a^- u(t_0,x_0)&\ge 0\qquad\text{for all $t\mapsto a(t)\in F(t,x_0(t))$,}\\
&\qquad\qquad\text{$(t_0,x_0)\in (t_\ast,T]\times\mathcal{X}^F(t_\ast,x_\ast)$,}\\
&\qquad\qquad\text{$(t_\ast,x_\ast)\in [0,T)\times C([0,T],H)$}
\end{align*}
(cf.~Definition 6.5 and Remark 6.6 in \cite{Keller24}).
\end{remark}

\begin{remark}\label{R:DPP}
{\color{black}
Typically (see \cite{BardiCapuzzoDolcetta}),
 bilateral supersolutions are related to a backward dynamic programming principle. E.g., consider
a problem  of the form 
\begin{align*}
\tilde{v}(t_0,x_0):=\inf\{\tilde{h}(x^{t_0,x_0,a}(T)):\,a\in\mathcal{A}),\quad(t_0,x_0)\in [0,T]\times\R,
\end{align*}
where $x=x^{t_0,x_0,a}$ solves $x^\prime(t)=f(t,x(t),a(t))$ on $(t_0,T)$ with initial condition $x(t_0)=x_0$. Then
$\tilde{v}(t_0,x_0)\le\tilde{v}(t,x^{t_0,x_0,a}(t))$ for all admissible controls $a\in\mathcal{A}$ and $t\in [t_0,T]$. Similarly, a backward version holds, i.e.,
$\tilde{v}(t,x_-^{t_0,x_0,a}(t)))\le\tilde{v}(t_0,x_0))$ for all $a\in\mathcal{A}$ and $t\in [0,t_0]$, where 
$x_-=x_-^{t_0,x_0,a}$ solves  $x_-^\prime(t)=f(t,x_-(t),a(t))$ on $(0,t_0)$ with terminal condition $x_-(t_0)=x_0$.
This backward version leads to viscosity supersolutions of the corresponding HJB equation (but with opposite sign).
In our context, the situation is slightly different. First, due to the operator $A$, we cannot expect solutions of backward evolution equations to exist, in general.
Second, in the path-dependent case, the terminal condition for a backward evolution equation with history-dependent data
means that $x^{t_0,x_0,a}\vert_{[0,t_0]}=x_0\vert_{[0,t_0]}$ should hold,\footnote{Note that here $x_0\in C([0,T],H)$.} which means
that we should only consider ``terminal data" $x_0$ that  already satisfy our evolution equation at least on some time interval $(t_0-\delta,t_0)$. This
naturally leads to 
Definition~\ref{D:Viscosity}~(ii) with the
test function spaces $\overline{\Phi}_-\,u(t_0,x_0)$.
}
\end{remark}

\subsubsection{Viscosity solutions: Comparison principle}

\begin{theorem}\label{T:QCsuper:Visc}
Let $u$ be viscosity supersolution of \eqref{E:plusTVP} with $u(T,\cdot)\ge h$. Then $u$
is a quasi-contingent supersolution of \eqref{E:TVP}.
\end{theorem}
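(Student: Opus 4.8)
The goal is to show that a viscosity supersolution $u$ of \eqref{E:plusTVP} satisfying $u(T,\cdot)\ge h$ is a quasi-contingent supersolution of \eqref{E:TVP}. Since both notions require \eqref{u:LSC} and the terminal inequality $u(T,\cdot)\ge h$ by hypothesis, the only thing left to verify is the contingent supersolution inequality \eqref{E:QC:Super}, namely that $D^A_\uparrow u(t_0,x_0)\,(F(t_0,x_0(t_0)))\le 0$ for every $(t_0,x_0)\in\mathrm{dom}\,u$ with $t_0<T$. The plan is to argue by contradiction: assume \eqref{E:QC:Super} fails at some point.

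\textbf{Reduction via a test function.} Suppose $D^A_\uparrow u(t_0,x_0)\,(F(t_0,x_0(t_0)))>0$. By definition of $D^A_\uparrow$ as a supremum over $\eps>0$ of an infimum, this means there is a number $\theta>0$ such that, for \emph{every} $\eps>0$, the infimum over $\delta\in(0,\eps]$ and $\tilde{x}\in\mathcal{X}^{F(t_0,x_0(t_0))+B(0,\eps)}(t_0,x_0)$ of the difference quotient is at least $\theta$; in other words, for all such $\delta,\tilde{x}$ we have $u(t_0+\delta,\tilde{x})-u(t_0,x_0)\ge\theta\delta$. First I would use this quantitative one-sided bound to manufacture a smooth test function $\varphi\in\overline{\Phi}_+^{F(t_0,x_0(t_0))}\,u(t_0,x_0)$ whose time derivative beats $\theta$. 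The natural candidate is $\varphi(t,x):=u(t_0,x_0)+\tfrac{\theta}{2}(t-t_0)$, which is constant in $x$ (hence trivially in $\mathcal{C}_V^{1,1}$ with $\partial_t\varphi\equiv\tfrac{\theta}{2}$ and $\partial_x\varphi\equiv 0$), satisfies $(\varphi-u)(t_0,x_0)=0$, and, because $u(t,\tilde{x})\ge u(t_0,x_0)+\theta(t-t_0)>\varphi(t,\tilde{x})$ along every admissible $\tilde{x}$, also satisfies the touching-from-below condition $(\varphi-u)(t,\tilde{x})\le 0$ for $t$ close to $t_0$ and $\tilde{x}\in\mathcal{X}^{F(t_0,x_0(t_0))+B(0,\eps)}(t_0,x_0)$ with $\eps$ small. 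Thus $\varphi\in\overline{\Phi}_+^{F(t_0,x_0(t_0))}\,u(t_0,x_0)$ with some admissible $\eps$.

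\textbf{Invoking the viscosity inequality.} With this $\varphi$ in hand, the viscosity supersolution property \eqref{E:Viscosity:Super} supplies an $x\in\mathcal{X}^{F(t_0,x_0(t_0))+B(0,\eps)}(t_0,x_0)$ with
\begin{align*}
-\partial_t\varphi(t_0,x_0)+\varliminf_{\delta\downarrow 0}\frac{1}{\delta}\int_{t_0}^{t_0+\delta}\langle A(t,x(t)),\partial_x\varphi(t,x)\rangle\,dt-\inf_{f\in F(t_0,x_0(t_0))}(f,\partial_x\varphi(t_0,x_0))\ge 0.
\end{align*}
Since $\partial_x\varphi\equiv 0$, the last two terms vanish and this collapses to $-\tfrac{\theta}{2}\ge 0$, a contradiction. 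This yields \eqref{E:QC:Super} and completes the argument. A key technical point I would be careful about is the exact logic of the quantifiers in $D^A_\uparrow$: its value exceeding zero must be translated into the correct uniform lower bound $u(t_0+\delta,\tilde{x})-u(t_0,x_0)\ge\theta\delta$ valid for all small $\eps$ simultaneously, which is precisely what a positive supremum-of-infima guarantees, so that the constructed $\varphi$ genuinely lies in $\overline{\Phi}_+^{F(t_0,x_0(t_0))}\,u(t_0,x_0)$.

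\textbf{Main obstacle.} I expect the only real subtlety to be confirming that the linear-in-time test function $\varphi$ truly belongs to $\overline{\Phi}_+^{F(t_0,x_0(t_0))}\,u(t_0,x_0)$, i.e., that the touching condition $0=(\varphi-u)(t_0,x_0)\ge(\varphi-u)(t,x)$ holds for \emph{all} $x\in\mathcal{X}^{F(t_0,x_0(t_0))+B(0,\eps)}(t_0,x_0)$ and $t\in[t_0,t_0+\eps]$ for a single small $\eps$; this requires the lower bound on difference quotients to hold uniformly over the whole trajectory class for that $\eps$, which follows from the definition of $D^A_\uparrow$ but must be stated with care. Once membership in the test space is secured, the remainder is the short computation above, since the vanishing gradient $\partial_x\varphi\equiv 0$ trivializes the $A$- and $F$-dependent terms and isolates the contradiction in the time derivative alone.
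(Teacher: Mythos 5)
Your proposal is correct and follows essentially the same route as the paper: argue by contradiction from $D^A_\uparrow u(t_0,x_0)(F(t_0,x_0(t_0)))>0$, extract the quantitative bound $u(t_0+\delta,\tilde{x})-u(t_0,x_0)\ge\theta\delta$, and build an affine-in-time, $x$-independent test function in $\overline{\Phi}_+^{F(t_0,x_0(t_0))}u(t_0,x_0)$ that violates \eqref{E:Viscosity:Super} (the paper merely cites Theorem~6.8 of \cite{Keller24} for this construction, which you carry out explicitly). One small wording slip: a positive supremum of infima gives the lower bound for \emph{some} $\eps>0$ (and hence, by monotonicity of the trajectory classes, for all smaller $\eps$), not for \emph{every} $\eps>0$ as you first state; since your argument only uses a single small $\eps$, this does not affect its validity.
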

\begin{proof}
Fix $(t_0,x_0)\in\mathrm{dom}\,u$ with $t_0<T$. Put $E:=F(t_0,x_0(t_0))$.
Assume that   
$D^A_\uparrow\,u(t_0,x_0)\,(E)> 0$,
i.e., there  are $c>0$ and $\eps>0$ such that
\begin{align*}
u(t_0+\delta,x)-u(t_0,x_0)>c\cdot\delta
\end{align*}
for all $\delta\in (0,\eps]$
and all $x\in\mathcal{X}^{E+B(0,\eps)}(t_0,x_0)$.
Now, we can easily (just as in the proof of Theorem~6.8 in \cite{Keller24})
obtain a test function 
$\varphi\in\overline{\Phi}_+^E\,u(t_0,x_0)$ that does not satisfy \eqref{E:Viscosity:Super},
i.e., we have a contradiction. Thus our assumption is wrong, i.e.,
$D^A_\uparrow\,u(t_0,x_0)\,(E)\le 0$ holds. This concludes the proof.
\end{proof}

\begin{theorem}\label{T:QCsub:Visc}
Let $u$ be viscosity supersolution of \eqref{E:minusTVP} with $u(T,\cdot)\le h$. Then $u$
is an l.s.c.~quasi-contingent subsolution of \eqref{E:TVP}.
\end{theorem}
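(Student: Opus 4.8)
The plan is to verify the three defining properties of an l.s.c.~quasi-contingent subsolution (Definition~\ref{D:QC:sol}(ii)). Two of them are immediate: $u$ satisfies \eqref{u:LSC} because it is by hypothesis a viscosity supersolution of \eqref{E:minusTVP}, and $u(T,\cdot)\le h$ is assumed outright. The real content is the one-sided inequality \eqref{E:QC:Sub}, which I would establish by contradiction. Fix the data $(t_\ast,x_\ast)$, $t_0\in(t_\ast,T]$, and $x_0$ as in Definition~\ref{D:QC:sol}(ii) with $(t_0,x_0)\in\mathrm{dom}\,u$; since $x_0$ solves the inclusion issuing from $(t_\ast,x_\ast)$ it carries a selector $f^{x_0}$, so the viscosity supersolution property for \eqref{E:minusTVP} is applicable at $(t_0,x_0)$. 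Suppose, for contradiction, that $\varliminf_{\delta\downarrow0}\delta^{-1}[u(t_0-\delta,x_0)-u(t_0,x_0)]>0$, and pick a number $c$ strictly between $0$ and this $\varliminf$; then there is an $\eps_0>0$ with $u(t_0-\delta,x_0)>u(t_0,x_0)+c\delta$ for all $\delta\in(0,\eps_0]$.

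Next I would exhibit a single test function in $\overline{\Phi}_-\,u(t_0,x_0)$ that violates \eqref{E:Viscosity:SuperMinus}. The natural candidate is the spatially constant, affine-in-time function $\varphi(t,x):=u(t_0,x_0)+c\,(t_0-t)$ on $[0,t_0]\times C([0,T],H)$. Its path derivatives are $\partial_t\varphi\equiv-c$ and $\partial_x\varphi\equiv0$, and the fundamental-theorem-of-calculus identity defining $\mathcal{C}_V^{1,1}$ holds trivially since both sides reduce to $-c\,(t_2-t_1)$; hence $\varphi\in\mathcal{C}_V^{1,1}([0,t_0]\times C([0,T],H))$. Moreover $(\varphi-u)(t_0,x_0)=0$, and for $t=t_0-\delta$ with $\delta\in(0,\eps_0]$ the choice of $c$ gives $(\varphi-u)(t,x_0)=u(t_0,x_0)+c\delta-u(t_0-\delta,x_0)<0$. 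Thus $\varphi$ touches $u$ from below along the frozen path $x_0$ on $[t_0-\eps_0,t_0]$, i.e.\ $\varphi\in\overline{\Phi}_-\,u(t_0,x_0)$ with $\eps=\eps_0$.

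Finally I would substitute $\varphi$ into \eqref{E:Viscosity:SuperMinus}. Because $\partial_x\varphi\equiv0$, the integral term vanishes identically, irrespective of $A$ and of the selector $f^{x_0}$, so the left-hand side collapses to $\partial_t\varphi(t_0,x_0)=-c<0$, contradicting the viscosity supersolution inequality. This contradiction forces \eqref{E:QC:Sub}, which together with the first paragraph completes the proof. I expect the only delicate point to be the observation that the touching condition in $\overline{\Phi}_-\,u(t_0,x_0)$ is imposed solely along the fixed path $x_0$ (varying time backward), so that a test function depending on time alone is admissible; this is exactly what permits taking $\partial_x\varphi\equiv0$ and thereby removes the operator $A$ and the selector from \eqref{E:Viscosity:SuperMinus}, reducing it to a sign condition on the one-sided time derivative of $u$ along $x_0$. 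It is also worth noting, via the nonanticipativity of $u$, that the backward difference quotient in \eqref{E:QC:Sub} depends only on $x_0|_{[0,t_0]}$, so no issue arises from values of $x_0$ beyond $t_0$.
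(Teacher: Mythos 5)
Your proof is correct and follows essentially the same route as the paper: negate \eqref{E:QC:Sub} to get $c,\eps_0>0$ with $u(t_0-\delta,x_0)>u(t_0,x_0)+c\delta$, test with the spatially constant affine function $\varphi(t,x)=u(t_0,x_0)+c(t_0-t)$, and observe that $\partial_x\varphi\equiv 0$ kills the integral in \eqref{E:Viscosity:SuperMinus}, leaving $-c<0$, a contradiction. The additional details you supply (the explicit verification that $\varphi\in\overline{\Phi}_-\,u(t_0,x_0)$ and that the $\mathcal{C}_V^{1,1}$ identity holds trivially) are exactly the steps the paper leaves implicit.
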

\begin{proof}
Fix $(t_\ast,x_\ast)\in [0,T)\times C([0,T],H)$,  $t_0\in (t_\ast,T]$, and
$x_0\in\mathcal{X}^F(t_\ast,x_\ast)$ with selector $f^{x_0}\in F(\cdot,x(\cdot))$.
Suppose that $(t_0,x_0)\in\mathrm{dom}\,u$.
Assume that there are $c>0$ and $\eps>0$ such that
\begin{align*}
u(t_0-\delta,x_0)-u(t_0,x_0)>\delta\cdot c
\end{align*}
for all $\delta\in (0,\eps]$.
Define $\varphi: [0,t_0]\times C([0,T],H) \to\R$ by
\begin{align*}
\varphi(t,x):=u(t_0,x_0)+(t_0-t)\cdot c. 
\end{align*}
Then $\varphi\in \mathcal{C}_V^{1,1}([0,t_0]\times C([0,T],H))$ with
$\partial_x\varphi(t,x)=0$
and 
$\partial_t\varphi(t,x)=-c$, 
and thus $\varphi\in\overline{\Phi}_-\,u(t_0,x_0)$,
which contradicts \eqref{E:Viscosity:SuperMinus}. 
Hence our assumption is wrong and thus $u$ is an l.s.c.~quasi-contingent supersolution of \eqref{E:TVP}.
\end{proof}

The next result follows from Theorems~\ref{T:u:QC:comparison}, 
\ref{T:QCsuper:Visc}, and \ref{T:QCsub:Visc}.

\begin{theorem}\label{T:u:Visc:comparison}
If $u_-$ is 
a viscosity supersolution of \eqref{E:minusTVP} with $u_-(T,\cdot)\le h$ 
 and $u_+$ is 
  a viscosity supersolution of \eqref{E:plusTVP} with $u_+(T,\cdot)\ge h$, 
 then
 $u_-\le u_+$.
\end{theorem}

\subsubsection{Viscosity solutions: Existence and uniqueness}

\begin{theorem}\label{T:u:Visc:U}
The value function $v$ is the unique l.s.c.~viscosity solution of  
{\color{black} \eqref{E:TVP}.}
\end{theorem}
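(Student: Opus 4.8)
The plan is to prove existence and uniqueness separately, with essentially all of the work concentrated in the existence statement. Two ingredients are immediate: the regularity \eqref{u:LSC} for $v$ is exactly Theorem~\ref{T:v}(ii), and the terminal condition $v(T,\cdot)=h$ follows from \eqref{E:v} together with $\mathcal{X}^F(T,x_0)=\{x_0\}$. It then remains, for existence, to verify that $v$ is a viscosity supersolution of both \eqref{E:plusTVP} and \eqref{E:minusTVP}; for uniqueness I will invoke the comparison principle already established.

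For the supersolution property for \eqref{E:minusTVP} (the easier direction), I would fix the data $(t_\ast,x_\ast)$, $t_0\in(t_\ast,T]$, $x_0\in\mathcal{X}^F(t_\ast,x_\ast)$ with selector $f^{x_0}$, and $\varphi\in\overline{\Phi}_-\,v(t_0,x_0)$. Since $x_0\in\mathcal{X}^F(s,x_0)$ for $t_\ast\le s\le t_0$, the backward form of the dynamic programming principle \eqref{E:DPP2} (applied at base point $(s,x_0)$ with $s=t_0-\delta$) gives $v(t_0-\delta,x_0)\le v(t_0,x_0)$ for small $\delta>0$. Combined with the touching property $\varphi\le v$ near $(t_0,x_0)$ and $\varphi(t_0,x_0)=v(t_0,x_0)$, this yields $\varphi(t_0,x_0)-\varphi(t_0-\delta,x_0)\ge 0$. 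Expanding the left-hand side through the path-derivative identity for $\mathcal{C}_V^{1,1}$ functions (with $x_0^\prime(s)=-A(s,x_0(s))+f^{x_0}(s)$), dividing by $\delta$, and passing to $\varlimsup_{\delta\downarrow 0}$ produces \eqref{E:Viscosity:SuperMinus}, using that $\tfrac{1}{\delta}\int_{t_0-\delta}^{t_0}\partial_t\varphi(s,x_0)\,ds\to\partial_t\varphi(t_0,x_0)$.

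The supersolution property for \eqref{E:plusTVP} is the main obstacle. Fix $(t_0,x_0)\in\mathrm{dom}\,v$ with $t_0<T$, set $E:=F(t_0,x_0(t_0))$, and take $\varphi\in\overline{\Phi}_+^E\,v(t_0,x_0)$ with corresponding $\eps>0$. Here I would use \eqref{E:DPP1} to obtain an \emph{optimal} trajectory $x^*\in\mathcal{X}^F(t_0,x_0)$ with $v(t,x^*)=v(t_0,x_0)$ for every $t\in[t_0,T]$. By upper semicontinuity of $F$, its selector satisfies $f^{x^*}(s)\in E+B(0,\eps)$ for $s$ near $t_0$, so after modifying $x^*$ beyond a small time I obtain an admissible $x\in\mathcal{X}^{E+B(0,\eps)}(t_0,x_0)$ agreeing with $x^*$ near $t_0$; this $x$ serves as the required witness. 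The decisive point is that optimality makes $v(t_0+\delta,x)-v(t_0,x_0)$ vanish \emph{identically}, so the touching inequality forces $\varphi(t_0+\delta,x)\le\varphi(t_0,x_0)$ for all small $\delta$, not merely along a subsequence. Inserting this into the path-derivative identity gives, for all sufficiently small $\delta$, $\tfrac{1}{\delta}\int_{t_0}^{t_0+\delta}\langle A(s,x(s)),\partial_x\varphi(s,x)\rangle\,ds\ge\tfrac{1}{\delta}\int_{t_0}^{t_0+\delta}\partial_t\varphi(s,x)\,ds+\tfrac{1}{\delta}\int_{t_0}^{t_0+\delta}(f^{x^*}(s),\partial_x\varphi(s,x))\,ds$. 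Since $f^{x^*}(s)\in F(s,x^*(s))\subset E+B(0,\eps')$ near $t_0$ by upper semicontinuity and $F$ has bounded values, the $\varliminf_{\delta\downarrow 0}$ of the selector average is bounded below by $\inf_{f\in E}(f,\partial_x\varphi(t_0,x_0))$ after letting $\eps'\downarrow 0$; taking $\varliminf_{\delta\downarrow 0}$ and using subadditivity of $\varliminf$ then yields \eqref{E:Viscosity:Super}. The crux is exactly this separation of the $A$-pairing term from the selector term inside the $\varliminf$, which is what forces the use of an optimal rather than merely near-optimal trajectory.

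With existence in hand, uniqueness is immediate from the comparison principle. If $u_1$ and $u_2$ are two l.s.c.~viscosity solutions of \eqref{E:TVP}, then each is simultaneously a viscosity supersolution of \eqref{E:plusTVP} and of \eqref{E:minusTVP} with terminal value equal to $h$. Applying Theorem~\ref{T:u:Visc:comparison} once with $(u_-,u_+)=(u_1,u_2)$ and once with $(u_-,u_+)=(u_2,u_1)$ gives $u_1\le u_2$ and $u_2\le u_1$, hence $u_1=u_2$; since $v$ is such a solution, it is the unique one. Equivalently, one may reduce to the quasi-contingent comparison: Theorems~\ref{T:QCsuper:Visc} and \ref{T:QCsub:Visc} convert any l.s.c.~viscosity solution into a quasi-contingent solution, and Theorem~\ref{T:u:QC:comparison}, compared against the quasi-contingent solution $v$ supplied by Theorem~\ref{T:value:contingent}, forces equality.
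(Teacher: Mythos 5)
Your proposal is correct and follows essentially the same route as the paper: regularity from Theorem~\ref{T:v}, the \eqref{E:plusTVP} supersolution property via the optimal trajectory from \eqref{E:DPP1} combined with the selector decomposition $f=\tilde b_n+\tilde p_n$ (upper semicontinuity of $F$, as in the proof of Theorem~\ref{T:Necessary2}) and the path-derivative expansion, the \eqref{E:minusTVP} property via \eqref{E:DPP2}, and uniqueness from Theorem~\ref{T:u:Visc:comparison}. No substantive differences.
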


\begin{proof}
(i) \emph{Regularity}: By Theorem~\ref{T:v}, $v$ satisfies \eqref{u:LSC}. 

(ii) \emph{Viscosity supersolution property for \eqref{E:plusTVP}}: 
 Let
$(t_0,x_0)\in [0,T)\times C([0,T],H)$  with $(t_0,x_0)\in\mathrm{dom}\,v$ and 
$\varphi\in\overline{\Phi}_+^{F(t_0,x_0)}\,v(t_0,x_0)$ with corresponding 
$\eps>0$.
By \eqref{E:DPP1}, there is an $x\in\mathcal{X}^F(t_0,x_0)$ such that
$v(t,x)=v(t_0,x_0)$ for all $t\in [t_0,T]$.
From the proof of Theorem~\ref{T:Necessary2}, we can deduce that there are $\delta_0>0$
and $\tilde{x}\in\mathcal{X}^{F(t_0,x_0(t_0))+B(0,\eps)}(t_0,x_0)$ 
with corresponding selector 
$f^{\tilde{x}}=\tilde{b}+\tilde{p}$ a.e.~on $(t_0,t_0+\delta_0)$ for  some $\tilde{b}\in F(t_0,x(t_0))_{L^2}$ and $\tilde{p}\in  B(0,\eps)_{L^2}$ such that
that $x=\tilde{x}$ on $[0,t_0+\delta_0]$. Thus, thanks to  the upper semicontinuity of $F$, for every $n\in\N$, there are
 $\delta_n\in (0,\delta_0)$, $\tilde{b}_n\in F(t_0,x(t_0))_{L^2}$ and $\tilde{p}_n\in  B(0,1/n)_{L^2}$ such that, for every $\delta\in (0,\delta_n)$,  we have
\begin{align*}
0&=v(t_0+\delta,x)-v(t_0,x_0)=v(t_0+\delta,\tilde{x})-v(t_0,x_0)\\ 
&\ge \varphi(t_0+\delta,\tilde{x})-\varphi(t_0,x_0)\\
&\ge\int_{t_0}^{t_0+\delta} \partial_t\varphi(t,x)-\langle A(t,x(t)),\partial_x\varphi(t,x)\rangle+
 (\tilde{b}_n(t)+\tilde{p}_n(t),\partial_x\varphi(t,x))\,dt\\
&\ge\int_{t_0}^{t_0+\delta} \partial_t\varphi(t,x)-\langle A(t,x(t)),\partial_x\varphi(t,x)\rangle+
(\tilde{b}_n(t),\partial_x\varphi(t,x)-\partial_x\varphi(t_0,x_0))\\ &\qquad\qquad+
\inf_{f\in F(t_0,x(t_0))}(f,\partial_x\varphi(t_0,x_0))+(\tilde{p}_n(t),\partial_x\varphi(t,x))\,dt.
\end{align*}
Dividing by $\delta$ and applying $\varliminf_{\delta\downarrow 0}$ yields
\begin{align*}
&\partial_t \varphi(t_0,x_0)+\varliminf_{\delta\downarrow 0}\frac{1}{\delta}
\int_{t_0}^{t_0+\delta} \langle -A(t,x(t)),\partial_x\varphi(t,x)\rangle\,dt
+\inf_{f\in F(t_0,x_0(t_0))} (f,\partial_x\varphi(t_0,x_0))\\
&\le 
-\varliminf_{\delta\downarrow 0}\frac{1}{\delta}
\int_{t_0}^{t_0+\delta}[(\tilde{p}_n(t),\partial_x\varphi(t,x))
+
 (\tilde{b}_n(t),\partial_x\varphi(t,x)-\partial_x\varphi(t_0,x_0))]\,dt\\
&
\le \frac{1}{n}\cdot \sup_{t\in [t_0,T]} \abs{\partial_x\varphi(t,x)}+\abs{F(t_0,x_0)}\cdot\varlimsup_{\delta\downarrow 0} \frac{1}{\delta}\int_{t_0}^{t_0+\delta}  
\abs{\partial_x\varphi(t,x)-\partial_x\varphi(t_0,x_0))}\,dt\\
&=\frac{1}{n}\cdot \sup_{t\in [t_0,T]} \abs{\partial_x\varphi(t,x)}
\end{align*}
thanks to the superadditivitiy of $\varliminf$ and the boundedness of $F(t_0,x_0)$.
Since $n\in\N$ was arbitrary, 
we have
\eqref{E:Viscosity:Super}.

(iii) \emph{Viscosity supersolution property for \eqref{E:minusTVP}}:
Let $(t_\ast,x_\ast)\in [0,T)\times C([0,T],H)$,
$t_0\in (0,T]$, $x_0\in\mathcal{X}^F(t_\ast,x_\ast)$ with $(t_0,x_0)\in\mathrm{dom}\,v$,
and $\varphi\in\overline{\Phi}_-(t_0,x_0)$ with corresponding number $\eps>0$.
Noting that $x_0\in\mathcal{X}^F(t,x_0)$ for every $t\in [t_\ast,T]$, we can deduce with 
 \eqref{E:DPP2} that, for every $\delta\in (0,\eps]$ with $t_0-\delta\ge t_\ast$, we have
\begin{align*}
0\ge v(t_0-\delta,x_0)-v(t_0,x_0)\ge \varphi(t_0-\delta,x_0)-\varphi(t_0,x_0).
\end{align*}
Thus
\begin{align*}
0&\le\varlimsup_{\delta\downarrow 0}\frac{1}{\delta}\int_{t_0-\delta}^{t_0} \partial_t\varphi(t,x_0)-\langle A(t,x_0(t))\,\partial_x\varphi(t,x_0)\rangle
+(f^{x_0}(t),\partial_x\varphi(t,x_0))\,dt\\
&\le \partial_t \varphi(t_0,x_0)+\varlimsup_{\delta\downarrow 0}\frac{1}{\delta}
\int_{t_0-\delta}^{t_0} \langle -A(t,x_0(t)),\partial_x\varphi(t,x_0)\rangle+(f^{x_0}(t),\partial_x\varphi(t,x_0))\,dt.
\end{align*}

(iv) Uniqueness follows from Theorem~\ref{T:u:Visc:comparison}.
This concludes the proof.
\end{proof}

\subsection{{\color{black} An example}}
{\color{black} We  consider a distributed control  problem for the heat equation
with l.s.c.~terminal cost. To this end, we fix a bounded domain $G$ in $\R^n$
with smooth boundary $\partial G$,  a constant $C_P>0$, and the set 
\begin{align*}
P:=\{z\in L^2(G):\, \norm{z}_{L^2(G)}\le C_P\},
\end{align*}
which will be used as control set.

Given $(t_0,x_0)\in [0,T)\times C([0,T],L^2(G))$ and 
a Borel-measurable function $a:[0,T]\times\R^n\to\R$ with
 $\int_G \abs{a(t,\xi)}^2\,d\xi\le (C_P)^2$ for all $t\in [0,T]$,
consider the Cauchy--Dirichlet problem 
 \begin{equation}\label{E:ContrHeat2}
\begin{split}
x_t(t,\xi)- \Delta_\xi x (t,\xi)&=a(t,\xi),\quad (t,\xi)\in (t_0,T)\times G,\\
x(t,\xi)&=0,\quad (t,\xi) \in (t_0,T]\times \partial G,\\
x(t,\xi)&=x_0(t), \quad  (t, \xi)\in   [0,t_0]\times G.
\end{split}
\end{equation}
As in Example~1.3 in \cite{BK18JFA}
(see also Chapter~23 in \cite{ZeidlerIIA} for a detailed treatment)  we can formulate \eqref{E:ContrHeat2} as
an  abstract evolution equation on $(V,H,V^\ast)$ of the form
\begin{align}\label{E:Abstract2}
x^\prime(t)+Ax(t)&=\mathbf{a}(t)\text{ a.e.~on $(t_0,T)$ with $x=x_0$ on $[0,t_0]$,}
\end{align}
where $H=L^2(G)$,  $V=H^1_0(G)$, and $(t,y)\mapsto Ay$, 
$[0,T]\times V\to V^\ast$, satisfies \textbf{H}($A$). 
Here, $A$ corresponds to $-\Delta_\xi$ and
 $\mathbf{a}:[0,T]\to  P$ 
 to the function $a=a(t,\xi)$ above. 
We write $x^{t_0,x_0,\mathbf{a}}$  for the solution of  \eqref{E:Abstract2} in $C([0,T],L^2(G))\cap W_{pq}(t_0,T)$.
Note that here  $p=2$. 
Existence as well as uniqueness of our solution  $x^{t_0,x_0,\mathbf{a}}$ are standard
results (see, e.g., \cite{ZeidlerIIA}).

Next, we specify the remaining data for our control problem.
As terminal cost, we use  $h:C([0,T],L^2(G))\to\R\cup\{+\infty\}$ defined by
\begin{align*}
h(x)=\begin{cases}
0&\text{if $x(t)\in K$ for all $t\in [0,T]$,}\\
+\infty &\text{otherwise,}
\end{cases},
\end{align*}
where 
\begin{align*}
K=\{z\in L^2(G):\,  \norm{z}_{L^2(G)}\le C_K\}
\end{align*}
 and  $C_K>$ is a constant. Note that $h$ is l.s.c.
As class of admissible controls, we use  the set of all Borel-measurable functions from $[0,T]$ to  $P$,
which we denote by $\mathcal{A}$. 

Given $(t_0,x_0)\in [0,T)\times C([0,T],L^2(G))$, our optimal control problem
is to find a  control 
$\tilde{\mathbf{a}}\in\mathcal{A}$
such that
\begin{align*}
h(x^{t_0,x_0,\tilde{\mathbf{a}}})=\inf \{h(x^{t_0,x_0,\mathbf{a}}):\, \mathbf{a}\in\mathcal{A}\}.
\end{align*}
The  corresponding  value function $v:[0,T]\times C([0,T],L^2(G))\to\R\cup\{+\infty\}$ is defined by
\begin{align*}
v(t_0,x_0):=\inf\{h(x^{t_0,x_0,\mathbf{a}}):\, \mathbf{a}\in\mathcal{A}\}=
\inf\{h(x):\, x\in\mathcal{X}^F(t_0,x_0)\},
\end{align*}
where $F(t,z)=P$ for all $(t,z)\in [0,T]\times L^2(G)$.

Note that $v$ is not continuous. In particular, $v(t_0,x_0)=+\infty$ whenever $x_0(s)\not\in K$ for some $s\in [0,t_0]$ and
$v(t_0,x_0)=0$ whenever $x_0(t)=0$ for all $t\in [0,t_0]$. Nevertheless, $v$ can be characterized as a unique 
nonsmooth solution of the corresponding Hamilton--Jacobi--Bellman equation.

Indeed, by Theorem~\ref{T:u:Visc:U},
$v$ is the unique l.s.c.~viscosity solution of
\begin{align*}
\begin{split}
&-\partial_t u+\langle Ax(t),\partial_x u\rangle -\inf_{\mathbf{p}\in P} (\mathbf{p},\partial_x u)=0\text{ in $[0,T)\times C([0,T],L^2(G))$},\\
&u(T,x)=h(x)\text{ on $C([0,T],L^2(G))$.}
\end{split}
\end{align*}}

\bibliographystyle{amsplain}
\bibliography{JK24}
\end{document}